\documentclass[12pt]{article}
\usepackage[T1]{fontenc}
\usepackage[utf8]{inputenc}
\usepackage[english]{babel}

\usepackage{xcolor}
\usepackage[bottom=2cm,top=2cm,left=2cm,right=2cm]{geometry}
\usepackage{amssymb, amssymb, amsmath, amsfonts, mathrsfs, amsthm}

\usepackage{fancybox}

\usepackage{mathpazo}
\numberwithin{equation}{section}

\newcommand{\X}{\mathsf{X}}

\theoremstyle{plain}
\newtheorem{theorem}{Theorem}[section]

\newtheorem{lemma}[theorem]{Lemma}
\newtheorem{definition}[theorem]{Definition}

\newtheorem{remark}{Remark}[section]

\newtheorem{step}{Step}[theorem] 
 	
\renewenvironment{proof}{$\mathbf{Proof}$.}{\begin{flushright}\qedsymbol\end{flushright}}
\title{ {Approximate Controllability
of Nonlocal  Stochastic Integrodifferential System in Hilbert Spaces}}
\author{Mamadou Pathe LY$^{a}$ \footnote{E-mail address: moha.pathe.ly@gmail.com},
	\:Ravikumar Kasinathan$^{b}$ \footnote{E-mail address : ravikumarkpsg@gmail.com},
    \:Ramkumar Kasinathan$^{b}$\footnote{E-mail address: ramkumarkpsg@gmail.com},\\
    \:Dimplekumar Chalishajar$^{c,d}$\footnote{E-mail address: chalishajardn@vmi.edu(Corresponding author)},
	\:Mamadou Abdoul Diop$^{a,d}$\footnote{E-mail address:  mamadou-abdoul.diop@ugb.edu.sn},
}
\begin{document}
	\renewenvironment{proof}{$\mathbf{Proof}$.}{\begin{flushright}\qedsymbol\end{flushright}}
	\newenvironment{prob}{$\mathbf{Problem}$.}{\emph{\begin{flushright}\qedsymbol\end{flushright}}}
	\newenvironment{problem}[1]{\par\noindent\underline{Problem:}\space#1}{}
	\providecommand{\keywords}[1]{\textbf{\textit{Keywords:}} #1}
	\providecommand{\classification}[1]{\textbf{\textit{Mathematcal Subject Classification:}} #1}
	%%%%%%%%%%%%%%%%%%%%%%%%%%%%%%%%%%%%%%%%%%%%%%%%%%%%%%%%%%%%%%%%%%%%%%%%%
	% \renewenvironment{proof}{$\mathbf{Proof}$.}{\begin{flushright}\qedsymbol\end{flushright}}
	\selectlanguage{english}
	\maketitle
	{\footnotesize
		\begin{flushleft}			
			\thanks{$^a$ Universit\'e Gaston Berger de Saint-Louis, \,UFR SAT D\'epartement de Math\'ematiques, B.P234,\,Saint-Louis, S\'en\'egal.}\\
			\thanks{$^b$ Department of Mathematics, PSG College of Arts \& Science, Coimbatore, 641 014, India.}\\
		  \thanks{$^c$ Department of Applied Mathematics, Mallory Hall, Virginia Military Institute (VMI), Lexington, VA 24450, USA.}\\
           % \thanks{$^b$ Universidade Technológica Fedral do Paraná, UTFPR}\\
			\thanks{$^d$ UMMISCO UMI 209 IRD/UPMC, Bondy, France.}
	\end{flushleft}}
	%%%%%%%%%%%%%%%%%%%%%%%%%%%%%%%%%%%%%%%%%%%%%%%%%%%%%%%%%%%%%%%%%%%%%%%%%
	% \maketitle	
	\begin{abstract}
	\noindent This project  investigates the approximate controllability of a class of stochastic integrodifferential equations in Hilbert space with non-local beginning conditions. In a departure from the conventional concerns expressed in the literature, we will not consider compactness or the Lipschitz criteria concerning the nonlocal term. \textcolor{blue}{We use the fact that the resolvent operator is compact}. We first prove the controllability of the nonlinear system using Schauder's fixed point theorem, a method known for its robustness; as well, we also use Grimmer's resolvent operator theory. Subsequently, we employ the reliable approximation methods and the powerful diagonal argument to determine the approximate controllability of the stochastic system. To conclude, we present an example that validates our theoretical statement.
    %In this work, we study the approximate controllability of a class of stochastic integrodifferential equations in Hilbert space with non-local initial conditions. We do not consider compactness nor the Lipschitz criteria on the non-local term, contrary to the usual concerns found in the literature. In addition to Grimmer's resolvent operator theory, we first establish, by means of Shauder's fixed point theorem, the controllability of the nonlinear system. Secondly, we analyze the approximate controllability of the stochastic system using approximation approaches and the diagonal argument. We provide an example to demonstrate the abstract theory towards the conclusion of our work.
	\end{abstract}	
    \keywords{Approximate controllability, Integrodifferential equations, Nonlocal conditions, Resolvent operator.}\\
	\classification{		
	%26A42,
	%34A38,
        34K50,
        45D05,
        34H05,
        37N35
.}	
\section{Introduction}
%\par Based on the work of \cite{sakthivel2013approximate},
This paper seeks to provide the necessary criteria for the approximate controllability of abstract nonlinear integrodifferential equations with nonlocal initial conditions powered by a cylindrical Wiener process of the following form: 
 \begin{equation}\label{sys}
	\begin{cases}
\vartheta^\prime(\sigma)=A\vartheta(\sigma)+\displaystyle\int_0^\sigma \Pi(\sigma-s)\vartheta(s)\mathrm{d}s  + Cu(\sigma) + f(\sigma, \vartheta(\sigma))+ \textcolor{blue}{g(\sigma, \vartheta(\sigma))\mathrm{d}\mathsf{W}(\sigma), \; \sigma}\in J=[0,c]\\ 
	\vartheta(0) =  h(\vartheta), 
	\end{cases}
\end{equation}
where the state $\vartheta(\cdot)$ takes values in the Hilbert space $\mathbb{H}$, endowed with inner product $\langle\cdot, \cdot\rangle$ and norm $\|\cdot\|$; the operator $A:\mathcal{D}(A)\subset\mathbb{H}\to\mathbb{H}$ is the infinitesimal generator of a strongly continuous semigroup $(T(\sigma))_{\sigma\geq 0}$ in $\mathbb{H}$; for $\sigma\geq 0, \Pi(\sigma)$ is a closed linear operator with domain $\mathcal{D}(\Pi(\sigma))\supset \mathcal{D}(A)$ independent of $\sigma$. The nonlocal function $h$ is defined by 
$$
    h(\vartheta) = \int_0^c\zeta(s,\vartheta(s))\mathrm{d}s.
$$ 
Consider $\mathbb{Y}$ to be an additional separable Hilbert space powered by the same scalar product and norm as $\mathbb{H}$ (without any confusion), the set $\{\mathsf{W}(\sigma): \sigma\geq 0\}$ is a given $\mathbb{Y}$-valued Wiener process with a finite trace nuclear covariance operator $\mathcal{Q}\geq 0$ defined on a complete filtered probability space $(\Omega, \mathcal{F}, \{\mathcal{F}_\sigma\}_{\sigma\geq 0}, \mathbb{P})$, with $ \{\mathcal{F}_\sigma\}_{\sigma\geq 0}$ generated by the Wiener process $\mathsf{W}$. The control functions $u(\cdot)$ belong to the Banach space of admissible control functions space $L_\mathcal{F}^2(J, \mathbb{Y})$, and $C:\mathbb{Y}\to\mathbb{H}$ is a bounded linear operator. Later, the nonlinear functions $f, g$, and $\zeta$ will be defined.

Stochastic differential equations are used in fields such as engineering, economics, biology, electricity, etc., to model the noise effects that occur when studying the phenomena that appear in these fields (see \cite{mao2007stochastic,sakthivel2013existence, zhang2019mild}, Da Prato and J. Zabesczyk book \cite{da2014stochastic} and references therein). The study of the qualitative properties like existence, stability, optimal control, and controllability, among others, of these kinds of systems, has attracted great interest in the researcher and engineers community; for instance, we refer to \cite{dauer2004controllability} and \cite{farahi2014approximate} as references. 

On the other hand, controllability is an important concept in control theory, both in deterministic and stochastic cases. In infinite dimension, we are always tempted to achieve exact controllability, i.e., to be able to steer a system between two arbitrary points $v_0, v_1$ in state space, as we are able to do in finite dimension. However, this is rarely satisfied when the semigroup associated with a linear system is compact since the controllability operator is asked to be surjective, see \cite{curtain2012introduction, triggiani1977note} to learn more about this. Therefore, we need to switch from exact controllability to a weaker one, which is approximate controllability. In recent years, much work has been done in the study of approximate controllability of fractional differential systems in the deterministic and stochastic cases; see \textcolor{blue}{\cite{mahmudov2003approximate, sakthivel2011approximate, sakthivel2012approximate, sivasankar2022, gokul2024}}. Sakthivel et al.\cite{sakthivel2011approximate} established the approximate controllability by assuming that the $C_0$-semigroup is compact and the nonlinear function is continuous and uniformly bounded. \textcolor{blue}{More recently, K. Nandhaprasadh, R. Udhayakumar \cite{Nandhaprasadh2024}, discussed the approximate  boundary controllability of Hilfer  fractional neutral stochastic differential inclusions with  fractional Brownian motion (fBm) and Clarke’s subdifferential in  Hilbert space, and examine whether mild solutions to a fractional stochastic evolution system with a fractional Caputo derivative on an infinite interval exist and are attractive, and in \cite{sivasankar2025}, S. Sivasankar et al., examine whether mild solutions to a fractional stochastic evolution system with a fractional Caputo derivative on an infinite interval exist and are attractive}. However, little has been done on the study of stochastic integrodifferential systems by means of the Grimmer resolvent operator properties. Diop et al.\cite{diop2022optimal} studied the existence of optimal controls for some impulsive stochastic integrodifferential equations with state-dependent delay.

Moreover, the nonlocal property represents one of the most important tools to consider when using integrodifferential in the applications we have described above. Indeed, nonlocal initial conditions mean that the current state of a system depends not only on its previous state but also on all its previous states. It also means that, contrary to the abstract differential equation, where the initial condition is taken to be $\vartheta(0)=\vartheta_0$, in order to check the state of nonlocal equations at a given point, information about the values of the system far from that point is needed; one can see \cite{deng1993exponential} for more explanation about the importance of the nonlocal initial condition property. Differential equations with the nonlocal property have been investigated by many authors \cite{aizicovici1997functional, deng1993exponential}, and in the paper of Byszewski \cite{byszewski1991theorems}, the study of differential equations by taking into account the nonlocal property has been introduced for the first time. But more often, the restrictions, such as compactness or the Lipschitz condition, that are imposed on the nonlocal term $h$ in the literature are too strong and are not easily satisfied in practical applications. To relax these conditions on $h$, Liang et al. \cite{liang2004nonlocal} introduced the following assumption:
\begin{align*}
\begin{array}{c c l}
    \textbf{(H0)} \mbox{ For any } x, y\in C([0, c]; X), \mbox{ there exist a constant } \beta\in (0, c) \mbox{ such that }\\ x(\sigma)=y(\sigma)(\sigma\in [\beta, c]) \mbox{ implies } h(x)=h(y)  
\end{array}
\end{align*}
This means that the values of the solution $\vartheta(\sigma)$ when $\sigma$ takes zero do not affect $h$. In this work, contrary to what is supposed in the above hypothesis \textbf{(H0)}, we consider that the nonlocal term $h(\vartheta)$ depends on all values of the solution $\vartheta(\cdot)$ on the whole interval $[0, c]$. In  \cite{ding2020approximat}, Yonghong Ding and Yongxiang Li study the approximate controllability of fractional stochastic evolution equations with nonlocal conditions. Therefore, based on this work, \cite{ding2020approximat}, with the help of the compactness of the resolvent operator $\Re(\sigma)$, the techniques of stochastic analysis, approximation technique, diagonal argument, and Schauder's fixed point theorem, to establish the approximate controllability results, considering weaker assumptions like continuity of $h$.

The main contributions of this paper are summarized in the following points:
\begin{itemize}
    \item[$\bullet$] A new result on the approximate controllability of nonlocal stochastic integrodifferential equations in Hilbert space is obtained.
    \item[$\bullet$] We relax the compactness conditions on the nonlocal term and use the Grimmer resolvent operator and the fixed point theory to establish the existence of a mild solution for system \eqref{sys}.
    \item[$\bullet$] Finally, we use approximation techniques and the diagonal argument to prove the approximate controllability of our system \eqref{sys}. 
\end{itemize}
\noindent The rest of this work is organized as follows. In Section \ref{sec1}, we recall some notions and basic known facts that will help us establish our results. In the section \ref{sec2}, we define the notion of a mild solution and give sufficient conditions that ensure its existence. In Section \ref{sec3}, we establish the approximate controllability of the system (\ref{sys}). In the section \ref{sec4}, we give an example to illustrate the feasibility of our obtained results. 
\section{Preliminaries results}\label{sec1}
In this section, we present the well-known essential facts, basic definitions, lemmas, and preliminary results that are used throughout this paper.
\subsection{Wiener Process}
All through this work we consider $(\Omega, \mathcal{F}, \left\{\mathcal{F}_\sigma\right\}_{0\le \sigma\le c}, \mathbb{P})$ with $c>0$ an arbitrarily fixed time horizon, as a filtered complete probability space satisfying the usual conditions, which means that the filtration is a right continuous increasing family and $\mathcal{F}_0$ contains all $\mathbb{P}$-null sets of the filtration $\mathcal{F}$. We also consider $\mathbb{H}$ and $\mathbb{Y}$ as two real separable Hilbert spaces, with $\langle\cdot, \cdot\rangle_\mathbb{H},\langle\cdot, \cdot\rangle_\mathbb{Y}$ their inner product and $\|\cdot\|_\mathbb{H}, \|\cdot\|_\mathbb{Y}$ their corresponding vector norms, respectively. Let $\left\{e_k\right\}_{k\in\mathbb{N}}$ be a complete orthonormal basis of $\mathbb{Y}$ consisting of eigenvectors of $\mathcal{Q}$ corresponding to the eigenvalues $\lambda_k, k\in\mathbb{N}$. Suppose that the set $\left\{\mathsf{W}(\sigma):\sigma\geq 0\right\}$ is a cylindrical $\mathbb{Y}-$valued Brownian motion or Wiener process defined on the filtered probability space $(\Omega, \mathcal{F}, \left\{\mathcal{F}_\sigma\right\}_{\sigma\geq 0}, \mathbb{P})$ with finite trace nuclear covariance operator $\mathcal{Q}\geq 0$ denoted by: $$Tr(\mathcal{Q})=\sum_{k=1}^\infty\lambda_k = \lambda< + \infty,$$ which satisfies that $\mathcal{Q} e_k = \lambda_k e_k, k\in\mathbb{N}$. Let $\left\{\gamma_k(\sigma), k\in\mathbb{N}\right\}$ be a sequence of one-dimensional standard Wiener processes mutually independent on $(\Omega, \mathcal{F}, \left\{\mathcal{A}\right\}_{\sigma\geq 0}, \mathbb{P})$ such that for $\sigma\geq 0$, 
\begin{equation}\label{wieserie}
\mathsf{W}(\sigma) = \sum_{k=1}^\infty\sqrt{\lambda_k}\gamma_k(\sigma)e_k, 
\end{equation}
where 
$$
\gamma_k(\sigma) = \frac{1}{\sqrt{\lambda_k}}\langle\mathsf{W}(\sigma), e_k\rangle, k\in\mathbb{N},
$$
represents a pair of independent real-valued Brownian motions on $(\Omega, \mathcal{F}, \left\{\mathcal{F}_\sigma\right\}_{\sigma\geq 0}, \mathbb{P})$(see \cite{da2014stochastic} for more details on the series \eqref{wieserie}).\\
%Furthermore, we assume that $\mathcal{F}_t =\sigma\left\{W(s), 0\leq s\leq t\right\}$ is the $\sigma-$algebra generated by $W$. \\
Furthermore, we review the meaning of the $\mathbb{H}$-valued stochastic integral in relation to the $\mathbb{Y}$-valued $\mathcal{Q}$-Wiener process $\mathsf{W}$. Let $L_2^0 = L_2(\mathcal{Q}^{\frac{1}{2}}(\mathbb{Y}), \mathbb{H})$ represent the space of all Hilbert-Schmidt operators from $\mathcal{Q}^{\frac{1}{2}}Y$ into $\mathbb{H}$ with the norm $$\|\phi\|_{L_2^0}^2 = Tr((\phi \mathcal{Q}^{1/2})(\phi\mathcal{Q}^{1/2})^*),$$ 
where $\phi^*$ is the adjoint of the operator $\phi\in L_2^0$. We observe that, here, this norm can be reduced to 
$$\|\phi\|_{L_2^0}^2 = Tr(\phi \mathcal{Q}\phi^*)=\sum_{k=1}^\infty\|\sqrt{\gamma_k}\phi_k\|^2.
$$ 
%The collection of all $\mathcal{F}_b-$measurable, square integrable $\mathbb{H}$-valued random variables denoted $L^2(\Omega,\mathbb{H})$, is a Banach space equipped with the norm $||v||_{L^2(\Omega,\mathbb{H})}=(\mathbb{E}||v(t)||)^{1/2}$, where $\mathbb{E}$ denotes the expectation with respect to the measure $\mathbb{P}$. 

\noindent Let $\mathcal{C}(J, L^2(\Omega,\mathbb{H}))$ represent the Banach space of all continuous mappings from $J$ to $L^2(\Omega,\mathbb{H})$ that fulfill $\sup_{\sigma\in J}(\mathbb{E}||\vartheta(\sigma)||^2) <\infty$. We refer to $\mathcal{M}(J,L^2(\Omega,\mathbb{H}))$ as the space of all $\mathcal{F}_\sigma$-adapted measurable processes $\vartheta\in\mathcal{C}(J, L^2(\Omega,\mathbb{H}))$ endowed with the norm $||\vartheta||_{\mathcal{M}}=(\mathbb{E}||\vartheta(\sigma)||)^{1/2}$.\\

\noindent The result that follows will be used to calculate the system's stochastic integral.
\begin{lemma}\cite{curtain1971stochastic}\label{lemma21}
    If $g:J\times\mathbb{H}\to L(Y, \mathbb{H})$ is  continuous function and $\vartheta\in\mathcal{C}(J, L^2(\Omega, \mathbb{H}))$, then 
    $$
    \mathbb{E}\|\int_J g(\sigma, \vartheta(\sigma))\mathrm{d}\mathsf{W}(\sigma)\|^2\leq Tr(\mathcal{Q})\int_J\mathbb{E}\|g(\sigma,\vartheta(\sigma))\|^2\mathrm{d}\sigma.
    $$
\end{lemma}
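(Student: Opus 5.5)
The bound is the Itô isometry followed by a trace estimate. Write $\Phi(\sigma)=g(\sigma,\vartheta(\sigma))$. It is an $\mathcal{F}_\sigma$-adapted process with values in $L(\mathbb{Y},\mathbb{H})$, because $\vartheta$ is adapted and $g$ is continuous. It is also mean-square integrable: continuity of $g$ together with $\vartheta\in\mathcal{C}(J,L^2(\Omega,\mathbb{H}))$ makes $\sigma\mapsto\Phi(\sigma)$ measurable. If $\int_J\mathbb{E}\|\Phi\|^2\,\mathrm{d}\sigma=\infty$ there is nothing to prove, so we may assume it is finite. The argument has three steps.

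\textbf{Step 1 (simple integrands).} First take $\Phi$ elementary, $\Phi(\sigma)=\sum_j \Phi_j\mathbf{1}_{(t_j,t_{j+1}]}(\sigma)$ with each $\Phi_j$ an $\mathcal{F}_{t_j}$-measurable $L(\mathbb{Y},\mathbb{H})$-valued random variable taking finitely many values. Then
\[
\int_J\Phi\,\mathrm{d}\mathsf{W}=\sum_j\Phi_j\bigl(\mathsf{W}(t_{j+1})-\mathsf{W}(t_j)\bigr).
\]
Expanding the square gives cross terms with $j<k$. Conditioning on $\mathcal{F}_{t_k}$ and using that $\mathsf{W}(t_{k+1})-\mathsf{W}(t_k)$ is independent of $\mathcal{F}_{t_k}$ with mean zero, these cross terms vanish. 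For the diagonal terms, the series \eqref{wieserie} and the independence of the $\gamma_k$ give
\[
\mathbb{E}\|\Phi_j\Delta\mathsf{W}_j\|^2=(t_{j+1}-t_j)\,\mathbb{E}\sum_k\lambda_k\|\Phi_je_k\|^2=(t_{j+1}-t_j)\,\mathbb{E}\|\Phi_j\|_{L_2^0}^2 .
\]
This is the Itô isometry $\mathbb{E}\|\int_J\Phi\,\mathrm{d}\mathsf{W}\|^2=\int_J\mathbb{E}\|\Phi\|_{L_2^0}^2\,\mathrm{d}\sigma$.

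\textbf{Step 2 (trace bound).} For any $\phi\in L(\mathbb{Y},\mathbb{H})$ we have $\|\phi e_k\|\le\|\phi\|$, so
\[
\|\phi\|_{L_2^0}^2=\sum_k\lambda_k\|\phi e_k\|^2\le\|\phi\|^2\sum_k\lambda_k=Tr(\mathcal{Q})\|\phi\|^2 .
\]
Combined with Step 1, this proves the inequality for elementary integrands.

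\textbf{Step 3 (approximation).} Next approximate $\Phi$ by elementary adapted processes $\Phi^n$, for instance by left-point time discretization followed by finite-valued truncation, so that $\int_J\mathbb{E}\|\Phi^n-\Phi\|^2\,\mathrm{d}\sigma\to0$. By Step 2 this forces $\int_J\mathbb{E}\|\Phi^n-\Phi\|_{L_2^0}^2\,\mathrm{d}\sigma\to 0$. The isometry then shows that the stochastic integrals $\int_J\Phi^n\,\mathrm{d}\mathsf{W}$ form a Cauchy sequence in $L^2(\Omega,\mathbb{H})$; their limit is the stochastic integral $\int_J\Phi\,\mathrm{d}\mathsf{W}$. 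Passing to the limit on both sides of the inequality for $\Phi^n$ gives the claim.

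The main obstacle is this density and measurability step. One has to check that $\Phi$ is genuinely progressively measurable and that the left-point discretizations converge in $L^2(\Omega\times J)$. Continuity of $g$ and mean-square continuity of $\vartheta$ give convergence in probability, but $L^2$ convergence requires uniform integrability, which holds when $g$ has linear growth. Otherwise I would work with general predictable approximants, use a localization (stopping-time) argument, and apply Fatou's lemma at the end. Everything else is the standard construction in \cite{da2014stochastic}.
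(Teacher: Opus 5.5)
The paper gives no proof of this lemma; it only cites Curtain--Falb. The argument there is exactly yours: the It\^o isometry $\mathbb{E}\|\int_J\Phi\,\mathrm{d}\mathsf{W}\|^2=\int_J\mathbb{E}\,Tr(\Phi\mathcal{Q}\Phi^*)\,\mathrm{d}\sigma$ for step integrands, extended by density, combined with $Tr(\Phi\mathcal{Q}\Phi^*)\le Tr(\mathcal{Q})\|\Phi\|^2$. So your proposal is correct and takes the same route.

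The obstacle you flag in Step~3 is milder than you fear. Assume $\vartheta$ adapted, as you implicitly do; the statement needs this but does not state it. Then $\vartheta$ has a progressively measurable modification, and $g$, being continuous on the separable space $J\times\mathbb{H}$, has separable range, so $\Phi$ is strongly measurable. When $\int_J\mathbb{E}\|\Phi\|^2\,\mathrm{d}\sigma<\infty$, the standard density of elementary processes applies directly, with no left-point discretization, growth condition or localization needed.
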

\subsection{Integrodifferential Equation}
%In what follows, we refer to the space of all continuous functions from $J$ into $\X$ with the sup-norm $$||v||_{\mathcal{C}}=\sup_{t\in J}||v(t)||,\ \text{for}\ v\in\mathcal{C}.$$ as $\mathcal{C}$.
%Let's consider $A$ and $\Pi(\sigma)$ as closed linear operators on $\mathbb{H}$. 
By defining $Y$ as a Banach space, $\mathcal{D}(A)$ powered with the graph norm given by $$\|y\|_Y := \|Ay\| + \|y\|\  \text{for}\ y\in Y.$$  
%In addition, consider the notations $C([0,+\infty); Y), B(Y,\X)$ that stand for the space of all continuous functions from $[0, +\infty)$ into $Y$, the set of all bounded linear operators from $Y$ into $\X$, respectively. 
Let's take into account the following Cauchy problem:
\begin{equation}\label{cauchy}
\left\{\begin{array}{ll}
\vartheta^\prime(\sigma)= A\vartheta(\sigma) + \displaystyle\int_0^\sigma \Pi(\sigma-s)\vartheta(s)ds \ \text{for}\ \sigma\geq 0 \\ 
\vartheta(0)= \vartheta_0\in \X.
\end{array}\right.
\end{equation}
To deal with a resolvent operator, we suppose that $A$ and $\Pi(\cdot)$ meet the following criteria:
\begin{enumerate}
	\item[(\textbf{R1})] $A$ is the infinitesimal generator of a $C_0$-semigroup $T(\sigma), \sigma\geq 0$ of uniformly bounded operators in $\X$.
	\item[(\textbf{R2})] For all $\sigma\geq 0, \Pi(\sigma)$ is a closed linear operator from $\mathcal{D}(A)$ to $\X$ and $\Pi(\sigma)\in B(Y, \X).$ For any $y\in Y$, the map $\sigma\to \Pi(\sigma)y$ is bounded, differentiable, and the derivative $\sigma\to \Pi'(\sigma)y$ is bounded uniformly continuous on $\mathbb{R}^+$. 
\end{enumerate}
\begin{definition}\cite{grimmer1982resolvent}
Let $\vartheta_0\in Y$. A solution $\vartheta(\cdot)$ of  \eqref{cauchy} is a function that belongs to $C([0, \infty), Y)\cup C^1([0, \infty), \X)$ so that $\vartheta(0) = \vartheta_0$ and \eqref{cauchy} is satisfied for all $\sigma\ge 0$.
\end{definition}
The definition of a resolvent operator (in the sense of Grimmer) follows logically from the definition given above.
\begin{definition}\cite{grimmer1982resolvent}\label{def21}
	A resolvent for Eq.(\ref{cauchy}) is a bounded linear operator-valued function $\Re(\sigma)\in B(\X)\ \text{for}\ \sigma\geq 0$, having the following properties :
	\begin{enumerate}
		\item[(a)] $\Re(0)=I$ and $|\Re(\sigma)|\leq Me^{\beta \sigma}$ for some constants $M>0$ and $\beta\in\mathbb{R}$.
		\item[(b)] For each $\vartheta\in \X, \Re(\sigma)\vartheta$ is continuous for $\sigma\geq 0$.
		\item[(c)] $\Re(\sigma)\in B(Y)$ for $\vartheta\in Y, \Re(\cdot)\vartheta\in \mathcal{C}^1([0,+\infty); \X)\cap \mathcal{C}([0; +\infty); Y)$ and 
		$$\begin{array}{r c l}
		\Re'(\sigma)\vartheta &=& A\Re(\sigma)\vartheta +\displaystyle\int_0^\sigma\Pi(\sigma-s)\Re(s)\vartheta\mathrm{d}s\\ 
		&=& \Re(\sigma)Av +\displaystyle\int_0^\sigma \Re(\sigma-s)\Pi(s)\vartheta\mathrm{d}s,\ \sigma\geq 0,
		\end{array}$$
	\end{enumerate}
\end{definition}
\noindent We recommend reading \cite{desch1984some, grimmer1982resolvent} and the reference therein, for more information about the resolvent operators and some of their most crucial characteristics.
\begin{theorem}\cite{grimmer1982resolvent} Assume that $(\textbf{H1})-(\textbf{H2})$ hold. Then there exists a unique resolvent operator for the Cauchy problem \eqref{sys}.
\end{theorem}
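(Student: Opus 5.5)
Under (R1)–(R2) (the paper writes (H1)–(H2)), the plan is Grimmer's: build the resolvent by perturbation on an enlarged state space, then read off the properties of Definition \ref{def21}. Uniqueness will follow from a Gronwall-type argument on the integrated equation. Note that the resolvent concerns only the linear part \eqref{cauchy}, not the full stochastic system \eqref{sys}.

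\emph{Construction.} Let $\mathcal{Z}=\X\times L^2(\mathbb{R}^+,\X)$, or a space of bounded uniformly continuous $\X$-valued functions as in \cite{grimmer1982resolvent}. On $\mathcal{Z}$ define
$$\mathcal{A}\begin{pmatrix} x\\ \phi\end{pmatrix}=\begin{pmatrix} Ax+\phi(0)\\ \phi'+\Pi(\cdot)x\end{pmatrix},\qquad \mathcal{D}(\mathcal{A})=\{(x,\phi): x\in \mathcal{D}(A),\ \phi\in H^1(\mathbb{R}^+,\X)\}.$$
Write $\mathcal{A}=\mathcal{A}_0+\mathcal{B}$, where $\mathcal{A}_0(x,\phi)=(Ax+\phi(0),\phi')$ and $\mathcal{B}(x,\phi)=(0,\Pi(\cdot)x)$. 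The operator $\mathcal{A}_0$ generates a $C_0$-semigroup built from $T(\sigma)$ and the left translation semigroup on the second component. This is checked by writing it down explicitly and verifying the semigroup property and strong continuity, using (R1).

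\emph{Perturbation.} Hypothesis (R2) makes $\sigma\mapsto\Pi(\sigma)x$ differentiable with bounded uniformly continuous derivative. This gives $\mathcal{B}$ enough regularity, namely relative boundedness with respect to $\mathcal{A}_0$ with the Miyadera–Voigt or Desch–Schappacher type estimate, so that $\mathcal{A}$ still generates a $C_0$-semigroup $\mathcal{S}(\sigma)$ with $\|\mathcal{S}(\sigma)\|\le Me^{\beta\sigma}$. I expect this step to be the main obstacle. It requires a careful choice of the phase space so that $x\mapsto \Pi(\cdot)x$, from $Y$ into the $\phi$-component, is admissible, and it is exactly where the differentiability of $\Pi$ is used. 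The first thing I would try is to integrate by parts in $\int_0^\sigma \mathcal{S}_0(\sigma-s)\mathcal{B}z(s)\,ds$, moving the derivative onto $\Pi$.

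\emph{Properties and uniqueness.} Define $\Re(\sigma)x=P_1\mathcal{S}(\sigma)(x,0)$, the first component.
\begin{itemize}
\item Property (a) follows from the growth bound of $\mathcal{S}$ and $\mathcal{S}(0)=I$.
\item Property (b) follows from strong continuity of $\mathcal{S}$.
\item For (c), take $x\in Y$, so that $(x,0)\in\mathcal{D}(\mathcal{A})$ up to the compatibility handled by the construction. Then $\mathcal{S}(\cdot)(x,0)$ is $C^1$. Solving the second component by variation of constants gives $\phi(\sigma)(\theta)=\int_0^\sigma \Pi(\sigma+\theta-s)\Re(s)x\,ds$. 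Substituting this into the first component yields the first identity in (c).
\item The second identity in (c) comes from the analogous construction for the adjoint/transposed ordering. Alternatively, take Laplace transforms: $\hat\Re(\lambda)=(\lambda-A-\hat\Pi(\lambda))^{-1}$ commutes appropriately, so both convolution orders agree.
\end{itemize}
Uniqueness: if $\Re_1,\Re_2$ are two resolvents, set $D(\sigma)x=\Re_1(\sigma)x-\Re_2(\sigma)x$. Compute $\frac{d}{ds}\big[\Re_1(\sigma-s)\Re_2(s)x\big]$ using the right form of (c) for $\Re_1$ and the left form for $\Re_2$, integrate over $[0,\sigma]$, and cancel the convolution terms by Fubini. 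This gives $D(\sigma)x=0$ on the dense set $Y$, and hence on all of $\X$ by the bound in (a).
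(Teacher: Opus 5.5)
The paper gives no argument here beyond citing Grimmer, so your outline has to stand on its own. You correctly read the hypotheses as (R1)--(R2) and the target as the linear problem \eqref{cauchy}. The parts you actually carry out are sound, \emph{once the semigroup exists}: (a), (b), the first identity in (c) via $\phi(\sigma)(\theta)=\int_0^\sigma\Pi(\sigma+\theta-s)\Re(s)x\,ds$, and the uniqueness computation. In the latter the convolution terms cancel exactly after the substitution $v=s+u$, so no Gronwall argument is needed. The gap is the step you flag: generation of a $C_0$-semigroup by $\mathcal{A}$. With your splitting, none of the theorems you name delivers it.
\begin{itemize}
\item \textbf{Miyadera--Voigt fails.} It needs $\int_0^{t_0}\|\mathcal{B}\mathcal{S}_0(s)z\|\,ds\le q\|z\|$. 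For $z=(x,0)$ one has $\mathcal{S}_0(s)z=(T(s)x,0)$, so this reads $\int_0^{t_0}\sup_{\tau\ge0}\|\Pi(\tau)T(s)x\|\,ds\le q\|x\|$. That is an admissibility condition (R2) does not provide. Because the norm sits inside the integral, integrating by parts cannot help. It already fails for the paper's own example $\Pi(\sigma)=\theta(\sigma)A$ with the Dirichlet Laplacian, $Ae_n=-n^2e_n$. For $x_N=\sum_{k\le N}k^{-1}e_{2^k}\in D(A)$ one has $\|AT(s)x_N\|\ge e^{-1}k^{-1}4^k$ on $[4^{-k-1},4^{-k}]$ for $k\le N$. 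Hence $\int_0^{t_0}\|AT(s)x_N\|\,ds\to\infty$ while $\|x_N\|$ stays bounded.
\item \textbf{Desch--Schappacher does not fit.} It is built for perturbations in $\mathcal{L}(\mathcal{Z},\mathcal{Z}_{-1})$, whereas your $\mathcal{B}$ is defined only for $x\in Y$ and takes values in $\mathcal{Z}$ itself.
\item \textbf{The phase space $\X\times L^2(\mathbb{R}^+,\X)$ is not admissible.} Under (R2), $\Pi(\cdot)x$ is bounded but need not be square integrable.
\end{itemize}

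What works is to exchange the roles of the two off-diagonal entries. On $\mathcal{Z}=\X\times BU(\mathbb{R}^+,\X)$ the boundary map $\delta_0\phi=\phi(0)$ is bounded, so $\begin{pmatrix}0&\delta_0\\0&0\end{pmatrix}$ is a bounded perturbation. It then suffices to show that $\begin{pmatrix}A&0\\ \Pi&d/d\tau\end{pmatrix}$ generates the explicit lower-triangular family $\begin{pmatrix}T(t)&0\\K(t)&S(t)\end{pmatrix}$. Here $S$ is left translation and $(K(t)x)(\tau)=\int_0^t\Pi(t-s+\tau)T(s)x\,ds$ for $x\in Y$. This is where your integration-by-parts idea belongs. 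Set $V(s)x=\int_0^sT(r)x\,dr$; then $AV(s)x=T(s)x-x$, hence $\|V(s)x\|_Y\le C\|x\|$, and
\[
(K(t)x)(\tau)=\Pi(\tau)V(t)x+\int_0^t\Pi'(t-s+\tau)V(s)x\,ds .
\]
Banach--Steinhaus gives uniform bounds on $\Pi(\cdot)$ and $\Pi'(\cdot)$ in $B(Y,\X)$. Together with the formula, this yields $\|K(t)x\|_\infty\le C(t)\|x\|$, so $K(t)$ extends to $\X$; this is exactly where the differentiability in (R2) is used. The following are then routine:
\begin{itemize}
\item the cocycle identity $K(t+s)=K(t)T(s)+S(t)K(s)$;
\item strong continuity, from $\|V(t)x\|_Y\to0$;
\item identification of the generator domain as $D(A)\times\{\phi\in BU:\phi'\in BU\}$.
\end{itemize}

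A smaller point concerns your Laplace-transform route to the second identity in (c). The first identity only makes $\hat\Re(\lambda)$ a right inverse of $\lambda-A-\hat\Pi(\lambda)$. You also need injectivity on $Y$ for large $\lambda$. This follows from a Neumann series for $I-\hat\Pi(\lambda)(\lambda-A)^{-1}$, since $\|\hat\Pi(\lambda)\|_{B(Y,\X)}=O(1/\lambda)$ while $\|(\lambda-A)^{-1}\|_{B(\X,Y)}$ stays bounded.
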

\noindent In the following, we give some results for the existence of solutions for the following integrodifferential equation:
\begin{equation}\label{IE}
\begin{cases}
\vartheta^\prime(\sigma)=Av(\sigma)+\displaystyle\int_0^\sigma\Pi(\sigma-s)\vartheta(s)ds + q(\sigma)\quad \ \text{for}\; \sigma\ge 0\\
\vartheta(0)=\vartheta_0\in \X, 
\end{cases}
\end{equation}
where $q :\mathbb{R}^+\rightarrow \X$ is a continuous function.
\begin{definition}\cite{grimmer1982resolvent} A continuous function $\vartheta :\mathbb{R}^+\rightarrow \X$ is a strict solution of equation \eqref{IE} if :
\begin{enumerate}
\item $\vartheta\in \mathcal{C}^1(\mathbb{R}^+;\X)\bigcap \mathcal{C}(\mathbb{R}^+; Y)$ and
	\item $\vartheta$ satisfies equation \eqref{IE}.
\end{enumerate}	
\end{definition}
\begin{theorem}\cite{grimmer1982resolvent} 
Assume that (\textbf{H1})-(\textbf{H2}) is verified. If $\vartheta$ is a strict solution of equation \eqref{IE}, then $$ \vartheta(\sigma)=\Re(\sigma)\vartheta_0+\displaystyle\int_0^\sigma \Re(\sigma-s)q(s)ds\quad for\; \sigma\geq 0.$$
\end{theorem}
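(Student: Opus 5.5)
The plan is the standard argument that a strict solution coincides with the variation-of-constants expression. Fix $\sigma>0$ and set $w(s)=\Re(\sigma-s)\vartheta(s)$ for $s\in[0,\sigma]$. I will differentiate $w$ in $s$, integrate from $0$ to $\sigma$, and use the second form of the resolvent identity in Definition \ref{def21}(c). The memory terms should then cancel after a Fubini interchange. Since $\vartheta(\sigma)$ is a strict solution, $\vartheta\in\mathcal{C}^1(\mathbb{R}^+;\X)\cap\mathcal{C}(\mathbb{R}^+;Y)$. By Definition \ref{def21}(c), $\Re(\cdot)y\in\mathcal{C}^1([0,\infty);\X)$ for $y\in Y$, and $\Re$ is locally uniformly bounded in $B(\X)$ by (a).

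\textbf{Step 1: differentiate $w$.} A product rule argument gives
\[
w'(s)=-\Re'(\sigma-s)\vartheta(s)+\Re(\sigma-s)\vartheta'(s).
\]
To justify it, split the difference quotient as
\[
\tfrac1h\bigl[\Re(\sigma-s-h)-\Re(\sigma-s)\bigr]\vartheta(s+h)+\Re(\sigma-s)\tfrac1h\bigl[\vartheta(s+h)-\vartheta(s)\bigr].
\]
The second term converges because $\Re(\sigma-s)$ is bounded. For the first term, $\Re'(\cdot)$ is uniformly bounded on compacts as an operator from $Y$ to $\X$, by the uniform boundedness principle applied to (c). This, together with $\vartheta(s+h)\to\vartheta(s)$ in $Y$, gives convergence to $-\Re'(\sigma-s)\vartheta(s)$. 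I expect this step to be the main technical obstacle. It needs the $Y$-valued continuity of $\vartheta$ and the $B(Y,\X)$-boundedness of $\Re'$, which is where the closedness of $\Pi$ and the graph norm enter.

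\textbf{Step 2: substitute the equations.} I use the second expression in (c),
\[
\Re'(\tau)y=\Re(\tau)Ay+\int_0^\tau\Re(\tau-r)\Pi(r)y\,dr,
\]
with $\tau=\sigma-s$ and $y=\vartheta(s)$, together with \eqref{IE} for $\vartheta'(s)$. The terms $\Re(\sigma-s)A\vartheta(s)$ cancel, leaving
\[
w'(s)=-\int_0^{\sigma-s}\Re(\sigma-s-r)\Pi(r)\vartheta(s)\,dr+\Re(\sigma-s)\int_0^s\Pi(s-r)\vartheta(r)\,dr+\Re(\sigma-s)q(s).
\]

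\textbf{Step 3: integrate and cancel.} Integrating over $[0,\sigma]$ gives $w(\sigma)-w(0)=\vartheta(\sigma)-\Re(\sigma)\vartheta_0$ on the left. The two double integrals are both integrals of $\Re(\sigma-a-b)\Pi(b)\vartheta(a)$ over the triangle $\{a,b\ge0,\ a+b\le\sigma\}$, and I will check that they are equal by Fubini. In the first, set $a=s$ and $b=r$. In the second, set $a=r$ and $b=s-r$, and interchange the order of integration. The interchange is legitimate because the integrand is continuous in $\X$: $\Pi(\cdot)\in B(Y,\X)$ is strongly continuous by (R2), $\vartheta\in\mathcal{C}(Y)$, and $\Re$ is strongly continuous and locally bounded. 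The double integrals therefore cancel, and what remains is
\[
\vartheta(\sigma)=\Re(\sigma)\vartheta_0+\int_0^\sigma\Re(\sigma-s)q(s)\,ds.
\]
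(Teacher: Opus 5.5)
Your proof is correct. The paper gives no proof of its own and only cites Grimmer, and your route is the standard argument for the cited result: differentiate $s\mapsto\Re(\sigma-s)\vartheta(s)$, use the second form of the identity in Definition~\ref{def21}(c) so the $A$-terms cancel, then cancel the two memory integrals over the triangle $\{a,b\ge0,\ a+b\le\sigma\}$ by Fubini. One minor correction to your commentary: the uniform bound on $\|\Re'(\cdot)\|_{B(Y,\X)}$ comes from Banach--Steinhaus and the completeness of $Y$ (closedness of $A$), not from the closedness of $\Pi$.
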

\begin{lemma}\cite{desch1984some}\label{lemma26}
	Assume that (\textbf{R1})-(\textbf{R2}) is satisfied. The resolvent operator $(\Re(\sigma))_{\sigma\geq 0}$ is compact for $\sigma>0$ if and only if the semigroup $(T(\sigma))_{\sigma\geq 0}$ is compact for $\sigma>0$.
\end{lemma}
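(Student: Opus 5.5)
The statement is the equivalence in Lemma \ref{lemma26}: the resolvent $\Re(\sigma)$ is compact for all $\sigma>0$ if and only if $T(\sigma)$ is compact for all $\sigma>0$. The plan is to compare $\Re$ and $T$ through a variation-of-constants identity and show that their difference is always a compact operator. The equation in Definition \ref{def21}(c) reads $\Re'(\sigma)x = A\Re(\sigma)x + \int_0^\sigma \Pi(\sigma-s)\Re(s)x\,\mathrm{d}s$, and $\Re(0)=I$. Treating the integral term as an inhomogeneity and applying the semigroup variation-of-parameters formula, first for $x\in Y$, we expect
\begin{equation*}
\Re(\sigma)x = T(\sigma)x + \int_0^\sigma T(\sigma-r)\int_0^r \Pi(r-s)\Re(s)x\,\mathrm{d}s\,\mathrm{d}r .
\end{equation*}
Since $\Pi(\cdot)$ is only bounded from $Y$ to $\X$, this identity is unsafe as written. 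Following (\textbf{R2}), I would integrate by parts in the inner integral using the differentiability of $\Pi$. Writing $\int_0^r \Pi(r-s)\Re(s)x\,\mathrm{d}s$ in terms of $\Pi(0)\int_0^r \Re(s)x\,\mathrm{d}s$ and $\int_0^r \Pi'(r-s)\int_0^s\Re(\tau)x\,\mathrm{d}\tau\,\mathrm{d}s$ puts $\Re$ inside time integrals, and these integrals map $\X$ into $Y$ with bounded $Y$-norm (a standard resolvent estimate, as in Grimmer). This yields a kernel representation $\Re(\sigma)-T(\sigma)=\int_0^\sigma T(\sigma-r)K(r)\,\mathrm{d}r$, where $K(r)$ is bounded on $\X$ uniformly on compact intervals, is strongly continuous, and extends to all of $\X$ by density.

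For the direction ``$T$ compact $\Rightarrow \Re$ compact'', the starting point is that an operator of the form $\int_0^\sigma T(\sigma-r)K(r)\,\mathrm{d}r$ is compact whenever $T(t)$ is compact for $t>0$. Since $T$ is then norm continuous for $t>0$, we can split the integral as $\int_0^{\sigma-\varepsilon}$ plus a remainder of norm $O(\varepsilon)$. On the first piece, write $T(\sigma-r)=T(\varepsilon)T(\sigma-r-\varepsilon)$, so that this piece equals $T(\varepsilon)\circ(\text{bounded operator})$ and is therefore compact. Compact operators form a closed subspace, so letting $\varepsilon\to0$ shows the whole integral is compact, and hence $\Re(\sigma)=T(\sigma)+\text{compact}$ is compact.

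For the converse I would use the symmetric identity. Regard $T$ as the solution of $T' = AT$ and treat $-\int \Pi\,T$ as the perturbation, so that $T$ is expressed through $\Re$: $T(\sigma)=\Re(\sigma)-\int_0^\sigma \Re(\sigma-r)\tilde K(r)\,\mathrm{d}r$ with a similar bounded kernel $\tilde K$. Here $\tilde K$ again comes from integrating $\Pi$ by parts, using the second form $\Re(\sigma)A+\int\Re(\sigma-s)\Pi(s)\,\mathrm{d}s$ in (c). The same $\varepsilon$-splitting then applies, provided $\Re$ is norm continuous for $\sigma>0$ whenever it is compact. This is the main obstacle, because $\Re$ lacks the semigroup property, so the factorisation trick $T(\varepsilon)T(\cdot)$ has no direct analogue for $\Re$. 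To handle it, I would first prove norm continuity of $\Re$ on $(0,\infty)$ from compactness. The idea is to approximate $\Re(\sigma+h)$ by $\Re(h)$ composed with an operator plus an $O(h)$-small error, via the resolvent equation, and then use that strongly continuous families converge uniformly on compact sets, here the image under the compact operator $\Re(\sigma)$ of the unit ball. If that route stalls, the fallback is the Laplace-transform characterization from \cite{desch1984some}: compare $(\lambda-A-\hat\Pi(\lambda))^{-1}$ with $(\lambda-A)^{-1}$, and use the fact that compactness of the semigroup is equivalent to norm continuity together with a compact resolvent.
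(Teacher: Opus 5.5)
The paper gives no proof of this lemma. It is quoted from \cite{desch1984some}, so there is no in-text argument to compare yours against. Judged on its own, your outline is correct.

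For $x\in Y$, the identity $\Re(\sigma)x=T(\sigma)x+\int_0^\sigma T(\sigma-r)K(r)x\,\mathrm{d}r$ is plain variation of constants for a classical solution. After integrating by parts, $K(r)x=\Pi(0)F(r)+\int_0^r\Pi'(r-s)F(s)\,\mathrm{d}s$ with $F(s)=\int_0^s\Re(\tau)x\,\mathrm{d}\tau$. The estimate $\|F(s)\|_Y\le C\|x\|$ you invoke does hold. Integrating (c) in time (Fubini plus one integration by parts) gives $\Re(t)x-x=AF(t)+\int_0^t\Pi(t-\tau)F(\tau)\,\mathrm{d}\tau$. Gronwall applied to $\|AF(t)\|$ then gives the bound, and density plus closedness of $A$ extends it to all $x\in\X$.

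In the forward direction you do not even need norm continuity of $T$. The factorisation through $T(\varepsilon)$ and the $O(\varepsilon)$ tail suffice.

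For the converse identity, apply the variation-of-constants formula for \eqref{IE} to the strict solution $T(\cdot)x$, $x\in Y$, with forcing $q(\sigma)=-\int_0^\sigma\Pi(\sigma-s)T(s)x\,\mathrm{d}s$. The kernel $\tilde K$ is then $\X$-bounded by the same integration by parts with $G(s)=\int_0^sT(\tau)x\,\mathrm{d}\tau$, since $AG(s)=T(s)x-x$.

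What you call the main obstacle is not really one. The factorisation trick \emph{does} have a direct analogue for $\Re$: Lemma~\ref{lemma28}, $\|\Re(t+\varepsilon)-\Re(\varepsilon)\Re(t)\|\le\gamma\varepsilon$, which the paper quotes from the same source. You can prove it with your own tools. The map $\varepsilon\mapsto\Re(t+\varepsilon)x$ solves \eqref{IE} with initial value $\Re(t)x$ and forcing $\int_0^t\Pi(t+\varepsilon-s)\Re(s)x\,\mathrm{d}s$, which is $\X$-bounded after integrating by parts.

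Using this estimate for $r\le\sigma-2\varepsilon$ gives, in operator norm,
\[
\int_0^{\sigma}\Re(\sigma-r)\tilde K(r)\,\mathrm{d}r=\Re(\varepsilon)\int_0^{\sigma-2\varepsilon}\Re(\sigma-\varepsilon-r)\tilde K(r)\,\mathrm{d}r+O(\varepsilon).
\]
So the integral is a norm limit of compact operators and $T(\sigma)$ is compact, with no appeal to norm continuity of $\Re$.

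If you keep the norm-continuity detour instead, note what is still missing. Your sketch $\|\Re(t+h)-\Re(t)\|\le\gamma h+\|(\Re(h)-I)\Re(t)\|$ only yields right continuity. Left continuity needs an extra step, e.g.\ $\Re(t-h)=\Re(\delta)\Re(t-h-\delta)+O(\delta)$ with $\delta$ fixed, then $\delta\to0$. You would also still need a Riemann-sum approximation to carry compactness through the strong integral. The Laplace-transform fallback is unnecessary.
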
 
\begin{lemma}\cite{grimmer1982resolvent}\label{lemma27} 
    Assume that (\textbf{R1})-(\textbf{R2}) is satisfied. If the resolvent operator $(\Re(\sigma))_{\sigma\geq 0}$ is compact for $\sigma >0$, then it is norm continuous (or continuous in the uniform operator topology) for $\sigma>0$. 
\end{lemma}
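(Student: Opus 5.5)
The plan is to derive norm continuity of $\Re(\sigma)$ on $(0,\infty)$ from the variation-of-constants identity that links $\Re$ to the semigroup $T$, together with compactness. Fix $\sigma_0>0$ and write $\Re(\sigma)=\Re(\sigma-\epsilon+\epsilon)$. I would use the perturbation formula coming from Definition \ref{def21}(c) and (\textbf{R1})--(\textbf{R2}): for $x\in Y$,
\begin{equation*}
\Re(\sigma)x = T(\sigma)x+\int_0^\sigma T(\sigma-s)\int_0^s \Pi(s-\tau)\Re(\tau)x\,\mathrm{d}\tau\,\mathrm{d}s .
\end{equation*}
I would then show that the Volterra term $V(\sigma)x$ extends to a family that is continuous in the uniform operator topology on $[0,\infty)$. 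To do this, integrate by parts in $\tau$ using the differentiability of $\Pi(\cdot)y$ in (\textbf{R2}), which moves the unbounded $\Pi$ onto $\Pi'$ and $\Pi(0)$. After the integration by parts, $\int_0^s\Pi(s-\tau)\Re(\tau)x\,\mathrm{d}\tau$ becomes a bounded operator of $x$ in the $\X$-norm. Then $V(\sigma)$ is an integral of bounded operators with uniformly bounded integrand, and $\sigma\mapsto V(\sigma)$ is norm continuous, by the standard estimate $\|V(\sigma+h)-V(\sigma)\|\le \int_\sigma^{\sigma+h}\cdots+\int_0^\sigma\|(T(h)-I)\cdots\|$. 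The second piece I would handle through the compactness argument described next.

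This reduces the claim to norm continuity of $T(\sigma)$ for $\sigma>0$. By Lemma \ref{lemma26}, compactness of $\Re(\sigma)$ for $\sigma>0$ gives compactness of $T(\sigma)$ for $\sigma>0$. A compact $C_0$-semigroup is norm continuous on $(0,\infty)$ by the classical argument: $T(\sigma+h)-T(\sigma)=(T(h)-I)T(\sigma)$, and strong convergence $T(h)\to I$ is uniform on the relatively compact set $T(\sigma)(\text{unit ball})$. Using the same principle, namely that strong convergence is uniform on compact sets, one can also bypass $T$ and argue directly. For $h>0$ small and $\sigma>\epsilon>0$, write $\Re(\sigma+h)-\Re(\sigma)$ through a resolvent semigroup-like decomposition in which a compact factor $\Re(\epsilon)$ appears on the right. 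Then the strongly continuous family $\Re(\cdot+h)-\Re(\cdot)$ acting on a compact set converges uniformly.

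The main obstacle is that $\Re$ lacks the semigroup property, so $\Re(\sigma+h)\ne\Re(h)\Re(\sigma)$ and the one-line semigroup argument fails. This is why I go through $T$ and the Volterra term. The delicate part is bounding the memory term uniformly in operator norm despite $\Pi(\sigma)$ being only $Y$-bounded. I expect to handle it by the integration by parts above together with the density of $Y$ and uniform bounds $\|\Re(\sigma)\|\le Me^{\beta\sigma}$. Combining the norm continuity of $T$ on $(0,\infty)$ with the norm continuity of $V$ yields the lemma.
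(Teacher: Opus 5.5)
The paper gives no proof of this lemma; it is quoted from Grimmer. I therefore judge your argument on its own.

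\textbf{The overall structure works, but the key step is not justified.} Your plan is to write $\Re=T+V$, get compactness of $T$ from Lemma \ref{lemma26}, use norm continuity of a compact semigroup, and then get norm continuity of $V$ by dominated convergence. The last step needs $V(\sigma)=\int_0^\sigma T(\sigma-s)Q(s)\,\mathrm{d}s$ with $Q(s)=\int_0^s\Pi(s-\tau)\Re(\tau)\,\mathrm{d}\tau$ uniformly bounded in $B(\X)$. That boundedness is exactly the step you call delicate, and it is where the gap sits. Integrating by parts in $\tau$ gives, for $x\in Y$,
\[
Q(s)x=\Pi(0)W(s)x+\int_0^s\Pi'(s-\tau)W(\tau)x\,\mathrm{d}\tau,\qquad W(\tau)x=\int_0^\tau\Re(r)x\,\mathrm{d}r .
\]
However, $\Pi(0)$ and $\Pi'(u)$ are no better than $\Pi(u)$: they lie only in $B(Y,\X)$, and by Banach--Steinhaus they are uniformly bounded there on $[0,c]$. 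So this identity gives $\|Q(s)x\|\le C\|x\|$ only if you also know that $W(\tau)$ maps $\X$ into $Y$ with $\|W(\tau)x\|_Y\le C\|x\|$ uniformly for $\tau\in[0,c]$. You never state this smoothing property. It also does not follow from density of $Y$ and the bound $\|\Re(\sigma)\|\le Me^{\beta\sigma}$, since neither controls $\|A W(\tau)x\|$.

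\textbf{How to close the gap.} The smoothing property is true. Integrate Definition \ref{def21}(c) from $0$ to $t$ and apply Fubini; for $x\in Y$ this gives
\[
AW(t)x=\Re(t)x-x-\int_0^t\Pi(u)W(t-u)x\,\mathrm{d}u .
\]
Using $\|\Pi(u)y\|\le b\|y\|_Y$ and $\|W(v)x\|\le Mc\|x\|$, Gronwall gives $\|AW(t)x\|\le C\|x\|$. Closedness of $A$ then extends this to all $x\in\X$. With this ingredient added, your route goes through.

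\textbf{A much shorter route is already available in the paper.} Lemma \ref{lemma28} gives the decomposition your last paragraph gestures at, with no need for $T$, Lemma \ref{lemma26}, or the smoothing estimate.

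For right continuity, take $0<h\le\sigma$. Then
\[
\|\Re(\sigma+h)-\Re(\sigma)\|\le\gamma h+\|(\Re(h)-I)\Re(\sigma)\| .
\]
The last term tends to $0$ because $\Re(h)\to I$ strongly with $\|\Re(h)\|\le M$. Such convergence is uniform on the relatively compact set obtained by applying $\Re(\sigma)$ to the unit ball.

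For left continuity, take $h\le\sigma-h$. Then
\[
\|\Re(\sigma)-\Re(\sigma-h)\|\le\gamma h+\sup_{\|x\|\le1}\|(\Re(h)-I)\Re(\sigma-h)x\| .
\]
The set $\{\Re(\sigma-k)x:\|x\|\le1,\ 0\le k\le h_0\}$ is totally bounded. Indeed, by Lemma \ref{lemma28} applied to $\Re((\sigma-k-\epsilon)+\epsilon)$, it lies within $\gamma\epsilon$ of the image under $\Re(\epsilon)$ of the ball of radius $M$, for every small $\epsilon>0$. So the same uniform-convergence argument applies.

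In this direct proof the short-time factor sits on the left. Compactness is used from $\Re(\sigma)$ on the right for right continuity, and from $\Re(\epsilon)$ on the left for left continuity. This is not the arrangement your last paragraph describes.
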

\begin{lemma}\cite{desch1984some}\label{lemma28} For all $c>0$, there exists a constant $\gamma=\gamma(c)$ such that 
    \begin{equation*}
    || \Re(\sigma+ \epsilon)-\Re(\epsilon)\Re(\sigma)||_{\X}\leq \gamma \epsilon ,\quad \text{for}\; 0 \leq \epsilon \leq \sigma\leq c. 
    \end{equation*}
\end{lemma}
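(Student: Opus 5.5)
The plan is to write $\Re(\sigma+\epsilon)-\Re(\epsilon)\Re(\sigma)$ as a Volterra-type integral and then bound it by Gronwall. Both $\Re(\cdot+\epsilon)$ and $\Re(\epsilon)\Re(\cdot)$ satisfy the resolvent equation of Definition \ref{def21}(c). They differ only through the memory part of the equation, namely the part of the convolution that "remembers" the interval $[0,\epsilon]$.

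First I would argue on the dense subspace $Y=\mathcal{D}(A)$ and then extend by boundedness. Fix $\epsilon\in[0,c]$ and $x\in Y$, and set
\[
w(\sigma)=\Re(\sigma+\epsilon)x-\Re(\epsilon)\Re(\sigma)x,\qquad \sigma\in[0,c].
\]
Then $w(0)=0$. Differentiating with the first form of Definition \ref{def21}(c) gives
\[
\frac{d}{d\sigma}\Re(\sigma+\epsilon)x=A\Re(\sigma+\epsilon)x+\int_0^{\sigma+\epsilon}\Pi(\sigma+\epsilon-s)\Re(s)x\,ds .
\]
I would split this integral at $s=\epsilon$ and substitute $s=\epsilon+r$ in the part over $[\epsilon,\sigma+\epsilon]$.

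For the second term I would use the second form of (c), i.e. $\Re'(\epsilon)z=\Re(\epsilon)Az+\int_0^\epsilon\Re(\epsilon-s)\Pi(s)z\,ds$, to commute $A$ past $\Re(\epsilon)$ up to a correction. This gives an inhomogeneous equation of the form
\[
w'(\sigma)=Aw(\sigma)+\int_0^\sigma\Pi(\sigma-r)w(r)\,dr+F_\epsilon(\sigma),
\]
where $F_\epsilon$ collects $\int_0^\epsilon\Pi(\sigma+\epsilon-s)\Re(s)x\,ds$ and commutator terms of the form $A\Re(\epsilon)-\Re(\epsilon)A$ and $\Re(\epsilon)\Pi-\Pi\Re(\epsilon)$ applied to $\Re(\sigma)x$. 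By the variation of constants formula (the theorem above on strict solutions of \eqref{IE}), $w(\sigma)=\int_0^\sigma\Re(\sigma-s)F_\epsilon(s)\,ds$. The bound $\|\Re(t)\|\le Me^{\beta t}$ then reduces everything to showing $\|F_\epsilon(s)x\|\lesssim \epsilon\|x\|$.

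The main obstacle is that $F_\epsilon$ involves unbounded operators, namely $\Pi(\cdot)$ acting on $Y$, together with commutators with $A$. An integral over an interval of length $\epsilon$ is naturally $O(\epsilon)$ only in the $Y$-norm, not the $\mathbb{H}$-norm. To get around this, I would first integrate by parts in the memory integrals, using (\textbf{R2}): $\Pi(\cdot)y$ is differentiable with $\Pi'$ bounded and uniformly continuous. For instance,
\[
\int_0^{\epsilon}\Pi(\sigma+\epsilon-s)\Re(s)x\,ds
\]
can be rewritten using $\Re(s)x=x+\int_0^s\Re'(\tau)x\,d\tau$, or equivalently through the identity $\int_0^t\Re(s)x\,ds\in Y$ with $A\int_0^t\Re(s)x\,ds$ controlled. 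This is the standard device of Grimmer and Desch--Grimmer--Schappacher. It converts $Y$-norm bounds on $\int_0^t\Re(s)x\,ds$ into $\mathbb{H}$-norm bounds of order $t\|x\|$, and yields $\|F_\epsilon(s)\|_{B(\mathbb{H})}\le K\epsilon$ with $K$ depending only on $c$, $M$, $\beta$ and the bounds for $\Pi$ and $\Pi'$.

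Finally, I would estimate
\[
\|w(\sigma)\|\le\int_0^\sigma Me^{\beta(\sigma-s)}K\epsilon\|x\|\,ds\le \gamma\epsilon\|x\|
\]
for $0\le\epsilon\le\sigma\le c$. If instead the estimate comes out implicitly, with $w$ appearing on the right-hand side, I would close it with Gronwall's inequality. Density of $Y$ in $\mathbb{H}$ and boundedness of $\Re$ then extend the bound to all $x\in\mathbb{H}$, giving $\gamma=\gamma(c)$ as claimed.
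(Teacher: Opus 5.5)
You have a genuine gap. The central step, the bound $\|F_\epsilon(s)\|_{B(\mathbb{H})}\le K\epsilon$, is false in general. The paper gives no proof here (it quotes Desch--Grimmer--Schappacher), so your argument must stand alone.

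\textbf{A counterexample to the key bound.} Take $\Pi(t)=b_0A$ with $b_0\neq0$ constant and $A$ the Dirichlet Laplacian. This satisfies \textbf{(R1)}--\textbf{(R2)} and is of the type $\theta(\sigma)A$ used in the paper's example.
\begin{itemize}
\item Here $\Re(\epsilon)$ commutes with $A$ and $\Pi$, so all your commutator terms vanish and $F_\epsilon(\sigma)x=b_0A\int_0^\epsilon\Re(s)x\,ds$.
\item Integrating the resolvent equation gives $A\int_0^\epsilon\Re(s)x\,ds=\Re(\epsilon)x-x+O(\epsilon\|x\|)$.
\item $\|\Re(\epsilon)-I\|_{B(\mathbb{H})}$ does not tend to $0$ (test on high eigenmodes). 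Hence $\|F_\epsilon(\sigma)\|_{B(\mathbb{H})}$ stays of order $|b_0|$.
\end{itemize}

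\textbf{Why integration by parts does not help.} It produces the boundary term $\Pi(\sigma)\int_0^\epsilon\Re(s)x\,ds$. Its $\mathbb{H}$-norm is controlled by $\|\int_0^\epsilon\Re(s)x\,ds\|_Y$. The device you cite gives $\|\int_0^t\Re(s)x\,ds\|_Y\le C\|x\|$ with \emph{no} factor $t$; the factor $t$ is available only in the $\mathbb{H}$-norm.

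\textbf{The commutator terms have the same defect in the noncommuting case.} The two forms of Definition~\ref{def21}(c) give $\|A\Re(\epsilon)y-\Re(\epsilon)Ay\|\le C\epsilon\|y\|_Y$. With $y=\Re(\sigma)x$ this is $O(\epsilon\|x\|_Y)$, which does not extend to all of $\mathbb{H}$ by density. In the example the lemma is still true, but the factor $\epsilon$ appears only after moving $A$ onto the outer integral $\int_0^\sigma\Re(\sigma-s)\,ds$. That move is unavailable when $\Re(\epsilon)$ does not commute with $A$ and $\Pi$.

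\textbf{The fix: exchange the roles of the variables.} The evolution variable should be the argument of the operator applied last, here $\epsilon$. Fix $\sigma$ and $x\in Y$, and set $u(\tau)=\Re(\tau+\sigma)x$. Split $\int_0^{\tau+\sigma}\Pi(\tau+\sigma-s)\Re(s)x\,ds$ at $s=\sigma$ and use only the first form of (c). This shows that $u$ is a strict solution of \eqref{IE} with $u(0)=\Re(\sigma)x$ and forcing $g_\sigma(\tau)=\int_0^\sigma\Pi(\tau+\sigma-s)\Re(s)x\,ds$. Hence
\[
\Re(\sigma+\epsilon)x-\Re(\epsilon)\Re(\sigma)x=\int_0^\epsilon\Re(\epsilon-\tau)\,g_\sigma(\tau)\,d\tau ,
\]
with no commutators at all. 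Now $\epsilon$ is the length of the integration interval, so you only need $\|g_\sigma(\tau)\|\le K\|x\|$ uniformly for $\tau,\sigma\in[0,c]$.

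That is where your integration by parts belongs. With $V(t)=\int_0^t\Re(s)x\,ds$,
\[
g_\sigma(\tau)=\Pi(\tau)V(\sigma)+\int_0^\sigma\Pi'(\tau+\sigma-s)V(s)\,ds .
\]
The bound $\|V(t)\|_Y\le C\|x\|$ follows from three ingredients:
\begin{itemize}
\item the identity $AV(t)=\Re(t)x-x-\int_0^t\Pi(t-r)V(r)\,dr$;
\item the uniform bounds on $\|\Pi(t)\|_{B(Y,\mathbb{H})}$ and $\|\Pi'(t)\|_{B(Y,\mathbb{H})}$, from \textbf{(R2)} and the uniform boundedness principle;
\item Gronwall's inequality.
\end{itemize}
Density of $Y$ then gives the lemma, and the restriction $\epsilon\le\sigma$ is not even needed.
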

%Therefore, we have the following definition,
In the following, we give the related definition of approximate controllability.
\begin{definition}
    The admissible set of system \eqref{sys} at the terminal time $c$ is then referred to as $$\mathcal{R}(c)=\bigg\{\vartheta(c, u): u\in L_\mathcal{F}^2(J, \mathbb{Y})\bigg\},$$ where $\vartheta(c; u)$ denotes the state value of system \eqref{sys} at the terminal time $c$ which corresponds to the control $u$.
\end{definition}
\noindent The above leads to the definition of approximate controllability of system \eqref{sys}.
\begin{definition}\cite{sakthivel2012approximate} The stochastic nonlocal integrodifferential system \eqref{sys} is said to be approximately controllable on the interval $J$ if we have $\overline{\mathcal{R}(c)}= \mathbb{H}$. That is, for all $\epsilon>0$ and every desired final state $\vartheta_1\in \mathbb{H}$, there exists a control $u\in L_\mathcal{F}^2(J, \mathbb{Y})$ such that $\vartheta$ satisfies $\|\vartheta(c) - \vartheta_1\|<\epsilon$.
\end{definition}
\noindent In order to determine the approximate controllability result of system \eqref{sys}, we take into account the approximate controllability of the subsequent  linear integrodifferential system:
 \begin{equation}\label{linsys}
	\begin{cases}
	\vartheta'(\sigma)=A\vartheta(\sigma)+\displaystyle\int_0^\sigma \Pi(\sigma-s)\vartheta(s)\mathrm{d}s  + \mathsf{C}u(\sigma) \quad \text{for}\; \sigma\in J=[0, c]\\ 
	\vartheta(0) =  \vartheta_0, 
	\end{cases}
\end{equation}
and define the following controllability and resolution operators,
 \begin{equation*}
    \begin{cases}
    \Delta_0^c = \displaystyle\int_0^c \Re(c-s)\mathsf{C}\mathsf{C}^*\Re^*(c-s)\mathrm{d}s\\ 
    S(\mu, \Delta_0^c) = (\mu I + \Delta_0^c)^{-1}, \mu>0,
    \end{cases}
\end{equation*}
where the operators $\mathsf{C}^*$ and $\Re^*(\sigma)$ denote  the adjoint of $\mathsf{C}$ and $\Re(\sigma),$ respectively. It is commonly known that the operator $\Delta_0^c$ is linearly bounded and that $||S(\mu, \Delta_0^c)||\leq\frac{1}{\mu}$.
\begin{lemma}\cite{mahmudov2014approximate}\label{lemma33}
    The linear system \eqref{linsys} is said to be approximately controllable on $[0, c]$ if and only if, $\mu S(\mu, \Delta_0^c)\to 0$ when $\mu\to 0$ in the strong operator topology.
\end{lemma}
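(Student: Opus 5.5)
The plan is to work entirely with the positive self-adjoint operator $\Delta_0^c$ and its spectral calculus, and to use the standard duality characterisation of approximate controllability: the linear system \eqref{linsys} is approximately controllable on $[0,c]$ if and only if $\mathsf{C}^*\Re^*(c-s)\varphi=0$ for almost every $s\in[0,c]$ implies $\varphi=0$. Note that
$$\langle \Delta_0^c\varphi,\varphi\rangle=\int_0^c\|\mathsf{C}^*\Re^*(c-s)\varphi\|^2\,\mathrm{d}s .$$
This identity (together with strong continuity of $\Re^*$, from property (b) of Definition \ref{def21}) says the duality condition is exactly $\ker\Delta_0^c=\{0\}$, i.e. $\Delta_0^c$ is positive definite. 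So it suffices to prove that $\mu S(\mu,\Delta_0^c)\to0$ strongly if and only if $\ker\Delta_0^c=\{0\}$.

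\emph{Sufficiency.} Assume $\ker\Delta_0^c=\{0\}$. Let $E(\cdot)$ be the spectral measure of $\Delta_0^c$ on $[0,\|\Delta_0^c\|]$. For $h\in\mathbb{H}$,
$$\|\mu S(\mu,\Delta_0^c)h\|^2=\int_{[0,\|\Delta_0^c\|]}\Big(\frac{\mu}{\mu+\lambda}\Big)^2\,\mathrm{d}\langle E(\lambda)h,h\rangle .$$
The integrand is bounded by $1$ and tends to $\mathbf 1_{\{0\}}(\lambda)$ as $\mu\to0^+$. By dominated convergence the limit is $\|E(\{0\})h\|^2$. Since $E(\{0\})$ is the projection onto $\ker\Delta_0^c=\{0\}$, the limit is zero.

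\emph{Necessity.} If $\mu S(\mu,\Delta_0^c)h\to0$ for every $h$, take $h\in\ker\Delta_0^c$. Then $\mu S(\mu,\Delta_0^c)h=h$, so $h=0$, and $\Delta_0^c$ is positive definite.

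To link this with the control interpretation without quoting the duality theorem, one can argue directly as follows. For $h\in\mathbb{H}$ and $\mu>0$, put
$$u_\mu(s)=\mathsf{C}^*\Re^*(c-s)S(\mu,\Delta_0^c)\big(h-\Re(c)\vartheta_0\big).$$
Using the mild solution formula of the Grimmer theorem stated above, one checks that
$$\vartheta_\mu(c)-h=-\mu S(\mu,\Delta_0^c)\big(h-\Re(c)\vartheta_0\big).$$
Hence the strong convergence gives $\vartheta_\mu(c)\to h$, which is the sufficiency direction in control terms. Conversely, suppose the system is approximately controllable but $\Delta_0^c\varphi=0$ for some $\varphi\neq0$. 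Then $\mathsf{C}^*\Re^*(c-\cdot)\varphi=0$, so every reachable state $\vartheta(c)$ satisfies $\langle\vartheta(c)-\Re(c)\vartheta_0,\varphi\rangle=0$. The reachable set then lies in a proper closed affine subspace and cannot be dense, a contradiction.

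The main obstacle I expect is justifying that $\Delta_0^c$ is a well-defined bounded self-adjoint nonnegative operator. The integrand $s\mapsto\Re(c-s)\mathsf{C}\mathsf{C}^*\Re^*(c-s)$ is only strongly measurable, so the integral must be taken in the strong (Bochner, pointwise) sense, using the bound $\|\Re(\sigma)\|\le Me^{\beta\sigma}$ and strong continuity of $\Re^*$ (valid in Hilbert space). A second delicate point is verifying the identity for $\vartheta_\mu(c)-h$: it rests on the definition of $\Delta_0^c$, namely $\int_0^c\Re(c-s)\mathsf{C}u_\mu(s)\,\mathrm{d}s=\Delta_0^c S(\mu,\Delta_0^c)(h-\Re(c)\vartheta_0)$, combined with $I-\Delta_0^cS(\mu,\Delta_0^c)=\mu S(\mu,\Delta_0^c)$.
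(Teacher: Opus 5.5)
The paper gives no proof of this lemma; it is cited from \cite{mahmudov2014approximate}, so there is no internal argument to compare against. Your proof is correct and complete. The explicit control $u_\mu$, together with $\vartheta_\mu(c)-h=-\mu S(\mu,\Delta_0^c)\big(h-\Re(c)\vartheta_0\big)$, gives ``strong convergence $\Rightarrow$ approximate controllability''. The annihilator argument gives ``approximate controllability $\Rightarrow$ $\ker\Delta_0^c=\{0\}$''. The spectral computation then closes the loop. Your $u_\mu$ is exactly the deterministic linear prototype of the paper's $u^\mu$, and your identity is the linear case of \eqref{exp38}.

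The one step where a different route is common is $\ker\Delta_0^c=\{0\}\Rightarrow\mu S(\mu,\Delta_0^c)\to0$ strongly. Your spectral-measure argument gives the sharper fact that $\mu S(\mu,\Delta_0^c)\to P_{\ker\Delta_0^c}$ strongly in every case, but it needs self-adjointness. A spectral-free alternative runs as follows.
\begin{itemize}
\item With $z_\mu=\mu S(\mu,\Delta_0^c)h$ one has $\mu\|z_\mu\|^2+\langle\Delta_0^c z_\mu,z_\mu\rangle=\mu\langle h,z_\mu\rangle$.
\item Hence $\|z_\mu\|^2\le\langle h,z_\mu\rangle$, so $\|z_\mu\|\le\|h\|$, and also $\Delta_0^c z_\mu=\mu(h-z_\mu)\to0$.
\item Every weak cluster point of $(z_\mu)$ therefore lies in $\ker\Delta_0^c=\{0\}$, so $z_\mu\rightharpoonup0$ and $\|z_\mu\|^2\le\langle h,z_\mu\rangle\to0$.
\end{itemize}
This uses only $\langle\Delta_0^c z,z\rangle\ge0$ and injectivity. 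That is why this kind of argument carries over to settings without a spectral calculus, such as Banach-space versions of the criterion.

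One correction to how you handle your ``obstacles''. Strong continuity of $\Re$ does not in general give strong continuity of $\Re^*$. For $C_0$-semigroups the upgrade from weak to strong continuity uses the semigroup law, which $\Re$ does not satisfy. You never need it, though:
\begin{itemize}
\item The map $s\mapsto\mathsf{C}^*\Re^*(c-s)\varphi$ is weakly continuous, hence strongly measurable by Pettis' theorem since $\mathbb{Y}$ is separable.
\item The duality condition only involves vanishing for almost every $s$.
\item Define $\Delta_0^c$ through the bounded symmetric nonnegative form $\langle\Delta_0^c x,y\rangle=\int_0^c\langle\mathsf{C}^*\Re^*(c-s)x,\mathsf{C}^*\Re^*(c-s)y\rangle\,\mathrm{d}s$. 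This settles well-definedness and self-adjointness at once.
\item Pairing with an arbitrary $y$ then gives $\int_0^c\Re(c-s)\mathsf{C}u_\mu(s)\,\mathrm{d}s=\Delta_0^cS(\mu,\Delta_0^c)\big(h-\Re(c)\vartheta_0\big)$ directly.
\end{itemize}
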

\begin{lemma}\cite{dauer2004controllability}\label{lemma211}
     For any $\tilde{\vartheta}_c\in L^2(\Omega, \mathbb{H})$, there exists a function $\phi\in L_\mathcal{F}^2(\Omega;  L^2(J, L_2^0))$ so that, $$\tilde{\vartheta}_c = \mathbb{E}\tilde{\vartheta}_c + \displaystyle\int_0^c\phi(s)\mathrm{d}\mathsf{W}(s).$$
\end{lemma}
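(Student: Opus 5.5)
The plan is to prove this as the Hilbert-space version of Itô's martingale representation theorem. Throughout, $\tilde{\vartheta}_c$ is taken to be $\mathcal{F}_c$-measurable, which is implicit in the statement, and we use that $\{\mathcal{F}_\sigma\}$ is the (augmented) filtration generated by $\mathsf{W}$. By the series \eqref{wieserie}, this filtration coincides with the one generated by the countable family of mutually independent real Brownian motions $\{\gamma_k: \lambda_k>0\}$. Indices with $\lambda_k=0$ contribute nothing, so we discard them. The argument has three stages:
\begin{enumerate}
\item a scalar representation theorem for a countable family of independent Brownian motions;
\item a coordinatewise reduction from $\mathbb{H}$-valued variables to scalar ones;
\item reassembly of the scalar integrands into an $L_2^0$-valued integrand.
\end{enumerate}

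\textbf{Scalar step.} We show that every $\xi\in L^2(\Omega,\mathcal{F}_c;\mathbb{R})$ can be written as
\[
\xi=\mathbb{E}\xi+\sum_{k}\int_0^c\psi_k(s)\,\mathrm{d}\gamma_k(s),
\qquad
\sum_k\mathbb{E}\int_0^c|\psi_k(s)|^2\,\mathrm{d}s=\mathbb{E}|\xi-\mathbb{E}\xi|^2,
\]
with each $\psi_k$ progressively measurable. Let $\mathcal{K}$ be the set of $\xi$ admitting such a representation. By the Itô isometry, the map $(\psi_k)_k\mapsto\sum_k\int\psi_k\,\mathrm{d}\gamma_k$ is an isometry from the Hilbert space of square-integrable progressive sequences into $L^2(\Omega)$. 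Hence $\mathcal{K}$ is a closed linear subspace.

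$\mathcal{K}$ contains every stochastic exponential
\[
\mathcal{E}=\exp\Bigl(\sum_{k\le n}\int_0^c h_k\,\mathrm{d}\gamma_k-\tfrac12\sum_{k\le n}\int_0^c h_k^2\,\mathrm{d}s\Bigr),
\]
with $h_k$ deterministic step functions. Indeed, Itô's formula gives $\mathcal{E}=1+\sum_{k\le n}\int_0^c\mathcal{E}_s h_k(s)\,\mathrm{d}\gamma_k(s)$, and $\mathbb{E}\mathcal{E}=1$.

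These exponentials span a dense subspace of $L^2(\Omega,\mathcal{F}_c)$. Since the filtration is generated by the $\gamma_k$, it suffices to check this on cylinder functionals of finitely many increments. For those, density follows from the usual Fourier/analytic-continuation argument: if $\eta\perp$ all exponentials, then the measure $\eta\,\mathrm{d}\mathbb{P}$ has vanishing Laplace transform on every finite-dimensional projection, so $\eta=0$. Being closed and dense, $\mathcal{K}=L^2(\Omega,\mathcal{F}_c)$. I expect this density/closedness step to be the main obstacle. In particular, one must handle the countably many generators by a monotone-class passage from finitely many $\gamma_k$ to the whole family, using the martingale convergence theorem along $\sigma(\gamma_1,\dots,\gamma_n)$.

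\textbf{Vector step.} Fix an orthonormal basis $\{f_j\}$ of $\mathbb{H}$ and set $\xi_j=\langle\tilde{\vartheta}_c,f_j\rangle$. Applying the scalar step to each $\xi_j$ produces integrands $\psi_{jk}$ with
\[
\sum_{j,k}\mathbb{E}\int_0^c|\psi_{jk}|^2\,\mathrm{d}s=\sum_j\mathbb{E}|\xi_j-\mathbb{E}\xi_j|^2=\mathbb{E}\|\tilde{\vartheta}_c-\mathbb{E}\tilde{\vartheta}_c\|^2<\infty.
\]

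\textbf{Reassembly.} Define the operator $\phi(s)$ on $\mathcal{Q}^{1/2}\mathbb{Y}$ by
\[
\phi(s)e_k=\lambda_k^{-1/2}\sum_j\psi_{jk}(s)f_j .
\]
Then $\|\phi(s)\|_{L_2^0}^2=\sum_k\lambda_k\|\phi(s)e_k\|^2=\sum_{j,k}|\psi_{jk}(s)|^2$. The bound from the vector step therefore gives $\phi\in L^2_\mathcal{F}(\Omega;L^2(J,L_2^0))$, and $\phi$ is progressively measurable because each $\psi_{jk}$ is.

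Finally, since $\mathrm{d}\mathsf{W}=\sum_k\sqrt{\lambda_k}e_k\,\mathrm{d}\gamma_k$,
\[
\int_0^c\phi\,\mathrm{d}\mathsf{W}=\sum_k\int_0^c\sqrt{\lambda_k}\,\phi(s)e_k\,\mathrm{d}\gamma_k=\sum_j\Bigl(\sum_k\int_0^c\psi_{jk}\,\mathrm{d}\gamma_k\Bigr)f_j=\sum_j(\xi_j-\mathbb{E}\xi_j)f_j .
\]
The interchange of sums is justified in $L^2(\Omega,\mathbb{H})$ by the isometry of Lemma \ref{lemma21}. This yields $\tilde{\vartheta}_c=\mathbb{E}\tilde{\vartheta}_c+\int_0^c\phi(s)\,\mathrm{d}\mathsf{W}(s)$.
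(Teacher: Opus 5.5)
Your proof is correct. The paper gives no argument for this lemma: it is quoted from \cite{dauer2004controllability}, where it is the Hilbert-space martingale representation theorem for the augmented filtration of a $\mathcal{Q}$-Wiener process (cf.\ \cite{da2014stochastic}). What you have written is a self-contained reconstruction of the standard proof of that theorem. The main steps are these:
\begin{itemize}
\item the It\^o map is an isometry with closed range;
\item stochastic exponentials with deterministic step integrands lie in its range, together with the constants;
\item their span is dense, by Fourier/analytic continuation plus a $\pi$--$\lambda$ or martingale-convergence passage to all the $\gamma_k$;
\item the scalar integrands $\psi_{jk}$ are reassembled into an $L_2^0$-valued $\phi$ via $\|\phi(s)\|_{L_2^0}^2=\sum_k\lambda_k\|\phi(s)e_k\|^2$.
\end{itemize}
The citation buys brevity. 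Your route makes explicit two hypotheses that the statement as printed leaves implicit and without which it is false. First, $\tilde{\vartheta}_c$ must be $\mathcal{F}_c$-measurable, since the right-hand side always is. Second, $\{\mathcal{F}_\sigma\}$ must be the augmented filtration of $\mathsf{W}$: if it also carried a Brownian motion $B$ independent of $\mathsf{W}$, then $B(c)$ would be orthogonal to every $\int_0^c\phi\,\mathrm{d}\mathsf{W}$.

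One small correction concerns the interchange of $\sum_j$ and $\sum_k$ at the end. It is not justified by Lemma~\ref{lemma21}, which is only an upper bound and is stated with the operator norm rather than the $L_2^0$ norm. What does justify it is that the family $\bigl\{\bigl(\int_0^c\psi_{jk}\,\mathrm{d}\gamma_k\bigr)f_j\bigr\}_{j,k}$ is orthogonal in $L^2(\Omega;\mathbb{H})$, with summable squared norms, and hence unconditionally summable. The orthogonality comes from the orthonormality of the $f_j$, the It\^o isometry, and the vanishing cross-variation of $\gamma_k$ and $\gamma_l$ for $k\neq l$. Combined with the standard expansion $\int_0^c\phi\,\mathrm{d}\mathsf{W}=\sum_k\int_0^c\sqrt{\lambda_k}\,\phi(s)e_k\,\mathrm{d}\gamma_k(s)$, this yields your final identity. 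The same orthogonality, together with the fact that stochastic integrals have mean zero, is also what makes $\mathcal{K}$ an orthogonal sum of the constants and a closed subspace, hence closed.
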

A control function can now be introduced for any $\mu>0, \tilde{\vartheta}_c\in L^2(\Omega, \mathbb{H})$ by
\begin{align}
\begin{split}
 %\begin{array}{ll}
    u^\mu(\sigma,\vartheta) = & \displaystyle \mathsf{C}^*\Re^*(c-\sigma)(\mu I + \Delta_0^c)^{-1}\Big[(\mathbb{E}\tilde{\vartheta}_c - \Re(c)h(\vartheta)) + \int_0^c\phi(s)\mathrm{d}\mathsf{W}(s) \\
    & -  \displaystyle\int_0^c\Re(c-s)f(s, \vartheta(s))\mathrm{d}s \\
    & - \displaystyle\int_0^c\Re(c-s)g(s, \vartheta(s))\mathrm{d}\mathsf{W}(s)\Big].
    %\end{array}    
\end{split}
\end{align}
\section{Existence of mild solution of system \eqref{sys}}\label{sec2}
In this section, we establish the existence of a mild solution of the system \eqref{sys}.
We assume that \textbf{(R1)} and \textbf{(R2)} are true in $\mathbb{H}$ for the remainder of this article.\\

The following is the definition of the mild solution for the system \eqref{sys}. 
\begin{definition}
For any given control function $u\in L_\mathcal{F}^2(J, \mathbb{Y})$, a stochastic process $\vartheta$ is said to be a mild solution of \eqref{sys} on $J$ if $\vartheta\in\mathcal{M}(J, L^2(\Omega, \mathbb{H}))$ and satisfies 
    \begin{enumerate}
        \item[(i)] $\vartheta(\sigma), \sigma\in J$ is $\mathcal{F}_\sigma$- adapted and measurable;
        \item[(ii)] $\vartheta(\sigma), \sigma\in J$ can be writing as the following
        $$
        \vartheta(\sigma) = \Re(\sigma)h(\vartheta) + \int_0^\sigma \Re(\sigma-s)[f(s, \vartheta(s)) + Cu(s)]\mathrm{d}s + \int_0^\sigma \Re(\sigma-s)g(s,\vartheta(s))\mathrm{d}\mathsf{W}(s).
        $$
    \end{enumerate}
\end{definition}

\noindent We adopt the following weak assumptions for nonlinear functions $f, g$, and nonlocal function $h$ in order to demonstrate the existence results of mild solution:
\begin{enumerate}
\item[\textbf{(H1)}] The function $f:J\times\mathbb{H}\to\mathbb{H}$ is Carathéodory continuous. Additionally, $\tau_f\in L^1(J,\mathbb{R}^+)$ exists along with a nondecreasing continuous function $\Omega_f:\mathbb{R}^+\to \mathbb{R}^+$ such that
    $$
    \mathbb{E}||f(\sigma,\vartheta)||_\mathbb{H}^2\leq\tau_f(\sigma)\Omega_f(\mathbb{E}||\vartheta||_\mathbb{H}^2), \ a.e\ \sigma\in J, \forall \vartheta\in\mathbb{H}.
    $$
\item[\textbf{(H2)}] The function $g:J\times\mathbb{H}\to L_2^0$ is Carathéodory continuous. Additionally, a function $\tau_g\in L^1(J,\mathbb{R}^+)$ exists, along with a nondecreasing continuous function $\Omega_g:\mathbb{R}^+\to \mathbb{R}^+$ such that
    $$ \mathbb{E}||g(\sigma,\vartheta)||_{L_2^0}^2\leq\tau_g(\sigma)\Omega_g(\mathbb{E}||\vartheta||_\mathbb{H}^2), \ a.e\ \sigma\in J, \forall \vartheta\in\mathbb{H}.
    $$
\item[\textbf{(H3)}] The nonlocal term $h:\mathcal{M}\to\mathbb{H}$ is continuous, and $\zeta:J\times\mathbb{H}\to\mathbb{H}$ is a Carathéodory type function. Furthermore, there exist $\tau_\zeta\in L^1(J,\mathbb{R}^+)$ along with a nondecreasing continuous function $\Omega_\zeta: \mathbb{R}^+\to\mathbb{R}^+$ such that
$$
\mathbb{E}||\zeta(\sigma,\vartheta)||_\mathbb{H}^2\leq\tau_\zeta(\sigma)\Omega_\zeta(\mathbb{E}||\vartheta||_\mathbb{H}^2), \ a.e\ \sigma\in J, \forall \vartheta\in\mathbb{H}.
$$
\end{enumerate}
Let $M=\sup_{\sigma\in [0,c]}\|\Re(\sigma)\|$ and $M_C = \|C\|$.\\
First of all, based on the above assumptions, we then provide a few characteristics of the control $u^\mu$ which is defined above.

\begin{lemma}\label{lemma35}
    If the hypotheses (\textbf{H1})-(\textbf{H3}) are met, then the following conclusions hold for any $\vartheta\in B_r$.
    \begin{enumerate} 
        \item[(i)] $u^\mu(\sigma,\vartheta)$ is continuous in $B_r$;
        \item[(ii)] $\mathbb{E}||u^\mu(\sigma,\vartheta)||^2\leq K_u$,
         where 
        \begin{align*}
        \begin{array}{ll}
        K_u = &\frac{4M_C^2}{\mu^2}M^2\Big(2\mathbb{E}\|\tilde{\vartheta}_c||^2 + 2Tr(Q)\displaystyle\int_0^c\mathbb{E}\|\psi(s)||^2\mathrm{d}s + cM^2\Bigg[\Omega_\zeta(r)||\tau_\zeta||_{L^1([0,c],\mathbb{R}^{+})}\\
        &\\
        &+ \Omega_f(r)||\tau_f\|_{L^1([0,c],\mathbb{R}^{+})} + Tr(Q)c\Omega_\zeta(r)||\tau_\zeta||_{L^1([0,c],\mathbb{R}^{+})}\Bigg]\Big).
        \end{array}
        \end{align*}
%        with $a_0 = cM^2$ and $a_1 = cM^2$.
    \end{enumerate}
\end{lemma}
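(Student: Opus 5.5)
We read $B_r=\{\vartheta\in\mathcal{M}(J,L^2(\Omega,\mathbb{H})): \sup_{\sigma\in J}\mathbb{E}\|\vartheta(\sigma)\|^2\le r\}$. The proof will be a direct estimate from the explicit formula for $u^\mu$, treating the operator $\mathsf{C}^*\Re^*(c-\sigma)(\mu I+\Delta_0^c)^{-1}$ as a fixed bounded linear map applied to the bracketed random element. Its norm is at most $M_CM/\mu$, using $\|\Re^*(c-\sigma)\|=\|\Re(c-\sigma)\|\le M$, $\|\mathsf{C}^*\|=M_C$ and $\|S(\mu,\Delta_0^c)\|\le 1/\mu$. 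This gives the prefactor $M_C^2M^2/\mu^2$ in $K_u$. Throughout, $\psi$ in the statement is the function $\phi$ of Lemma \ref{lemma211}.

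For (ii) we split the bracket into four groups:
\[
\mathbb{E}\tilde{\vartheta}_c+\int_0^c\phi\,\mathrm{d}\mathsf{W},\qquad \Re(c)h(\vartheta),\qquad \int_0^c\Re(c-s)f(s,\vartheta(s))\,\mathrm{d}s,\qquad \int_0^c\Re(c-s)g(s,\vartheta(s))\,\mathrm{d}\mathsf{W}(s).
\]
The elementary inequality $\|\sum_{i=1}^4 a_i\|^2\le 4\sum\|a_i\|^2$ produces the factor $4$.

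The first group is split once more with $(a+b)^2\le 2a^2+2b^2$. We use $\|\mathbb{E}\tilde\vartheta_c\|^2\le\mathbb{E}\|\tilde\vartheta_c\|^2$ (Jensen) and apply Lemma \ref{lemma21}, or the Itô isometry, to the stochastic integral. This yields $2\mathbb{E}\|\tilde\vartheta_c\|^2+2Tr(\mathcal{Q})\int_0^c\mathbb{E}\|\psi\|^2$.

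For the nonlocal term, $\mathbb{E}\|\Re(c)h(\vartheta)\|^2\le M^2\,\mathbb{E}\|\int_0^c\zeta(s,\vartheta(s))\,\mathrm{d}s\|^2$. Cauchy–Schwarz in $s$ bounds this by $cM^2\int_0^c\mathbb{E}\|\zeta(s,\vartheta(s))\|^2\,\mathrm{d}s$. Then (H3), with monotonicity of $\Omega_\zeta$ and $\mathbb{E}\|\vartheta(s)\|^2\le r$, gives $cM^2\Omega_\zeta(r)\|\tau_\zeta\|_{L^1}$.

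The $f$-term is handled identically using (H1), giving $cM^2\Omega_f(r)\|\tau_f\|_{L^1}$.

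The $g$-term is bounded by Lemma \ref{lemma21} and (H2), giving $Tr(\mathcal{Q})M^2\Omega_g(r)\|\tau_g\|_{L^1}$. This is at most $cTr(\mathcal{Q})M^2\Omega_g(r)\|\tau_g\|_{L^1}$ when $c\ge 1$; otherwise the constant changes harmlessly. I take the $\zeta$ in the last term of the displayed $K_u$ to be a typo for $g$. Collecting the terms gives $K_u$, independent of $\sigma$ and of $\vartheta\in B_r$.

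For (i), take $\vartheta_n\to\vartheta$ in $B_r$. Since the prefactor operator is bounded, it suffices to show that each term of the bracket converges in $L^2(\Omega,\mathbb{H})$.
\begin{itemize}
\item The $h$-term converges by continuity of $h$ in (H3).
\item For the $f$-term, the Carathéodory continuity of $f$ gives $f(s,\vartheta_n(s))\to f(s,\vartheta(s))$ in measure/along subsequences. The bound $\tau_f(s)\Omega_f(r)$ from (H1) is uniform and integrable, so the Lebesgue dominated convergence theorem gives $\int_0^c\mathbb{E}\|f(s,\vartheta_n(s))-f(s,\vartheta(s))\|^2\,\mathrm{d}s\to0$. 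Cauchy–Schwarz then transfers this to the integral term.
\item The $g$-term follows the same way, using Lemma \ref{lemma21}.
\end{itemize}

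The main obstacle is precisely this convergence step. One must pass from pointwise continuity of $f(s,\cdot)$ on $\mathbb{H}$ to convergence of the expectations; mean-square convergence of $\vartheta_n(s)$ only yields a.s. convergence along a subsequence. I would first argue via a subsequence and then apply the subsequence principle to recover convergence of the whole sequence. Uniform integrability is supplied by the domination coming from (H1)–(H2). Finally, continuity in $\sigma$ of $\Re^*(c-\sigma)$ is not needed for (i), since continuity is asserted in $\vartheta$ for fixed $\sigma$.
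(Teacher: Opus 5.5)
Your proposal is essentially the paper's own proof. It uses the same four-term split with constant $4$ (and $2$ inside the first group), Cauchy--Schwarz for the $\zeta$- and $f$-integrals, Lemma~\ref{lemma21} for the two stochastic integrals, and continuity of $h$ plus dominated convergence for (i). Your readings of the statement are also right: $\psi=\phi$, and the last $\zeta$ should be $g$. Since the It\^o estimate produces no factor $c$ on the $g$-term, the literal $K_u$ (which carries $c^2$ there, and the paper's intermediate line already has an unexplained $M^4c$) is only justified for $c\ge 1$.

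The one step to tighten is your uniform-integrability remark, which the paper skips altogether. A bound on $\mathbb{E}\|f(s,\vartheta)\|^2$ alone does not make $\|f(s,\vartheta_n(s))\|^2$ uniformly integrable. However, (H1) in the form you use for (ii), i.e.\ for random arguments on the non-atomic space, can be tested on $x\mathbf{1}_E$ with $\mathbb{P}(E)=\|x\|^{-2}$ for $\|x\|\ge 1$. This gives the pathwise bound $\|f(s,x)\|^2\le\tau_f(s)\Omega_f(1)(1+\|x\|^2)$. Then $L^1$-convergence of $\|\vartheta_n(s)\|^2$ supplies the uniform integrability your subsequence argument needs, and the same works for $g$.
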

\begin{proof}
We go through the following steps to prove the two points: (i) and (ii).
    \begin{step}
For $\sigma\in J$ and $\vartheta\in B_r$, by using the H\"older inequality, hypotheses \textbf{(H1)}-\textbf{(H3)} and Lemma \textcolor{blue}{\ref{lemma21}}, we obtain:
     \allowdisplaybreaks
        \begin{align*}
            \mathbb{E}\|u^\mu(\sigma,\vartheta)\|^2 \leq&\displaystyle 4\mathbb{E}\|\mathsf{C}^*\Re^*(c-\sigma)(\mu I + \Delta_0^c)^{-1}\Big(\mathbb{E}\tilde{\vartheta}_c + \int_0^c\phi(s)\mathrm{d}\mathsf{W}(s)\Big)||^2\\ 
            &\\
            & + 4\mathbb{E}||\mathsf{C}^*\Re^*(c-\sigma)(\mu I + \Delta_0^c)^{-1}\Re(c)h(\vartheta)||^2\\
            &\\
             & + \displaystyle 4\mathbb{E}\|\mathsf{C}^*\Re^*(c-\sigma)\int_0^c(\mu I + \Delta_s^c)^{-1}\Re(c-s)f(s, \vartheta(s))\mathrm{d}s||^2\\
             &\\
            & + \displaystyle 4\mathbb{E}||C^*\Re^*(c-\sigma)\int_0^c(\mu I + \Delta_s^c)^{-1}\Re(c-s)g(s, \vartheta(s))\mathrm{d}\mathsf{W}(s)||^2\\
            &\\
            &\leq \frac{4M_C^2}{\mu^2}M^2\Big(2\mathbb{E}||\tilde{\vartheta}_c||^2 + \displaystyle 2Tr(Q)\int_0^c\mathbb{E}||\phi(s)||^2\mathrm{d}s\Big)\\
            &\\
            & + \frac{4M_C^2}{\mu^2}M^4c\Omega_\zeta(r)||\tau_\zeta||_{L[0,c]} + \frac{4M_C^2}{\mu^2}M^4c\Omega_f(r)||\tau_f||_{L^1([0,c],\mathbb{R}^{+})}\\
            &\\
            &+ \frac{4Tr(Q)M_C^2}{\mu^2}M^4c\Omega_g(r)||\tau_g||_{L^1([0,c],\mathbb{R}^{+})}\\
            &\\
            & = \frac{4M_C^2}{\mu^2}M^2\Big(2\mathbb{E}\|\tilde{\vartheta}_c||^2 + 2Tr(Q)\displaystyle\int_0^c\mathbb{E}\|\psi(s)||^2\mathrm{d}s + cM^2\Bigg[\Omega_\zeta(r)||\tau_\zeta||_{L^1([0,c],\mathbb{R}^{+})}\\
        &\\
        &+ \Omega_f(r)||\tau_f\|_{L^1([0,c])} + Tr(Q)\textcolor{blue}{c}\Omega_\zeta(r)||\tau_\zeta||_{L^1([0,c],\mathbb{R}^{+})}\Bigg]\Big)\\
            &\\
            &=K_u,
        \end{align*}
        which implies that \textbf{(ii)} is satisfied.
    \end{step}
   \begin{step}
    Now, we prove that \textbf{(i)} is satisfied. Suppose $\vartheta_n\to \vartheta$ in $B_r$, then we get from \textbf{(H1)-(H3)} 
    $$
    f(\sigma, \vartheta_n(\sigma))\to f(\sigma,\vartheta(\sigma)),\quad g(\sigma, \vartheta_n(\sigma))\to g(\sigma,\vartheta(\sigma)),\quad \zeta(\sigma, \vartheta_n(\sigma))\to \zeta(\sigma, \vartheta(\sigma))\ as\ n\to\infty.
    $$
    Additionally, by using the Lebesgue dominated convergence theorem and the H\"older inequality, for any $\sigma\in J$, we can obtain
    \allowdisplaybreaks
    \begin{align*}
        &\mathbb{E}||\mathsf{C}^*\Re^*(c-\sigma)\int_0^\sigma(\mu I + \Delta_s^c)^{-1}\Re(\sigma-s)[f(s,\vartheta_n(s))-f(s,\vartheta(s))]\mathrm{d}s\|^2\\
        &\leq\frac{M_C^2}{\mu^2}M^4c\int_0^\sigma\mathbb{E}\|f(s, \vartheta_n(s))-f(s, \vartheta(s))||^2\mathrm{d}s\\
        %&\leq\frac{M_C^2}{\mu^2}M^4c\int_0^\sigma\mathbb{E}\|f(s, v_n(s))-f(s, \vartheta(s))\|^2\mathrm{d}s\\
        &\to 0\ (n\to\infty).
    \end{align*}
  On the other hand, using Lemma \ref{lemma21}, the Lebesgue dominated convergence theorem and the H\"older inequality, we arrive at,
    \begin{equation}
    \begin{array}{ll}
      &\displaystyle\mathbb{E}\|\mathsf{C}^*\Re^*(c-\sigma)\int_0^\sigma(\mu I + \Delta_s^c)^{-1}\Re(\sigma-s)[g(s, \vartheta_n(s)) - g(s, \vartheta(s))]\mathrm{d}\mathsf{W}(s)\|^2\\ 
        &\\
        &\displaystyle\leq\frac{Tr(Q)M_C^2M^4}{\mu^2}\int_0^\sigma\mathbb{E}||g(s, \vartheta_n(s)) - g(s, \vartheta(s))||^2\mathrm{d}s\\
        &\\
        &\to 0\ (n\to\infty).
        \end{array}
    \end{equation}
Furthermore, by hypothesis \textbf{(H3)}, we obtain that 
\begin{equation}
    \begin{array}{ll}
      &\displaystyle\mathbb{E}\|\mathsf{C}^*\Re^*(c-\sigma)(\mu I + \Delta_\sigma^c)^{-1}\Re(c)[h(\vartheta_n) - h(\vartheta)]\|^2\\ 
        &\\
        &\displaystyle\leq\frac{M_C^2M^4}{\mu^2}\mathbb{E}||h(\vartheta_n) - h(\vartheta)||^2\mathrm{d}s\\
        &\\
        &\to 0\ (n\to\infty).
        \end{array}
    \end{equation}
    The following outcome follows from the inequality we obtained above:
     \begin{equation*}
         \begin{array}{ll}
         &\displaystyle\mathbb{E}||u^\mu(\sigma, \vartheta_n) - u^\mu(\sigma, \vartheta)||^2\\
         &\\
         \displaystyle\leq &3\mathbb{E}||\mathsf{C}^*\Re^*(c-\sigma)(\mu I + \Delta_s^c)^{-1}\Re(c)[h(\vartheta_n) - h(\vartheta)]||^2\\
         &\\
         & + 3\displaystyle\mathbb{E}||\mathsf{C}^*\Re^*(c-\sigma)\int_0^c(\mu I + \Delta_s^c)^{-1}\Re(c)[f(s, \vartheta_n) - f(s, \vartheta(s))]\mathrm{d}s||^2\\
         &\\
         & + 3\mathbb{E}||C^*\Re^*(c-\sigma)\displaystyle\int_0^c(\mu I + \Delta_s^c)^{-1}\Re(\sigma - s)[g(s, \vartheta_n) - g(s, \vartheta(s)]\mathrm{d}W(s)||^2\\
         &\\
         &\to 0\ (n\to\infty).
         \end{array}
    \end{equation*}
    Therefore, $u^\mu(\sigma, \vartheta)$ is continuous in $B_r$.
   \end{step}  
 This completes the proof.
\end{proof}
We assume that the following hypothesis is true in order to discuss the controllability of the system \eqref{sys}.
\begin{enumerate}
        \item[\textbf{(H4)}] There is a constant $\gamma\in (0,c)$ such that, for all $\sigma\in [\gamma, c]$, 
        \begin{align*}
        \begin{array}{ll}
            & f(\sigma,\vartheta_1(\sigma)) = f(\sigma, \vartheta_2(\sigma)), \\
            & g(\sigma, \vartheta_1(\sigma)) = g(\sigma, \vartheta_2(\sigma)), \\
            & \zeta(\sigma, \vartheta_1(\sigma)) = \zeta(\sigma, \vartheta_2(\sigma)),
        \end{array}
        \end{align*}
        where $\vartheta_1, \vartheta_2\in\mathcal{M}(J, L^2(\Omega, \mathbb{H}))$ with $\vartheta_1(\sigma)=\vartheta_2(\sigma),\sigma\in [\gamma, c].$
    \end{enumerate} 
\begin{theorem}\label{theorem36}
     Suppose that the hypotheses (\textbf{H1})-(\textbf{H3}) are met, and that \textbf{(H4)} is satisfied. Following that, the global issue \eqref{sys} has at least one mild solution in $B_r$ if and only if there is a positive constant $r$ that satisfies the following condition:
    \begin{align}\label{equa28}
    \begin{array}{ll}
       3M^2c\Omega_\zeta(r)||\tau_\zeta||_{L[0,c]}& + 6M^2c\Big(2\Omega_f(r)||\tau_f||_{L[0,c]} + M_C^2cK_u\Big)\\ 
       &\\
       &+ 3Tr(Q)c^{1/2}\Omega_g(r)||\tau_g||_{L^1([0,c])}\leq r.
    \end{array}
    \end{align}
\end{theorem}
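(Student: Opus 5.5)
We will prove the sufficiency direction, which is the content actually used later. The "only if" reading of the statement is not provable as written; at most one can note that a fixed point in $B_r$ supplies the a priori bound. The plan follows the strategy of Ding and Li \cite{ding2020approximat}: first solve an auxiliary problem in which the nonlocal term is frozen away from zero, then apply Schauder's theorem, and finally use \textbf{(H4)} to identify the fixed point as a genuine mild solution.

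\textbf{Step 1: the solution operator.} For fixed $\mu>0$, define $\mathcal{P}:B_r\to\mathcal{M}(J,L^2(\Omega,\mathbb{H}))$ by
\begin{align*}
(\mathcal{P}\vartheta)(\sigma)=\Re(\sigma)h(\vartheta)+\int_0^\sigma \Re(\sigma-s)\big[f(s,\vartheta(s))+Cu^\mu(s,\vartheta)\big]\mathrm{d}s+\int_0^\sigma\Re(\sigma-s)g(s,\vartheta(s))\mathrm{d}\mathsf{W}(s),
\end{align*}
where $B_r=\{\vartheta\in\mathcal{M}:\sup_{\sigma\in J}\mathbb{E}\|\vartheta(\sigma)\|^2\le r\}$.

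\textbf{Step 2: $\mathcal{P}(B_r)\subset B_r$.} Split $\mathbb{E}\|(\mathcal{P}\vartheta)(\sigma)\|^2$ into its terms with the elementary inequality $(a_1+\dots+a_k)^2\le k\sum a_i^2$. Bound the terms as follows:
\begin{itemize}
\item[] the nonlocal term by $M^2\mathbb{E}\|h(\vartheta)\|^2\le M^2c\,\Omega_\zeta(r)\|\tau_\zeta\|_{L^1}$, using Hölder and \textbf{(H3)};
\item[] the $f$ and control terms by Hölder, \textbf{(H1)}, and Lemma \ref{lemma35}(ii);
\item[] the stochastic term by Lemma \ref{lemma21} and \textbf{(H2)}.
\end{itemize}
Condition \eqref{equa28} is arranged precisely so that these bounds sum to at most $r$. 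Continuity of $\mathcal{P}$ on $B_r$ then follows from Lemma \ref{lemma35}(i), the Carathéodory continuity in \textbf{(H1)}--\textbf{(H3)}, the continuity of $h$, and dominated convergence, exactly as in the proof of Lemma \ref{lemma35}.

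\textbf{Step 3: compactness, which is the main obstacle.} The nonlocal term $\Re(\sigma)h(\vartheta)$ is not compact near $\sigma=0$, since $h$ is only continuous. To get around this, fix $\gamma$ from \textbf{(H4)} and, for $\vartheta\in B_r$, define $\tilde\vartheta$ to equal $\vartheta$ on $[\gamma,c]$ and to be frozen at $\vartheta(\gamma)$ on $[0,\gamma]$.

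We then work with the modified operator $\tilde{\mathcal{P}}\vartheta=\mathcal{P}\tilde\vartheta$, restricted to the set $\{\vartheta(\cdot)|_{[\gamma,c]}\}$ in $C([\gamma,c],L^2(\Omega,\mathbb{H}))$. By \textbf{(H4)}, the quantities $h$, $f$, $g$ (and hence $u^\mu$) evaluated at $\vartheta$ depend only on $\vartheta|_{[\gamma,c]}$. On $[\gamma,c]$ we have $\sigma\ge\gamma>0$, so $\Re(\sigma)$ is compact by Lemma \ref{lemma26} and norm continuous by Lemma \ref{lemma27}.

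Equicontinuity is obtained by the standard splitting for $\gamma\le\sigma_1<\sigma_2$:
\begin{itemize}
\item[] the nonlocal term is controlled by $\|\Re(\sigma_2)-\Re(\sigma_1)\|\to0$;
\item[] each integral term is split into an integral over $[\sigma_1,\sigma_2]$, which is small by absolute continuity of $\tau_f,\tau_g$, and an integral over $[0,\sigma_1-\epsilon]$ involving $\Re(\sigma_2-s)-\Re(\sigma_1-s)$, which is small by norm continuity.
\end{itemize}
Relative compactness of $\{(\mathcal{P}\vartheta)(\sigma)\}$ for each fixed $\sigma$ is obtained through the factorization of Lemma \ref{lemma28}. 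One writes $\Re(\sigma-s)\approx\Re(\epsilon)\Re(\sigma-s-\epsilon)$ with error $O(\epsilon)$, and the compactness of $\Re(\epsilon)$ applied to a bounded set makes the approximating sets relatively compact. The Arzelà--Ascoli theorem then gives that $\tilde{\mathcal{P}}$ is compact.

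\textbf{Step 4: conclusion.} Schauder's fixed point theorem yields a fixed point $v$ on $[\gamma,c]$. Set $\vartheta=\mathcal{P}\tilde v$ on all of $J$. This $\vartheta$ coincides with $v$ on $[\gamma,c]$, so \textbf{(H4)} gives $h(\vartheta)=h(\tilde v)$, and likewise for $f$, $g$, and $u^\mu$. Hence $\vartheta=\mathcal{P}\vartheta$ on $J$, so $\vartheta$ is a mild solution lying in $B_r$. The technical verification I expect to be the most delicate is that \textbf{(H4)} really does make all the terms independent of $\vartheta|_{[0,\gamma)}$, including the integrals of $f$ and $g$ over $[0,\gamma]$.
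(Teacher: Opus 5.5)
Your proposal is essentially the paper's own proof. Like you, the paper establishes only the ``if'' direction: it works on $B_r(\gamma)\subset\mathcal{M}([\gamma,c],L^2(\Omega,\mathbb{H}))$ with $f^*,g^*,h^*$ defined through an extension $\bar\vartheta\in B_r$, gets the self-map property from \eqref{equa28}, and gets compactness from the compactness and norm continuity of $\Re(\sigma)$ for $\sigma\ge\gamma$ together with the factorization of Lemma~\ref{lemma28} and Arzel\`a--Ascoli. It then extends the Schauder fixed point back to $[0,c]$ via \textbf{(H4)}.

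The step you flag as delicate is simply asserted in the paper (it states that $f^*,g^*,h^*$ are well defined by \textbf{(H1)}--\textbf{(H4)}). That assertion requires reading \textbf{(H4)} with $\sigma$ ranging over all of $J$, since as literally stated on $[\gamma,c]$ it is a tautology; under that reading your Step~4 closes exactly as the paper's does. Note, though, that your argument and the paper's share one unjustified step: passing from compactness of $\Re(\epsilon)$ on $\mathbb{H}$ to relative compactness of the relevant sets in $L^2(\Omega,\mathbb{H})$, which fails for general bounded families of random variables.
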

\begin{proof}
   Whenever $r>0$, define 
    $$
    B_r(\gamma) = \Big\{v\in\mathcal{M}([\gamma, c], L^2(\Omega,\mathbb{H}));\  \mathbb{E}||\vartheta(\sigma)||^2\leq r, \forall \sigma\in [\gamma, c]\Big\}.
    $$
   It is clear that for each $\vartheta\in B_r(\gamma)$, a function $\bar{\vartheta}\in B_r$ exists such that $\vartheta(\sigma)=\bar{\vartheta}(\sigma),\sigma\in [\gamma, c]$.\\
  %It is  easily seen that for each $\vartheta\in B_r(\gamma)$, there exists a function $\bar{\vartheta}\in B_r$ satisfying $\vartheta(\sigma)=\bar{\vartheta}(\sigma),\sigma\in [\gamma, c]$ is fulfilled
  %obvious that there is a function $\vartheta\in B_r(\gamma)$ so that for any $w\in B_r$, $\vartheta(\sigma)=w(\sigma),\sigma\in [\gamma, c]$ is fulfilled.\\
  Let define the following mappings on $B_r(\gamma)$:
    $$
    (f^*\vartheta)(\sigma) =f(\sigma, \bar{\vartheta}(\sigma)),\ (g^*\vartheta)(\sigma)=g(\sigma,\bar{\vartheta}(\sigma)),\ \sigma\in [0,c],\ \text{and}\ h^*(\vartheta)=h(\bar{\vartheta}).
     $$
     Then, it is simple to verify that $f^*, g^*, h^*$ is well-defined on $B_r(\gamma)$ and continuous by using hypothesis \textbf{(H1)-(H4)}.  Furthermore, assume that the following estimations are met:
\begin{align}\label{equa29}
    \begin{array}{ll}
         &\mathbb{E}||(f^*\vartheta)(\sigma)||^2\leq\tau_f(\sigma)\Omega_f(\mathbb{E}||\vartheta||^2),\ a.e\ \sigma\in J,\ \forall \vartheta\in B_r(\gamma), \\ 
         &\\
         &\mathbb{E}||(g^*\vartheta)(\sigma)||_{L_2^0}^2\leq\tau_g(\sigma)\Omega_g(\mathbb{E}||\vartheta||^2),\ a.e\ \sigma\in J,\ \forall \vartheta\in B_r(\gamma), \\
         &\\
          &\mathbb{E}||(h^*\vartheta)(\sigma)||^2\leq c\Omega_\zeta(r)(||\tau_\zeta||_{L[0,c]}),\ \forall \vartheta\in B_r(\gamma).
         \end{array}
     \end{align}     
     For any $\mu >0$, define an operator $\Psi_\gamma$ on $B_r(\gamma)$ as follows:
     \begin{align*}
         \begin{array}{ll}
              (\Psi_\gamma \vartheta)(\sigma) & = \Re(\sigma)h^*(\vartheta) + \displaystyle\int_0^\sigma \Re(\sigma -s)[(f^*\vartheta)(s) + C\Tilde{u}^\mu(s,\vartheta)]\mathrm{d}s \\
              & + \displaystyle\int_0^\sigma \Re(\sigma-s)(g^*\vartheta)(s)\mathrm{d}\mathsf{W}(s), \sigma\in [\gamma, c],
         \end{array}
     \end{align*}
     where the control $\Tilde{u}^\mu(\sigma,\vartheta)$ is defined by 
\begin{align*}
    \begin{array}{ll}
         \tilde{u}^\mu(\sigma,\vartheta) = & \displaystyle \mathsf{C}^*\Re^*(c-\sigma)(\mu I + \Delta_0^c)^{-1}\Big[\mathbb{E}\tilde{\vartheta}_c - \Re(c)g^*(\vartheta) + \int_0^c\phi(s)\mathrm{d}\mathsf{W}(s) \\
         &\\
         & -  \displaystyle \int_0^c\Re(c-s)f^*(s, \vartheta(s))\mathrm{d}s - \displaystyle \int_0^c\Re(c-s)g^*(\vartheta)(s)\mathrm{d}\mathsf{W}(s)\Big].
    \end{array}
\end{align*}
It's easy to see that $\Tilde{u}^\mu(\sigma,\vartheta)$ satisfies the results of the Lemma \ref{lemma211}.\\
For the rest of the proof, we use Schauder's fixed point Theorem to prove that $\Psi_\gamma$ has a fixed point. We go through four steps to take this:
\begin{step}
   We start by asserting that there is a positive number $r$ such that $\Psi_\gamma$ maps $B_r(\gamma)$ into itself, i.e., there is a positive number $r=r(\gamma)$ for each $\gamma>0$ such that $\Psi_\gamma(B_r)\subset B_r$. For all $\vartheta\in B_r(\gamma)$ and $\sigma\in [\gamma, c]$, it follows from \eqref{equa29} and \eqref{equa28}, Lemma \ref{lemma26} and H\"older's inequality that
   \allowdisplaybreaks
   \begin{align*}
             \mathbb{E}||(\Psi_\gamma \vartheta)(\sigma)||^2&\leq 3\mathbb{E}||\Re(\sigma)h^*(\vartheta)||^2 + 3\mathbb{E}||\displaystyle\int_0^\sigma \Re(\sigma-s)[(f^*\vartheta)(s) + \mathsf{C}\Tilde{u}^\mu(s,\vartheta)]\mathrm{d}s||^2  \\
             &\\
             &+ 3\mathbb{E}||\displaystyle\int_0^\sigma \Re(\sigma-s)(g^*\vartheta)(s)\mathrm{d}\mathsf{W}(s)||^2\\
             &\\
             &\leq 3M^2c\Psi_\zeta(r)||\tau_\zeta||_{L[0,c]} + 3M^2\displaystyle\int_0^\sigma\mathbb{E}||(f^*\vartheta)(s) + \mathsf{C}\Tilde{u}^\mu(s, \vartheta)||^2\mathrm{d}s\\
             &\\
             & + 3Tr(Q)M^2\displaystyle\int_0^\sigma\mathbb{E}||(g^*\vartheta)(s)||_{L_2^0}^2\mathrm{d}s\\
             &\\
             &\leq 3M^2c\Omega_\zeta(r)||\tau_\zeta||_{L[0, c]} + 6M^2c\Bigg(\Omega_f(r)||\tau_f||_{L([0,c])} + M_C^2cK_u\Bigg)\\ \\ & + 3Tr(Q)\Omega_g(r)c^{1/2}||\tau_g||_{L^1([0,1])}\\
             &\\
             &\leq r,
    \end{align*}
which implies that $\Psi_\gamma$ maps $B_r(\gamma)$ into itself.
\end{step}
\begin{step}
We establish that the operator $\Psi_\gamma:B_r(\gamma)\to B_r(\gamma)$ is continuous. Let $v_n, \vartheta\in B_r(\gamma)$ and $\vartheta_n\to \vartheta(n\to\infty)$, then for any $\sigma\in [\gamma, c]$, we have \begin{align*}
        \begin{array}{ll}
             \mathbb{E}||(\Psi_\gamma \vartheta_n)(\sigma) - (\Psi_\gamma \vartheta)(\sigma)||^2&\leq 4\displaystyle\mathbb{E}||\Re(\sigma)(h^*(\vartheta_n) - h^*(\vartheta))||^2\\ 
             &\\
             & + 4\displaystyle\mathbb{E}||\int_0^\sigma \Re(\sigma -s)[(f^*\vartheta_n)(s) - (f^*\vartheta)(s)]\mathrm{d}s||^2\\
             &\\
             & +\displaystyle 4\mathbb{E}||\int_0^\sigma \Re(\sigma-s)[(g^*\vartheta_n)(s) - (g^*\vartheta)(s)]\mathrm{d}\mathsf{W}(s)||^2\\
             &\\
             & + 4\mathbb{E}||\displaystyle\int_0^\sigma C\Re(\sigma -s)[u^\mu(s, \vartheta_n) - u^\mu(s, \vartheta)]\mathrm{d}s||^2.
        \end{array}
    \end{align*}
    Then by the continuity of the nonlocal function $h^*$ and nonlinear functions $f^*, g^*$, and using Lemma \ref{lemma211} and the Lebesgue dominated convergence theorem, we obtain
    $$
     \mathbb{E}||(\Psi_\gamma \vartheta_n)(\sigma) - (\Psi_\gamma \vartheta)(\sigma)||^2\to 0(n\to\infty), \sigma\in [\gamma, c].
    $$
    That implies, $||(\Psi_\gamma \vartheta_n)(\sigma) - (\Psi_\gamma \vartheta)(\sigma)||_\mathcal{M}\to\ 0(n\to\infty)$, mean that $\Psi_\gamma$ is continuous in $B_r(\gamma)$.
\end{step}
\begin{step}\label{step363}
 We show that $\Psi_\gamma$ is a completely continuous operator on $B_r(\gamma)$. Since the compactness of the resolvent operator $\Re(\sigma),\sigma>0$ implies that $\{\Re(\sigma)h^*(\vartheta): \vartheta\in B_r(\gamma)\}$ is pre-compact in $\mathbb{H}$ for each $\sigma\in [\gamma, c]$. So, by using Lemma \ref{lemma26}, we obtain that $\{\Re(\cdot)h^*(\vartheta): \vartheta\in B_r(\gamma)\}$ is equicontinuous.\\
 Additionally, assume $\sigma\in [\gamma, c]$ be fixed, $\forall\alpha\in (0, \sigma)$ and $\forall\gamma >0$ define an operator $\Psi^\alpha$ on $B_r(\gamma)$ using the formula:\\
 \allowdisplaybreaks
  \begin{align*}
      \begin{array}{ll}
           (\Psi^\alpha \vartheta)(\sigma) & = \displaystyle\int_0^{\sigma -\alpha} \Re(\sigma -s)[(f^*\vartheta)(s) + \mathsf{C}\Tilde{u}^\mu(s,\vartheta)]\mathrm{d}s 
            + \displaystyle\int_0^{\sigma -\alpha}\Re(\sigma -s)(g^*\vartheta)(s)\mathrm{d}\mathsf{W}(s)\\
           &\\
           &=\displaystyle \Re(\alpha)\int_0^{\sigma -\alpha}\Re((\sigma -s) - \alpha )[(f^*\vartheta)(s) + \mathsf{C}\Tilde{u}^\mu(s,\vartheta)]\mathrm{d}s  \\
           &\\
           & + \displaystyle \Re(\alpha)\int_0^{\sigma -\alpha}\Re((\sigma -s) - \alpha)(g^*\vartheta)(s)\mathrm{d}\mathsf{W}(s).
      \end{array}
  \end{align*}
  Then, by the compactness of $\Re(\alpha)$, the set $\{(\Psi^\alpha \vartheta)(\sigma): \vartheta\in B_r(\gamma)\}$ is relatively compact in $\mathbb{H}$. We denote
  \begin{align*}
      \begin{array}{ll}
           (\Psi_1^\alpha \vartheta)(\sigma) =\displaystyle\int_0^{\sigma -\alpha} \Re(\sigma -s)[(f^*\vartheta)(s) + \mathsf{C}\Tilde{u}^\mu(s,\vartheta)]\mathrm{d}s  + \displaystyle\int_0^{\sigma -\alpha}\Re(\sigma -s)(g^*\vartheta)(s)\mathrm{d}\mathsf{W}(s),
      \end{array}
  \end{align*}
  for any $\vartheta\in B_r(\gamma)$. Using \eqref{equa29}, the Lemmas \ref{lemma21} and \ref{lemma28}, and H\"older inequality, we obtain
 \allowdisplaybreaks
\begin{align*}
           &\mathbb{E}||(\Psi_1^\alpha \vartheta)(\sigma)  - (\Psi^\alpha \vartheta)(\sigma)||^2\\
           &\\
           &= \displaystyle \mathbb{E}||\int_0^{\sigma -\alpha}[\Re(\alpha)\Re((\sigma -s) -\alpha) - \Re(\sigma -s)]\Big((f^*\vartheta)(s) + \mathsf{C}\Tilde{u}^\mu(s,\vartheta)\Big)\mathrm{d}s\\
           &\\
            & + \displaystyle\int_0^{\sigma -\alpha}[\Re(\alpha)\Re((\sigma -s) -\alpha) - \Re(\sigma-s)](g^*\vartheta)(s)\mathrm{d}\mathsf{W}(s)||^2\\
            &\\
           &\leq 4\mathbb{E}||\displaystyle\int_0^\sigma[\Re(\alpha)\Re((\sigma -s) -\alpha) -\Re(\sigma -s)]\Big((f^*\vartheta)(s) + \mathsf{C}\Tilde{u}^\mu(s,\vartheta)\Big)\mathrm{d}s||^2 \\
           &\\
           & + 4\mathbb{E}||\displaystyle\int_{\sigma-\alpha}^\sigma[\Re(\alpha)\Re((\sigma-s) -\alpha) -\Re(\sigma-s)]\Big((f^*\vartheta)(s) + \mathsf{C}\Tilde{u}^\mu(s,\vartheta)\Big)\mathrm{d}s||^2 \\
           &\\
           & + \mathbb{E}||\displaystyle\int_0^\sigma[\Re(\alpha)\Re((\sigma -s) -\alpha) - \Re(\sigma -s)](g^*\vartheta)(s)\mathrm{d}\mathsf{W}(s)||^2 \\
           &\\
           &+ \mathbb{E}||\displaystyle\int_{\sigma - \alpha}^\sigma[\Re(\alpha)\Re((\sigma -s) -\alpha) - \Re(\sigma -s)](g^*\vartheta)(s)\mathrm{d}\mathsf{W}(s)||^2\\
           &\\
            & \leq 4(\gamma h)^2\displaystyle\int_0^\sigma\mathbb{E}\|(f^*\vartheta)(s) + \mathsf{C}\Tilde{u}^\mu(s,\vartheta)\|^2\mathrm{d}s + 4(\gamma h)^2\displaystyle\int_{\sigma -\alpha}^\sigma\mathbb{E}\|(f^*\vartheta)(s) + \mathsf{C}\Tilde{u}^\mu(s,\vartheta)\|^2\mathrm{d}s\\
           &\\
           & + 4Tr(Q)(\gamma h)^2\displaystyle\int_0^\sigma\mathbb{E}||(g^*\vartheta)(s)\|^2\mathrm{d}s  + 4Tr(Q)(\gamma h)^2\displaystyle\int_{\sigma -\alpha}^\sigma\mathbb{E}||(g^*\vartheta)(s)\|^2\mathrm{d}s 
           \\
           &\\
           & \leq 8(\gamma h)^2c\Big(\Omega_f(r)||\tau_f||_{L^1[0,c]} + M_C^2cK_u\Big) + 8(\gamma h)^2c\Big(2\Omega_f(r)||\zeta_f||_{L^1[0, c]}) + 2 M_C^2cK_u\Big)\\
           &\\
           & + 4(\gamma h)^2c^{1/2}Tr(Q)\Omega_g(r)||\tau_g|| + 4(\gamma h)^2c^{1/2}Tr(Q)\Omega_g(r)||\tau_g||\\
           &\\
           &\to 0(\gamma\to 0).
 \end{align*}
  As a result, for each $\sigma\in [\gamma, c]$, there exists relatively compact set arbitrarily close to the set $\{(\Psi_1v)(\sigma): \vartheta\in B_r(\gamma)\}$ in $\mathbb{H}$.
  Hence, $\{(\Psi_1 \vartheta)(\sigma): \vartheta\in B_r(\gamma)\}$ is also relatively compact in $\mathbb{H}$ for $\sigma\in [\gamma, c]$.\\
  
\noindent For the second part of this step, we will show that $\Psi_1(B_r(\gamma))$ is an equicontinuous family of functions on $[\gamma, c]$. For any $\vartheta\in B_r(\gamma)$ and $\gamma<\sigma_1<\sigma_2\leq c$, we obtain 
  \allowdisplaybreaks
  \begin{align*}
           \mathbb{E}||(\Psi_1 \vartheta)(\sigma_2)\ \ - &(\Psi_1\vartheta)(\sigma_1)||^2  = \displaystyle 6\mathbb{E}||\int_{\sigma_1}^{\sigma_2}\Re(\sigma_2-s)[(f^*\vartheta)(s) + \mathsf{C}\Tilde{u}^\mu(s,\vartheta)]\mathrm{d}s||^2\\
           &\\
           & + \displaystyle 6\mathbb{E}\|\int_0^{\sigma_1}[\Re(\sigma_2-s) - \Re(\sigma_1 - s)][(f^*\vartheta)(s) + \mathsf{C}\Tilde{u}^\mu(s,\vartheta)]\mathrm{d}s\|^2\\
           &\\
           & + \displaystyle 6\mathbb{E}||\int_{\sigma_1}^{\sigma_2}\Re(\sigma_2-s)(g^*\vartheta)(s)\mathrm{d}\mathsf{W}(s)||^2\\
           &\\
           & + \displaystyle 6\mathbb{E}\|\int_0^{\sigma_1}[\Re(\sigma_2-s) - \Re(\sigma_1-s)](g^*\vartheta)(s)\mathrm{d}\mathsf{W}(s)\|^2 \\
           &\\
           &=l_1 + l_2 + l_3 + l_4.
 \end{align*}
To demonstrate that $ \mathbb{E}||(\Psi_1 \vartheta)(\sigma_2) - (\Psi_1 \vartheta)(\sigma_1)||^2\to 0(\sigma_2\to \sigma_1),
  $
 all that is left to do is verify that $l_i\to 0$ independently of $\vartheta\in B_r(\gamma)$ when $\sigma_2\to \sigma_1$ for $i=1, 2,\cdots, 4$.\\
  For $l_1$ and $l_2$ when, using the assumption \eqref{equa29}, Lemma \ref{lemma21}, Lemma \ref{lemma26} and H\"older inequality, we obtain the following estimations,
 \allowdisplaybreaks
 \begin{align*}
         l_1 &= \displaystyle 6\mathbb{E}\|\int_{\sigma_1}^{\sigma_2}\Re(\sigma_2-s)[(f^*\vartheta)(s) + \mathsf{C}\Tilde{u}^\mu(s,\vartheta)]\mathrm{d}s\|^2\\
         &\\
         &\leq 6M^2\displaystyle\int_{\sigma_1}^{\sigma_2}\mathbb{E}\|[(f^*\vartheta)(s) + \mathsf{C}\Tilde{u}^\mu(s,\vartheta)]\|^2\mathrm{d}s\\
         &\\
         &\leq 12M^2\Big(\Omega_f(r)||\zeta_f||_{L^1[0,c]} + M_C^2(\sigma_2-\sigma_1)K_u\Big)(\sigma_2 - \sigma_1)\\
         &\\
         &\to 0(\sigma_2\to \sigma_1).\\
         &\\
         l_3 &= \displaystyle 6\mathbb{E}\|\int_{\sigma_1}^{\sigma_2}\Re(\sigma_2-s)(g^*\vartheta)(s)\mathrm{d}\mathsf{W}(s)\|^2\\
         &\\
         &\leq 6M^2Tr(Q)\displaystyle\int_{\sigma_1}^{\sigma_2}\mathbb{E}||(g^*\vartheta)(s)||^2\mathrm{d}s\\
         &\\
         &\leq 6M^2 Tr(Q)\Omega_g(r)||\tau_g||_{L^1[0,1]} (\sigma_2-\sigma_1)^{1/2}\\
         &\\
         &\to 0(\sigma_2\to \sigma_1).
 \end{align*}
  Additionally, if $0<\gamma<\sigma_1$ is small enough, we derive the following inequality for $I_2$ and $I_4$:
 \allowdisplaybreaks
 \begin{align*}
         l_2 &= \displaystyle 6\mathbb{E}\|\int_0^{\sigma_1}[\Re(\sigma_2-s) - \Re(\sigma_1-s)]\Big[(f^*\vartheta)(s) + \mathsf{C}\Tilde{u}^\mu(s,\vartheta)\Big]\mathrm{d}s\|^2 \\
         &\\
         &\leq\displaystyle 12\mathbb{E}\|\int_0^{\sigma_1 - \alpha}[\Re(\sigma_2-s) - \Re(\sigma_1-s)]\Big[(f^*\vartheta)(s) + \mathsf{C}\Tilde{u}^\mu(s,\vartheta)\Big]\mathrm{d}s\|^2\\
         &\\
         & + \displaystyle 12\mathbb{E}\|\int_{\sigma_1-\alpha}^{\sigma_1}[\Re(\sigma_2-s) - \Re(\sigma_1-s)]\Big[(f^*\vartheta)(s) + \mathsf{C}\Tilde{u}^\mu(s,\vartheta)\Big]\mathrm{d}s\|^2\\
         &\\
         &\leq 12\sup_{s\in[0, \sigma_1 -\alpha]}||\Re(\sigma_2 - s) - \Re(\sigma_1 - s)||^2\Big(2\Omega_f(r)\|\tau_f\|_{L[0, c]} + 2M_C^2K_u \Big)(\sigma_1 - \alpha)\\
         &\\
         & + 12\sup_{s\in[\sigma_1 -\alpha, \sigma_1]}||\Re(\sigma_2 - s) - \Re(\sigma_1 - s)||^2\Big(2\Omega_f(r)||\tau_f||_{L[0,c]} + 2M_C^2K_u\Big)\alpha\\
         &\\
         &\to 0(\sigma_2\to \sigma_1).\\
         &\\
         l_4 &= \displaystyle 6\mathbb{E}\|\int_0^{\sigma_1}[\Re(\sigma_2-s) - \Re(\sigma_1-s)](g^*\vartheta)(s)\mathrm{d}\mathsf{W}(s)\|^2 \\
         &\\
         &\displaystyle\leq 12\mathbb{E}\|\int_0^{\sigma_1 -\alpha}[\Re(\sigma_2-s) - \Re(\sigma_1-s)](g^*\vartheta)(s)\mathrm{d}\mathsf{W}(s)\|^2\\
         &\\
         & + 12\mathbb{E}\|\int_{\sigma_1-\alpha}^{\sigma_1}[\Re(\sigma_2-s) - \Re(\sigma_1 - s)](g^*\vartheta)(s)\mathrm{d}\mathsf{W}(s)\|^2\\
         &\\
         &\leq 12Tr(Q)\Omega(r)\sup_{s\in [0, \sigma_1 -\alpha]}||\Re(\sigma_2-s) - \Re(\sigma_1 - s)||^2||\tau_g||_{L^1([0,1])}\times\Big(\sigma_1 - \alpha \Big)^{1/2}\\
         &\\
         & + 12Tr(Q)\sup_{s\in[\sigma_1 -\alpha, \sigma_1]}||\Re(\sigma_2 - s) - \Re(\sigma_1 - s)||^2\Omega(r)||\tau_g||_{L^1([0,1])}\alpha^{1/2}\to 0\\
         &\\
         &\to 0(\sigma_2\to \sigma_1).
 \end{align*}
  Above all, we obtain that $l_{i=1,2,3,4} \to 0$ as $\sigma_2\to \sigma_1$, and $\gamma\to 0$, which means $\Psi_1(B_r(\gamma))$ is equicontinuous.\\
  
  As a result, the precompactness of\, $\Psi_\gamma(B_r(\gamma))$ is demonstrated by the \textbf{Arzela-Ascoli} Theorem. Thus, by utilizing \textbf{Schauder's} fixed point Theorem, we conclude that $\Psi_\gamma$ has at least one fixed point $\Tilde{v}\in B_r(\gamma)$, i.e:
    \begin{align*}
      \begin{array}{ll}
        \vartheta(\sigma) &= \Re(\sigma)h^*(\hat{\vartheta}) + \displaystyle\int_0^{\sigma}\Re(\sigma-s)\Big[f^*(\hat{\vartheta})(s) + C\Tilde{u}^\mu(s, \hat{\vartheta})\Big]\mathrm{d}s\\
         & + \displaystyle\int_0^{\sigma}\Re(\sigma-s)g^*(s,\hat{\vartheta})(s)\mathrm{d}\mathsf{W}(s), \sigma\in [0,c].
      \end{array}
 \end{align*}
 Set 
   \begin{align*}
         \Tilde{z}(\sigma) &= \Re(\sigma)h^*(\hat{\vartheta}) + \displaystyle\int_0^{\sigma}\Re(\sigma -s)\Big[f^*(\hat{\vartheta})(s) + C\Tilde{u}^\mu(s, \Tilde{z})\Big]\mathrm{d}s\\
         & + \displaystyle\int_0^{\sigma}\Re(\sigma -s)g^*(s,\hat{\vartheta})(s)\mathrm{d}\mathsf{W}(s), \sigma\in [0,c].
 \end{align*}
 Obviously, $\hat{\vartheta}(\sigma)=\Tilde{z}(\sigma)$ for $\sigma\in [\gamma, c]$. It is clear from the definitions of the functions $f^*, g^*$, and $h^*$ that,
  \begin{align*}
         \Tilde{z}(\sigma) &= \Re(\sigma)h^*(\Tilde{z}) + \displaystyle\int_0^{\sigma}\Re(\sigma -s)\Big[f^*(\Tilde{z})(s) + \mathsf{C}\Tilde{u}^\mu(s, \Tilde{z})\Big]\mathrm{d}s\\
         & + \displaystyle\int_0^{\sigma}\Re(\sigma-s)g^*(s,\Tilde{z})(s)\mathrm{d}\mathsf{W}(s), \sigma\in [0, c],
 \end{align*}
 which implies, $\Tilde{z}$ is a mild solution of system \eqref{sys} in $B_r$.
\end{step}
This completes the proof.
\end{proof}
For all $\gamma\in (0, c)$ and an arbitrary $\vartheta\in\mathcal{M}(J, L^2(\Omega, \mathbb{H}))$, define 
\begin{equation}\label{equa32}
 (\mathcal{N}_\gamma \vartheta)(\sigma) =\begin{cases}
	 \vartheta(\gamma), \sigma\in [0, \gamma],\\
		\vartheta(\sigma), \sigma\in [\gamma, c],
	\end{cases}
\end{equation}
and 
\begin{align*}
    f_\gamma(\sigma, \vartheta(\sigma)) = f(\sigma, (\mathcal{N}\vartheta)(\sigma)), \ \sigma\in [\gamma, c]\\
    \\
    g_\gamma(\sigma,\vartheta(\sigma)) = g(\sigma, (\mathcal{N}\vartheta)(\sigma)), \ \sigma\in [\gamma, c]\\
    \\
    \zeta_\gamma(\sigma,\vartheta(\sigma)) = \zeta(\sigma, (\mathcal{N}\vartheta)(\sigma)), \ \sigma\in [\gamma, c]
\end{align*}
It is clear that the functions $f_\gamma, h_\gamma$ and $\zeta_\gamma$ defined above meet the constraint \textbf{(H4)}, resulting in the following lemma:
\begin{lemma}\label{lema213}
    Assume that \textbf{(R1)-(R2)} are satisfied. If the assumptions \textbf{(H1)-(H3)} hold, then the following nonlocal problem 
    \begin{equation}\label{equa31}
		\begin{cases}
			\vartheta^\prime(\sigma) = A\vartheta(\sigma) + \displaystyle\int_0^\sigma\Pi(\sigma-s) \vartheta(s)ds + g_\gamma(\sigma,\vartheta(\sigma))\mathrm{d}\mathsf{W}(\sigma) + Cu^\mu(\sigma, (\mathcal{N}_{\gamma_n}\vartheta)(\sigma)),\\
			\vartheta(0) = \displaystyle\int_0^\sigma\zeta_\gamma(s, \vartheta(s))\mathrm{d}s,
		\end{cases}
	\end{equation}
    has at least one mild solution in $B_r$, given that there is a positive constant $r$ such that \eqref{equa28} is satisfied.
\end{lemma}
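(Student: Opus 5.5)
The plan is to reduce Lemma \ref{lema213} to Theorem \ref{theorem36} by checking that the truncated data $f_\gamma$, $g_\gamma$, $\zeta_\gamma$ (and the induced nonlocal term $h_\gamma(\vartheta)=\int_0^c\zeta_\gamma(s,\vartheta(s))\mathrm{d}s$) satisfy \textbf{(H1)}--\textbf{(H4)} with the same constants. Once this is done, Theorem \ref{theorem36} applied to the modified system \eqref{equa31} gives a mild solution in $B_r$ whenever $r$ satisfies \eqref{equa28}.

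\textbf{Step 1: the growth and regularity hypotheses.} Here I use the properties of the operator $\mathcal{N}_\gamma$ from \eqref{equa32}.
\begin{enumerate}
\item For $\vartheta\in\mathcal{M}(J,L^2(\Omega,\mathbb{H}))$, the function $\mathcal{N}_\gamma\vartheta$ is again continuous in mean square and $\mathcal{F}_\sigma$-adapted. It is frozen at the value $\vartheta(\gamma)$ on $[0,\gamma]$, and for $\sigma<\gamma$ this value is $\mathcal{F}_\gamma$-measurable rather than $\mathcal{F}_\sigma$-measurable. I will address this by regarding the truncated coefficients as relevant only on $[\gamma,c]$, which is where the paper defines them. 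On $[0,\gamma)$ I will take them equal to the original $f,g,\zeta$ evaluated at $\vartheta(\sigma)$, which still agrees with \textbf{(H4)}.
\item $\sup_{\sigma}\mathbb{E}\|(\mathcal{N}_\gamma\vartheta)(\sigma)\|^2\le\sup_\sigma\mathbb{E}\|\vartheta(\sigma)\|^2$. Since $\Omega_f,\Omega_g,\Omega_\zeta$ are nondecreasing, the growth bounds in \textbf{(H1)}--\textbf{(H3)} carry over with the same $\tau$'s and $\Omega$'s.
\item Carathéodory continuity is preserved because $\mathcal{N}_\gamma$ is continuous (indeed $1$-Lipschitz) in the sup norm of $\mathcal{M}$. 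The same argument shows that $h_\gamma$ is continuous on $\mathcal{M}$, using dominated convergence via the bound on $\zeta$.
\end{enumerate}

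\textbf{Step 2: hypothesis \textbf{(H4)}.} If $\vartheta_1=\vartheta_2$ on $[\gamma,c]$, then $\mathcal{N}_\gamma\vartheta_1=\mathcal{N}_\gamma\vartheta_2$ on all of $J$, since on $[0,\gamma]$ both equal the common value at $\gamma$. Hence $f_\gamma,g_\gamma,\zeta_\gamma$ coincide there. The control term $u^\mu(\sigma,(\mathcal{N}_\gamma\vartheta)(\sigma))$ is likewise determined by $\vartheta|_{[\gamma,c]}$. By Lemma \ref{lemma35}, applied to the truncated data, it is continuous on $B_r$ and bounded by the same constant $K_u$.

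\textbf{Step 3: conclusion.} With \eqref{equa28} holding for the same $r$, I rerun the proof of Theorem \ref{theorem36} with $(f,g,h)$ replaced by $(f_\gamma,g_\gamma,h_\gamma)$.
\begin{enumerate}
\item Build $\Psi_\gamma$ on $B_r(\gamma)$ and check that it maps $B_r(\gamma)$ into itself and is continuous.
\item Check complete continuity using the compactness of $\Re(\sigma)$ for $\sigma>0$ together with Lemmas \ref{lemma27} and \ref{lemma28}, then apply Arzelà--Ascoli and Schauder's theorem.
\item Extend the fixed point from $[\gamma,c]$ to $J$ through the explicit formula, as at the end of that proof.
\end{enumerate}

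The main obstacle I expect is the compactness of the nonlocal part. The set $\{\Re(\sigma)h_\gamma(\vartheta)\}$ is precompact only for $\sigma\ge\gamma>0$, which is exactly why one works on $[\gamma,c]$ and why \textbf{(H4)} is needed. I must make sure the truncation gives this cleanly. The adaptedness issue from Step 1 also has to be handled carefully so that the stochastic integrals remain well defined.
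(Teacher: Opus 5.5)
Your overall plan is the paper's: check that the truncated data satisfy \textbf{(H1)}--\textbf{(H4)}, then rerun the Schauder argument of Theorem~\ref{theorem36} on $B_r(\gamma)$. The paper's proof is just the remark that the argument is similar to Theorem~\ref{theorem36}. However, the convention you adopt in Step~1 to avoid the adaptedness problem breaks the argument. Because $(\mathcal{N}_\gamma\vartheta)(\sigma)=\vartheta(\sigma)$ on $[\gamma,c]$, taking $f_\gamma(\sigma,\vartheta)=f(\sigma,\vartheta(\sigma))$ on $[0,\gamma)$ (and likewise for $g_\gamma,\zeta_\gamma$) gives $f_\gamma\equiv f$, $g_\gamma\equiv g$, $\zeta_\gamma\equiv\zeta$, hence $h_\gamma\equiv h$. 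The truncation simply disappears.

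Your Step~2 claim is then false on $[0,\gamma)$. That claim says $f_\gamma,g_\gamma,\zeta_\gamma$ coincide on all of $J$ whenever $\vartheta_1=\vartheta_2$ on $[\gamma,c]$. But for instance $h_\gamma(\vartheta_1)-h_\gamma(\vartheta_2)=\int_0^\gamma[\zeta(s,\vartheta_1(s))-\zeta(s,\vartheta_2(s))]\,\mathrm{d}s$ need not vanish. This localization on the whole of $J$ is exactly what the proof of Theorem~\ref{theorem36} uses. For $\sigma\in[\gamma,c]$, $(\Psi_\gamma\vartheta)(\sigma)$ contains $\Re(\sigma)h^*(\vartheta)$ and convolution integrals over $[0,\gamma)$. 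So $f^*,g^*,h^*$, which are defined through an arbitrary extension $\bar\vartheta$, are well defined on $B_r(\gamma)$ only if these quantities do not depend on $\bar\vartheta|_{[0,\gamma)}$. The coefficients are therefore not ``relevant only on $[\gamma,c]$''. Your convention ``still agrees with \textbf{(H4)}'' only because \textbf{(H4)}, read literally, compares values at points $\sigma\in[\gamma,c]$ where $\vartheta_1(\sigma)=\vartheta_2(\sigma)$. Every function satisfies that, so it is not the property the fixed-point argument needs.

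The repair is to truncate on all of $J$: set $f_\gamma(\sigma,\vartheta)=f(\sigma,(\mathcal{N}_\gamma\vartheta)(\sigma))$ for every $\sigma\in J$, and similarly for $g_\gamma,\zeta_\gamma$. This is what the paper actually uses; see \eqref{equa33}, where $f(s,x_n(s))$ and $g(s,x_n(s))$ appear for all $s\in[0,c]$. With this definition your Steps~2 and~3 coincide with the paper's argument.

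The price is the issue you correctly spotted. For $s<\gamma$ the integrand $g(s,\vartheta(\gamma))$ is $\mathcal{F}_\gamma$- but not $\mathcal{F}_s$-measurable, so the It\^o integral over $[0,\gamma)$ is anticipating. Your patch does not remove this in any case, because $u^\mu(\cdot,\mathcal{N}_\gamma\vartheta)$ still contains $\int_0^c\Re(c-s)g(s,(\mathcal{N}_\gamma\vartheta)(s))\,\mathrm{d}\mathsf{W}(s)$. The paper leaves this point unaddressed; indeed, already in the original problem $h(\vartheta)$ is in general only $\mathcal{F}_c$-measurable. So the adaptedness concern is a gap you share with the paper. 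Undoing the truncation is not a way around it, since it removes the localization on which the reduction to Theorem~\ref{theorem36} rests.
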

\begin{proof}
    The proof is similar to the proof of Theorem \ref{theorem36} above. 
\end{proof}

\section{Approximate controllability results}\label{sec3}
In this section, we analyze the approximate controllability of the stochastic dynamical control system $\eqref{sys}$ by utilizing approximation techniques and a diagonal argument.
\begin{theorem}\label{theo31}
  Suppose that \textbf{(H1)}-\textbf{(H3)} are satisfied, then the stochastic control systems with nonlocal initial conditions \eqref{sys} have at least one mild solution in $\mathcal{M}(J, L^2(\Omega, \mathbb{H})),$ provided that there exists a positive constant $r$ such that \eqref{equa28} is satisfied.
\end{theorem}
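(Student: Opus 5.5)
The plan is to remove hypothesis \textbf{(H4)} by approximation followed by a diagonal/compactness argument. Fix a sequence $\gamma_n\downarrow 0$ in $(0,c)$ and replace $f,g,\zeta$ by the truncated nonlinearities $f_{\gamma_n},g_{\gamma_n},\zeta_{\gamma_n}$ built from the operator $\mathcal{N}_{\gamma_n}$ in \eqref{equa32}. These satisfy \textbf{(H4)}. Since $\mathbb{E}\|(\mathcal{N}_{\gamma_n}\vartheta)(\sigma)\|^2\le \sup_s\mathbb{E}\|\vartheta(s)\|^2$, they also satisfy \textbf{(H1)}--\textbf{(H3)} with the same $\tau_f,\tau_g,\tau_\zeta,\Omega_f,\Omega_g,\Omega_\zeta$. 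Consequently the same $r$ fulfilling \eqref{equa28} works for every $n$. Lemma \ref{lema213}, which is essentially Theorem \ref{theorem36}, then gives for each $n$ a mild solution $\vartheta_n\in B_r$ of \eqref{equa31} with $\gamma=\gamma_n$:
\[
\vartheta_n(\sigma)=\Re(\sigma)h_{\gamma_n}(\vartheta_n)+\int_0^\sigma\Re(\sigma-s)\big[f_{\gamma_n}(s,\vartheta_n(s))+Cu^\mu(s,\mathcal{N}_{\gamma_n}\vartheta_n)\big]\mathrm{d}s+\int_0^\sigma\Re(\sigma-s)g_{\gamma_n}(s,\vartheta_n(s))\mathrm{d}\mathsf{W}(s).
\]
The bounds are uniform in $n$: $\mathbb{E}\|\vartheta_n(\sigma)\|^2\le r$, and the control satisfies $\mathbb{E}\|u^\mu\|^2\le K_u$ by Lemma \ref{lemma35}.

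The next step is to show that $\{\vartheta_n\}$ is relatively compact in $\mathcal{M}([\delta,c],L^2(\Omega,\mathbb{H}))$ for every fixed $\delta\in(0,c)$. This follows the argument of Step \ref{step363}, and the uniform bounds from the first paragraph make it independent of $n$.
\begin{itemize}
\item For the initial term, $\{\Re(\sigma)h_{\gamma_n}(\vartheta_n)\}$ is precompact because $\Re(\sigma)$ is compact for $\sigma\ge\delta$ and the family $h_{\gamma_n}(\vartheta_n)$ is bounded in mean square. It is equicontinuous on $[\delta,c]$ by the norm continuity in Lemma \ref{lemma27}.
\item For the integral terms, split at $\sigma-\alpha$ and factor out $\Re(\alpha)$ using Lemma \ref{lemma28}. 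This gives pointwise relative compactness, and the estimates $l_1,\dots,l_4$ give equicontinuity.
\end{itemize}
Arzel\`a--Ascoli then yields relative compactness on $[\delta,c]$. Taking $\delta=\delta_k\downarrow 0$ and a diagonal subsequence, one obtains a process $\vartheta$ with $\vartheta_n\to\vartheta$ in $\mathcal{M}([\delta_k,c])$ for every $k$, and $\sup_\sigma\mathbb{E}\|\vartheta(\sigma)\|^2\le r$.

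Finally I pass to the limit in the integral equation for fixed $\sigma>0$. For $s>\gamma_n$ we have $(\mathcal{N}_{\gamma_n}\vartheta_n)(s)=\vartheta_n(s)\to\vartheta(s)$, so Carath\'eodory continuity gives a.e.\ convergence of $f_{\gamma_n}(s,\vartheta_n(s))$, $g_{\gamma_n}(s,\vartheta_n(s))$ and $\zeta_{\gamma_n}(s,\vartheta_n(s))$. The bounds $\tau_\bullet(s)\Omega_\bullet(r)$ make dominated convergence applicable to the Bochner integrals, to the It\^o integrals via Lemma \ref{lemma21}, and to $h_{\gamma_n}(\vartheta_n)=\int_0^c\zeta_{\gamma_n}(s,\vartheta_n(s))\,\mathrm{d}s\to h(\vartheta)$. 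For the control term I use the continuity argument of Lemma \ref{lemma35}(i) together with these convergences. This gives $u^\mu(s,\mathcal{N}_{\gamma_n}\vartheta_n)\to u^\mu(s,\vartheta)$ in mean square for each $s$, with dominating constant $K_u$. Hence $\vartheta$ satisfies the mild-solution identity on $(0,c]$. Continuity at $0$ and adaptedness are inherited, since each $\vartheta_n$ is adapted and the limits are in $L^2(\Omega)$. So $\vartheta\in\mathcal{M}(J,L^2(\Omega,\mathbb{H}))$ is a mild solution of \eqref{sys}.

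I expect the main obstacle to be the behaviour near $\sigma=0$. Compactness is only available on $[\delta,c]$, and the initial term $\Re(\sigma)h(\cdot)$ is not equicontinuous at $0$ without compactness of $h$. The diagonal argument is meant to bypass this. The delicate point is the convergence of $h_{\gamma_n}(\vartheta_n)$, which requires the contribution of $[0,\gamma_n]$ to vanish. I would handle it with the bound $\int_0^{\gamma_n}\tau_\zeta(s)\,\mathrm{d}s\,\Omega_\zeta(r)\to 0$ together with dominated convergence on $[\delta_k,c]$.
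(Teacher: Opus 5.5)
Your proposal is sound at the paper's own level of rigor. Its first half is the same as the paper's: the truncated problems built with $\mathcal{N}_{\gamma_n}$, the uniform bounds in $B_r$, compactness on $[\delta,c]$ via Lemmas \ref{lemma27}--\ref{lemma28}, and a diagonal subsequence converging in $\mathcal{M}([\delta_k,c],L^2(\Omega,\mathbb{H}))$ for every $k$.

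The two arguments part ways at $\sigma=0$. The paper proves compactness of $\{\vartheta_n\}$ in the whole of $\mathcal{M}(J,L^2(\Omega,\mathbb{H}))$, in three moves:
\begin{itemize}
\item It shows $\{h(\vartheta_n^*)\}$ is Cauchy. This combines the small-interval estimate $\mathbb{E}\|h(\vartheta_1)-h(\vartheta_2)\|^2\le\int_0^{\gamma}\mathbb{E}\|\zeta(s,\vartheta_1(s))-\zeta(s,\vartheta_2(s))\|^2\,\mathrm{d}s$, the frozen paths $\mathcal{N}_{\gamma_0}\vartheta_n^*$, and the continuity of $h$ on $\mathcal{M}$ from \textbf{(H3)}.
\item It deduces that $\{h(x_n)\}$, and hence $\{\Re(\cdot)h(x_n)\}$, is precompact on all of $J$.
\item It passes to the limit in the $\mathcal{M}$-norm, using $x_n\to\vartheta_0$ from \eqref{equa37}.
\end{itemize}

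You never need compactness near $0$. Instead you pass to the limit pointwise for each $\sigma\in(0,c]$, using three facts:
\begin{itemize}
\item $x_n(s)=\vartheta_n(s)$ once $\gamma_n<s$;
\item the integral form $h(x_n)=\int_0^c\zeta(s,x_n(s))\,\mathrm{d}s$;
\item dominated convergence with majorants $\tau_\bullet(s)\Omega_\bullet(r)$ for $\zeta$, $f$, $g$, and hence for the control.
\end{itemize}
This is shorter: one application of dominated convergence replaces the paper's Cauchy argument and \eqref{equa37}. You do not even need the separate estimate on $[0,\gamma_n]$, since pointwise convergence holds at every $s>0$. The price is that your route relies on the concrete integral structure of $h$. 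The paper's argument is organized around two abstract properties of $h$: continuity on $\mathcal{M}$ and insensitivity to small initial intervals. It also yields the stronger conclusion of sup-norm convergence on all of $J$, which the theorem does not actually require.

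Two corrections.
\begin{itemize}
\item Continuity of the limit at $0$ is not inherited from the $\vartheta_n$, because your convergence is only locally uniform on $(0,c]$. Instead, define $\vartheta(0):=h(\vartheta)=\lim_n h(x_n)$. Then observe that the right-hand side of the mild identity you established on $(0,c]$ is continuous in mean square on $[0,c]$.
\item The inference ``$\Re(\sigma)$ is compact and $\{h_{\gamma_n}(\vartheta_n)\}$ is bounded in mean square, hence $\{\Re(\sigma)h_{\gamma_n}(\vartheta_n)\}$ is precompact'' holds in $\mathbb{H}$ but not in $L^2(\Omega,\mathbb{H})$. For a counterexample, take $\xi_n e$ with $\{\xi_n\}$ orthonormal real random variables and $\Re(\sigma)e\neq 0$. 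The paper makes exactly the same leap in Theorem \ref{theorem36} and in this proof. So this does not distinguish your argument from the paper's, but neither is complete at that step.
\end{itemize}
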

\begin{proof}
Let us consider $\{\gamma_n: n\in\mathbb{N}\}$ to be a decreasing sequence in $(0, c)$ with $\lim_{n\to\infty}\gamma_n = 0$. For any $n$, by Lemma \ref{lema213}, we assume that the following system 
	\begin{equation}
		\begin{cases}
			\vartheta(\sigma) = A\vartheta(\sigma) + \displaystyle\int_0^\sigma\Pi(\sigma -s)\vartheta(s)\mathrm{d}s + f_{\gamma_n}(\sigma, \vartheta(\sigma)) + g_{\gamma_n}(\sigma,\vartheta(\sigma))\mathrm{d}\mathsf{W}(\sigma) + Cu^\mu(\sigma, (\mathcal{N}_{\gamma_n}\vartheta)(\sigma)),\\
			\vartheta(0) = \displaystyle\int_0^\sigma\zeta_{\gamma_n}(s, \vartheta(s))\mathrm{d}s,
		\end{cases}
	\end{equation}
admits at least one mild solution $\vartheta_n\in B_r$ for the constant $r$ satisfying \eqref{equa28}, which is defined by
\begin{align*}
        \vartheta_n(\sigma) = & \displaystyle \Re(\sigma)\int_0^\sigma\zeta_{\gamma_n}(s, \vartheta(s))\mathrm{d}s + \int_0^\sigma \Re(\sigma -s)\Big[f_{\gamma_n}(s,\vartheta_n(s))\\
        & + \displaystyle Cu^\mu(s,\mathcal{N}_{\gamma_n}\vartheta_n(s))\Big]\mathrm{d}s + \int_0^\sigma \Re(\sigma -s)(g_{\gamma_n}(s, \vartheta_n(s)))\mathrm{d}\mathsf{W}(s),\ \sigma\in [0,c].
\end{align*}
Let's suppose the following: 
	\begin{equation}
		x_n(\sigma) =\begin{cases}
		 \vartheta_n(\gamma_n), \sigma\in [0, \gamma_n],\\
			\vartheta_n(\sigma), \sigma\in [\gamma_n, c],
		\end{cases}
	\end{equation}
	then $x_n\in B_r$. We may deduce that given the characteristics of $f_{\gamma_n}, g_{\gamma_n}$ and $\zeta_{\gamma_n}$,
	\begin{align}\label{equa33}
    \begin{array}{ll}
        \vartheta_n(\sigma) = & \displaystyle \Re(\sigma)\int_0^\sigma \zeta(s, x_n)\mathrm{d}s + \int_0^\sigma \Re(\sigma -s)\Big[f(s,x_n(s))\\
        & + \displaystyle \mathsf{C}u^\mu(s, x_n(s))\Big]\mathrm{d}s + \int_0^\sigma \Re(\sigma -s)(g(s, x_n(s)))\mathrm{d}\mathsf{W}(s),\ \sigma\in [0,c].
    \end{array}
\end{align}
We shall then demonstrate that the set $\{\vartheta_n : n\in\mathbb{N}\}$ is precompact in $\mathcal{M}(J, L^2(\Omega,\mathbb{H}))$. In light of this, we provide the following notations:
\allowdisplaybreaks
\begin{align*}
    \vartheta_n(\sigma) &= \Re(\sigma)\int_0^\sigma\zeta(s, \vartheta_n(s))\mathrm{d}s,\ \sigma\in [0, c],\\
    &\\
    \psi_n(\sigma) = &\int_0^\sigma \Re(\sigma -s)\Big[(f(s,\vartheta_n(s)) + \displaystyle \mathsf{C}u^\mu(s,\vartheta_n(s))\Big]\mathrm{d}s + \int_0^\sigma \Re(\sigma - s)g(s, \vartheta_n(s))\mathrm{d}\mathsf{W}(s),\ \sigma\in [0,c].
\end{align*}
Consequently,  all that is left to do is to demonstrate that the sets $\{\vartheta_n:n\in\mathbb{N}\}$ and $\{\psi_n: n\in\mathbb{N}\}$ are precompact in the space $\mathcal{M}(J, L^2(\Omega, \mathbb{H}))$. \\

\noindent To do so, we go through the following steps:
\begin{step}
First, we can see that $x_n\in B_r$ from the definition of $x_n(\sigma)$. So, the hypotheses \textbf{(H1)-(H3)} are satisfied for functions $f(s, x_n),\; g(s, x_n(s))$ and $h(s, x_n(s))$. Furthermore, the control function $u^\mu(s, x_n)$ satisfies the properties \textbf{(i)} and \textbf{(ii)} of Lemma \ref{lemma35}. So, we easily prove that the set $\{\psi_n:n\in\mathbb{N}\}$ is precompact in $\mathcal{M}(J, L^2(\Omega, \mathbb{H}))$ by using the same arguments as in Theorem \ref{theorem36}.
\end{step}

\begin{step}
Secondly, we'll show that the set $\{\vartheta_n : n\in\mathbb{N}\}$ is precompact in $\mathcal{M}(J, L^2(\Omega,\mathbb{H}))$.\\ It turns out that all we need to show is that the set $$\{\int_0^\sigma \zeta(s, x_n(s))\mathrm{d}s:n\in\mathbb{N}\}$$ is precompact in $\mathbb{H}$.\\
Let $\{\mu_n:n\in\mathcal{N}\}$ be a decreasing sequence in $(0,c)$ such that $\lim_{n\to\infty}\mu_n=0$.
 \begin{itemize}
     \item For all $n\in\mathbb{N}$ and $\sigma\in[\mu_1, c]$, define the function $z_n : [\mu_1, c]\to\mathbb{H}$ by  $z_n(\sigma)=\vartheta_n(\sigma)$. From the fact that $x_n\in B_r$, the set $$\{\int_0^\sigma \zeta(s, x_n(s))\mathrm{d}s:n\in\mathbb{N}\}$$ is bounded.\\ 
     Meanwhile, from the fact that the resolvent operator $\Re(\sigma)$ is compact, and hence norm continuous (see, Lemma \ref{lemma27}) for $\sigma>0$, which  implies that the set $$\displaystyle\{\Re(\sigma)\int_0^\sigma \zeta(s, x_n(s))\mathrm{d}s:n\in\mathbb{N}\}$$ 
is precompact in $\mathbb{H}$ for any $\sigma\in [\mu_1, c]$ and $\displaystyle\{\Re(\cdot)\int_0^\sigma \zeta(s, x_n(s))\mathrm{d}s:n\in\mathbb{N}\}$ is equicontinuous.\\ Consequently, we conclude based on \textbf{Arzela-Ascoli's}  Theorem, that
  $$\{\Re(\sigma)\int_0^\sigma\zeta(s, x_n(s))\mathrm{d}s: n\in\mathbb{N}\}$$ is precompact in $\mathcal{M}([\mu_1, c], L^2(\Omega, \mathbb{H}))$.
Combining this with the precompact nature of $\{\psi_n:n\in\mathbb{N}\}$
in $\mathcal{M}(J, L^2(\Omega, \mathbb{H}))$, we obtain that $\{z_n:n\in\mathbb{N}\}$ is precompact in $\mathcal{M}([\mu_1, c], L^2(\Omega, \mathbb{H}))$. Thus, we can find a sub-sequence $\{\vartheta_n^1:n\in\mathbb{N}\}\subset\{\vartheta_n:n\in\mathbb{N}\}$ which is a \textbf{Cauchy} sequence in $\mathcal{M}([\mu_1, c], L^2(\Omega, \mathbb{H}))$.
\item In the same manner, we can construct a sub-sequence $\{\vartheta_n^2:n\in\mathbb{N}\}\subset\{\vartheta_n^1:n\in\mathbb{N}\}$, which is a Cauchy sequence in $\mathcal{M}([\mu_2, c], L^2(\Omega, \mathbb{H}))$. We demonstrate that there exists a sub-sequence $\{\vartheta_n^*:n\in\mathbb{N}\}\subset \{\vartheta_n:n\in\mathbb{N}\}$ which is a \textbf{Cauchy} sequence in $\mathcal{M}([\mu_n, c], L^2(\Omega, \mathbb{H}))$ by following the previous steps again while employing a diagonal argument.
\item In addition, $\{\vartheta_n^*(\sigma): n\in\mathbb{N}\}$ is a \textbf{Cauchy} sequence in $\mathbb{H}$ for every $\sigma\in (0, c]$.
 \end{itemize}
Consequently, there exists a continuous function $\vartheta^*:(0, c]\to L^2(\Omega, \mathbb{R})$ such that for any $\mu_k$,
\begin{align}\label{equa44}
    \lim_{n\to\infty}\max_{\sigma\in [\nu, c]}\mathbb{E}||\vartheta_n^*(\sigma) - \vartheta^*(\sigma)||^2=0.
\end{align}
In addition, we demonstrate that $\{h(\vartheta_n^*): n\in\mathbb{N}\}$ is a Cauchy sequence in $\mathbb{H}$. Let $\gamma\in (0,c)$, thus for any $\vartheta_1, \vartheta_2\in\mathcal{M}(J, L^2(\Omega, \mathbb{H}))$ with $\vartheta_1(\sigma)=\vartheta_2(\sigma), \sigma\in [\gamma, c]$ we obtain
\begin{align}
\begin{split}
     \mathbb{E}||h(\vartheta_1) - h(\vartheta_2)||^2 &= \mathbb{E}||\displaystyle\int_0^\gamma[\zeta(s,\vartheta_1(s)) - \zeta(s, \vartheta_2(s))]\mathrm{d}s||^2\\
       &\le\int_0^\gamma\mathbb{E}\|\zeta(s,\vartheta_1(s)) - \zeta(s, \vartheta_2(s))\|^2\mathrm{d}s\\
       &\to 0\text{ for }\gamma\to 0.
\end{split}
\end{align}
Thus, it is easy to see that $\forall \epsilon>0$, there is a positive constant $\gamma_0<c$ such that 
\begin{align}\label{equa34}
 \mathbb{E}||h(\vartheta_1) - h(\vartheta_2)||^2<\frac{\epsilon}{16}
\end{align}
for any $\vartheta_1,\vartheta_2\in\mathcal{M}(J, L^2(\Omega, \mathbb{H}))$ with $\vartheta_1(\sigma)=\vartheta_2(\sigma),\sigma\in [\gamma_0, c]$. Let's define the function $z(\sigma)$ by
\begin{equation}\label{equa35}
		z(\sigma) =\begin{cases}
		 \vartheta^*(\gamma_0), \sigma\in [0, \gamma_0],\\
			\vartheta^*(\sigma),\sigma\in [\gamma_0, c].
		\end{cases}
	\end{equation}
Obviously, $z\in\mathcal{M}(J, L^2(\Omega, \mathbb{H}))$. By the limit \eqref{equa44}, we have $$
  \lim_{n\to\infty}\max_{\sigma\in[\gamma_0, c]}\mathbb{E}||\vartheta_n^*(\sigma) - z^*(\sigma)||^2=0.
$$
From the definition of $\mathcal{N}_\gamma$, we can easily see that $$
  \lim_{n\to\infty}||\mathcal{N}_{\gamma_0} \vartheta_n^* - z||_\mathcal{M}=0.
$$ 
By using continuity of $h$, we can locate a natural integer $k$, such that
$$
  \lim_{n\to\infty}\mathbb{E}||h(\mathcal{N}_{\gamma_0} \vartheta_n^*) - h(z)||^2<\frac{\epsilon}{16}, n>k.
$$ 
As a result, we have for any $m, n>k$,
\begin{align*}
    \begin{array}{ll}
          \mathbb{E}||h(\vartheta_m^*) - h(\vartheta_n^*)||^2 & \leq 4\mathbb{E}||h(\vartheta_m^*) - h(\mathcal{N}_{\gamma_0}\vartheta_n^*)||^2 + 4\mathbb{E}||h(\mathcal{N}_{\gamma_0}\vartheta_m^*) - h(z)||^2\\
          &\\
         & + 4\mathbb{E}|||h(\mathcal{N}_{\gamma_0}\vartheta_n^*) - h(z)|| + 4\mathbb{E}|||h(\vartheta_n^*) - h(\mathcal{N}_{\gamma_0}\vartheta_n^*)||^2\\
         &\\
         &< \epsilon.
    \end{array}
\end{align*}
The above inequality implies that $\{h(\vartheta_n^*):n\in\mathbb{N}\}$ is a Cauchy sequence in the Hilbert space $\mathbb{H}$, namely, $\{h(\vartheta_n):n\in\mathbb{N}\}$ is precompact in $\mathbb{H}$. Finally, we can quickly confirm that using the notation \eqref{equa32},
$$
\mathbb{E}||h(\vartheta_n)-h(z_n)||^2\underset{n\to\infty}{\to 0},
$$
which implies that $$\{\int_0^c\zeta(s,z_n(s))\mathrm{d}s:n\in\mathbb{N}\}$$ is precompact in $\mathbb{H}$.
\end{step}
\noindent Above, we have established that the set $\{\vartheta_n:n\in\mathbb{N}\}$ is precompact in $\mathcal{M}(J, L^2(\Omega, \mathbb{H}))$. Consequently, a subsequence of $\{\vartheta_n:n\in\mathbb{N}\}$ still denoted by $\{\vartheta_n:n\in\mathbb{N}\}$ exists, and a function $\vartheta_0\in B_r$ such that
\begin{align}\label{equa36}
    \lim_{n\to\infty}||\vartheta_n - \vartheta_0||_\mathcal{M}^2=0.
\end{align}
According to the definition of $x_n$, we obtain that 
\begin{align}\label{equa37}
\begin{array}{ll}
   ||x_n - \vartheta_0||_\mathcal{M}^2 &= \max_{\sigma\in J}\mathbb{E}||x_n(\sigma) - \vartheta_0(\sigma)||^2\\
   &\\
   &\leq \max_{\sigma\in [0, \gamma_n]}\mathbb{E}||\vartheta_n(\gamma_n) - \vartheta_0(\sigma)||^2 + \max_{\sigma\in [\gamma_n, c]}\mathbb{E}||\vartheta_n(\gamma_n) - \vartheta_0(\sigma)||^2
   \\
   &\\
   &\leq 2\mathbb{E}||\vartheta_n(\gamma_n) - \vartheta_0(\gamma_n)||^2 + 2\max_{\sigma\in [0, \gamma_n]}\mathbb{E}||\vartheta_0(\gamma_n) - \vartheta_0(\sigma)||^2 + ||\vartheta_n - \vartheta_0||_\mathcal{M}^2\\
   &\\
   &\leq 3||\vartheta_n - \vartheta_0||_\mathcal{M}^2 + 2\max_{\sigma\in[0,\gamma_n]}\mathbb{E}||v_0(\gamma_n)-\vartheta_0(\sigma)||^2.
\end{array}
\end{align}
By using inequality \eqref{equa36} and \eqref{equa37} and taking the limit of \eqref{equa33} as $n\to\infty$ one obtain that 
\begin{align*}
    \vartheta_0(\sigma) &= \Re(\sigma)\int_0^\sigma\zeta(s, \vartheta_0(s))\mathrm{d}s + \int_0^\sigma \Re(\sigma -s)[f(s, \vartheta_0(s)) + Cu^\mu(s,\vartheta_0)]\mathrm{d}s \\ & +
\int_0^\sigma \Re(\sigma-s)(g(s, \vartheta_0(s))\mathrm{d}\mathsf{W}(s),\ \sigma\in[0,c]
\end{align*}
This implies that $\vartheta_0\in\mathcal{M}(J, L^2(\Omega, \mathbb{H}))$ is a mild solution of system \eqref{sys}. Finally, the proof of Theorem \ref{theo31} is finished.
\end{proof}
%\begin{remark}
  % Without assuming compactness or Lipschitz continuity of the nonlocal function, we used a new method to establish the existence result of the nonlocal system \eqref{sys}. Our findings, therefore, have wider applications.  
%\end{remark}
We use the following additional hypotheses in order to prove the approximate controllability of system \eqref{sys}.
\begin{enumerate}
    \item[\textbf{(H5)}] The nonlinear function $f:J\times\mathbb{H}\to\mathbb{H}$ and $g:J\times\mathbb{H}\to L_2^0$ are uniformly bounded.
    \item[\textbf{(H6)}] The linear integrodifferential system \eqref{linsys} is approximately controllable on the interval $[0, c]$.
\end{enumerate}
\begin{remark}\label{rem41}
In light of \cite{mahmudov2003controllability}, the assumption \textbf{(H6)} is equivalent to $\mu S(\mu, \Delta_0^c)\to 0$ as $\mu\to 0^+$ in the strong operator topology.
\end{remark}
\begin{theorem}\label{theo42}
    Assume that \textbf{(H5)} and (\textbf{(H6)} are satisfied, in addition to the Theorem \ref{theo31} presumptions holding true. So, the nonlocal system \eqref{sys} is approximately controllable on $J$.
\end{theorem}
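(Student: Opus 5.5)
For each $\mu>0$, Theorem \ref{theo31} gives a mild solution $\vartheta^\mu\in B_r$ of \eqref{sys} driven by the feedback control $u^\mu(\sigma,\vartheta^\mu)$. The plan is the standard Mahmudov-type argument: compute $\vartheta^\mu(c)-\tilde{\vartheta}_c$ explicitly, rewrite it as $\mu S(\mu,\Delta_0^c)$ applied to a random vector, and then let $\mu\to0^+$. The convergence as $\mu\to0^+$ will come from \textbf{(H5)} together with the compactness of $\Re(\sigma)$ for $\sigma>0$, and the final limit from \textbf{(H6)} via Remark \ref{rem41}.

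\textbf{Step 1: the terminal state.} Substitute $u^\mu$ into the mild solution formula at $\sigma=c$, and use Lemma \ref{lemma211} to write $\tilde{\vartheta}_c=\mathbb{E}\tilde{\vartheta}_c+\int_0^c\phi(s)\mathrm{d}\mathsf{W}(s)$. Set
$$p(\vartheta)=\mathbb{E}\tilde{\vartheta}_c+\int_0^c\phi(s)\mathrm{d}\mathsf{W}(s)-\Re(c)h(\vartheta)-\int_0^c\Re(c-s)f(s,\vartheta(s))\mathrm{d}s-\int_0^c\Re(c-s)g(s,\vartheta(s))\mathrm{d}\mathsf{W}(s).$$
The control term is then $\Delta_0^c(\mu I+\Delta_0^c)^{-1}p(\vartheta^\mu)$. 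Since $I-\Delta_0^c(\mu I+\Delta_0^c)^{-1}=\mu(\mu I+\Delta_0^c)^{-1}$, this gives
$$\vartheta^\mu(c)-\tilde{\vartheta}_c=-\mu S(\mu,\Delta_0^c)\,p(\vartheta^\mu).$$
(If the stochastic part of $\Delta_0^c$ must be treated conditionally, as in Mahmudov's stochastic framework, I would adapt this identity accordingly; I will treat it formally as written in the paper.) It therefore suffices to show $\mathbb{E}\|\mu S(\mu,\Delta_0^c)p(\vartheta^\mu)\|^2\to0$ along some sequence $\mu\to0^+$.

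\textbf{Step 2: compactness of the random data.} By \textbf{(H5)}, $\|f\|\le N$ and $\|g\|_{L_2^0}\le N$. Hence $\{f(\cdot,\vartheta^\mu(\cdot))\}$ is bounded in $L^2(J\times\Omega,\mathbb{H})$ and $\{g(\cdot,\vartheta^\mu(\cdot))\}$ is bounded in $L^2(J\times\Omega,L_2^0)$, so a sequence $\mu_k\to0$ exists along which they converge weakly to some $f^0$ and $g^0$. The maps $q\mapsto\int_0^c\Re(c-s)q(s)\mathrm{d}s$ and $q\mapsto\int_0^c\Re(c-s)q(s)\mathrm{d}\mathsf{W}(s)$ should be compact. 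To see this, split off $[c-\alpha,c]$, whose contribution is $O(\alpha)$ uniformly by the bound $N$ and Lemma \ref{lemma21}, and factor $\Re(\alpha)$ on the remaining part using Lemma \ref{lemma28}, exactly as in Step \ref{step363} of Theorem \ref{theorem36}. Along a subsequence the $f$- and $g$-integrals then converge strongly in $L^2(\Omega,\mathbb{H})$ to the corresponding integrals of $f^0$ and $g^0$. For the nonlocal term, $h(\vartheta^\mu)=\int_0^c\zeta(s,\vartheta^\mu(s))\mathrm{d}s$ is bounded by \textbf{(H3)} because $\vartheta^\mu\in B_r$, and $\Re(c)$ is compact, so $\Re(c)h(\vartheta^\mu_k)$ also has a strongly convergent subsequence. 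A diagonal argument produces one sequence $\mu_k$ with $p(\vartheta^{\mu_k})\to p^0$ strongly in $L^2(\Omega,\mathbb{H})$.

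\textbf{Step 3: conclusion.} Using $\|\mu S(\mu,\Delta_0^c)\|\le1$,
$$\mathbb{E}\|\vartheta^{\mu_k}(c)-\tilde{\vartheta}_c\|^2\le2\,\mathbb{E}\|\mu_kS(\mu_k,\Delta_0^c)p^0\|^2+2\,\mathbb{E}\|p(\vartheta^{\mu_k})-p^0\|^2.$$
The second term tends to $0$ by Step 2. The first tends to $0$ by \textbf{(H6)} and Remark \ref{rem41}, with dominated convergence in $\omega$ justified by the pointwise bound $\|p^0\|$. Hence $\overline{\mathcal{R}(c)}$ is dense, which is the claimed approximate controllability.

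\textbf{Main obstacle.} I expect the difficulty to be in Step 2. One issue is showing that the stochastic convolution operator is compact from $L^2(J\times\Omega,L_2^0)$ to $L^2(\Omega,\mathbb{H})$; compactness of $\Re(\alpha)$ acts pathwise, but the $L^2(\Omega)$ setting needs care, and I would first try the $\Re(\alpha)$-factorization combined with tightness. The other issue is that $r$ in \eqref{equa28} depends on $\mu$ through $K_u\sim\mu^{-2}$, so $B_r$ is not uniform in $\mu$. This is exactly what \textbf{(H5)} is for: the $f,g$ terms are bounded independently of $\mu$. The nonlocal term would still need $\Omega_\zeta$ to be bounded, or a separate uniform estimate, and I would try to extract that from \eqref{equa28}.
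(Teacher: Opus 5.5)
Your route is the paper's route. Your identity $\vartheta^\mu(c)-\tilde{\vartheta}_c=-\mu S(\mu,\Delta_0^c)p(\vartheta^\mu)$ is its formula \eqref{exp38}, and you then use weak limits of $f,g$ from \textbf{(H5)}, compactness of $\Re$, and Remark~\ref{rem41}, as the paper does. However, the two points you call the ``main obstacle'' are genuine gaps. The paper does not close them either: it only invokes ``compactness of $\Re(\sigma)$ and dominated convergence'', and it treats $h(\vartheta^\mu)$ as if it did not depend on $\mu$.

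First, the compactness claim in Step 2 is false. The map $q\mapsto\int_0^c\Re(c-s)q(s)\,\mathrm{d}s$ is not compact from $L^2(J\times\Omega,\mathbb{H})$ to $L^2(\Omega,\mathbb{H})$, because $\Re$ acts only on the $\mathbb{H}$-variable and does nothing in the $\Omega$-direction. For example, take $q_k(s)=\xi_k\mathbf{1}_{[s_0,c]}(s)x_0$, where the $\xi_k$ are independent symmetric $\pm1$ variables measurable with respect to $\mathcal{F}_{s_0}$. Then $q_k$ is bounded, adapted, and $q_k\rightharpoonup0$. Yet $\mathbb{E}\|\int_0^c\Re(c-s)q_k(s)\,\mathrm{d}s\|^2=\|\int_{s_0}^c\Re(c-s)x_0\,\mathrm{d}s\|^2$ is independent of $k$, and it is nonzero for suitable $x_0$. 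The same objection applies to the It\^o convolution and to $\Re(c)h(\vartheta^{\mu_k})$. So the strong convergence $p(\vartheta^{\mu_k})\to p^0$ is unjustified.

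The repair is to drop weak limits altogether and put the compactness on the operator side. Since $\|\mu S(\mu,\Delta_0^c)\|\le1$ and $\mu S(\mu,\Delta_0^c)\to0$ strongly, you get $\|\mu S(\mu,\Delta_0^c)\Re(\alpha)\|\to0$ in operator norm for each fixed $\alpha>0$. Your own $\Re(\alpha)$-splitting via Lemma~\ref{lemma28} writes each convolution as $\Re(\alpha)Y^\mu+E^\mu$. By \textbf{(H5)} and Lemma~\ref{lemma21}, $\mathbb{E}\|Y^\mu\|^2\le CN^2$ and $\mathbb{E}\|E^\mu\|^2\le C\alpha N^2$ uniformly in $\mu$. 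Hence $\limsup_{\mu\to0}\mathbb{E}\|\mu S(\mu,\Delta_0^c)(\Re(\alpha)Y^\mu+E^\mu)\|^2\le 2C\alpha N^2$. Letting $\alpha\to0$ finishes the argument for the whole family $\mu\to0$, with no subsequences needed.

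Second, the nonlocal term cannot be controlled under the stated hypotheses. Since $K_u\ge 8M_C^2M^2\mathbb{E}\|\tilde{\vartheta}_c\|^2/\mu^2$, condition \eqref{equa28} forces $r=r(\mu)\ge C\mu^{-2}$ whenever $\tilde{\vartheta}_c\neq0$. So all you know is $\mathbb{E}\|h(\vartheta^\mu)\|^2\le c\|\tau_\zeta\|_{L^1}\Omega_\zeta(r(\mu))$. Condition \eqref{equa28} itself only yields $\Omega_\zeta(r)=O(r)$, so no uniform bound can be extracted from it, contrary to your hope. The best available estimate is $\mathbb{E}\|\mu S(\mu,\Delta_0^c)\Re(c)h(\vartheta^\mu)\|^2\le\|\mu S(\mu,\Delta_0^c)\Re(c)\|^2\,c\|\tau_\zeta\|_{L^1}\Omega_\zeta(r(\mu))$. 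This is the product of a factor tending to $0$ at an unknown rate and a factor that may blow up.

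Since \textbf{(H5)} says nothing about $\zeta$, an additional hypothesis is required. Examples are $\zeta$ uniformly bounded (true in the paper's example $\zeta=2\sigma^2\cos(\cdot)$) or $\Omega_\zeta$ bounded. With such a hypothesis, $\mathbb{E}\|h(\vartheta^\mu)\|^2$ is bounded uniformly in $\mu$, and $\|\mu S(\mu,\Delta_0^c)\Re(c)\|\to0$ disposes of this term as well.
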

\begin{proof}
   Assume that $\vartheta^\mu$ is a mild solution of \eqref{sys} in $B_r$. The stochastic Fubini theorem makes it simple to establish (at $t=T$)that
   \begin{align}
       \begin{array}{ll}\label{exp38}
           \vartheta^\mu(c) & = \tilde{\vartheta}_c -\mu(\mu I + \Delta_s^c)^{-1}[\mathbb{E}\tilde{\vartheta}_c - \Re(c)h(\vartheta^\mu)] - \displaystyle\int_0^c\mu(\mu I + \Delta_s^c)^{-1}\phi(s)\mathrm{d}\mathsf{W}(s) \\
           &\\
            &- \displaystyle\int_0^c\mu(\mu I + \Delta_0^c)^{-1}\Re(c-s)f(s,\vartheta^\mu(s))\mathrm{d}s - \int_0^c\mu(\mu I + \Delta_s^c)^{-1}\Re(c-s)f(s,\vartheta^\mu(s))\mathrm{d}\mathsf{W}(s).
       \end{array}
   \end{align}
   So, from the hypothesis \textbf{(H5)} there is a set $B_r$ such that 
   $$
   \|f(\sigma, \vartheta^\mu(\sigma))\|^2 + \|g(\sigma, \vartheta^\mu(\sigma))\|^2\leq M
   $$
   in $[0, c]\times\Omega$. Thus, there is a sub-sequence $\{f(\sigma, \vartheta^\mu(\sigma)), g(\sigma, \vartheta^\mu(\sigma))\}$, still denoted by\\ $\{f(\sigma, \vartheta^\mu(\sigma)), g(\sigma, \vartheta^\mu(\sigma))\}$, weakly convergent to, say, $\{f(\sigma), g(\sigma)\}$ in $\mathbb{H}\times L_2^0$.\\ By the notation \eqref{exp38}, we have 
  \allowdisplaybreaks
  \begin{align*}
          \mathbb{E}||\vartheta^\mu(c) - v_c||^2 &\leq 6\mathbb{E}||\mu(\mu I + \Delta_0^c)^{-1}[\mathbb{E}v_c - \Re(c)g(\vartheta^\mu)]||^2 + 6Tr(Q)\displaystyle\displaystyle\int_0^c\mathbb{E}||\mu(\mu I + \Delta_0^c)^{-1}\phi(s)||_{L_2^0}^2\mathrm{d}s \\
          &\\
           & + 6\Big(\displaystyle\int_0^c\mathbb{E}\|\mu(\mu I + \Delta_0^c)^{-1}\Re(c-s)[f(s, \vartheta^\mu(s)) - f(s)]\|\mathrm{d}s\Big)^2\\
           &\\
           & + 6\Big(\displaystyle\int_0^c\mathbb{E}\|\mu(\mu I + \Delta_0^c)^{-1}\Re(c-s)f(s)\|\mathrm{d}s\Big)^2\\
           &\\
           & + 6 Tr(\mathcal{Q})\displaystyle\int_0^c\mathbb{E}\|\mu(\mu I + \Delta_0^c)^{-1}\Re(c-s)[g(s, \vartheta^\mu(s)) - g(s)]\|^2\mathrm{d}s\\
           &\\
           & + 6 Tr(\mathcal{Q})\displaystyle\int_0^c\mathbb{E}\|\mu(\mu I + \Delta_0^c)^{-1}\Re(c-s)g(s)\|^2\mathrm{d}s.
  \end{align*}
  On the other side, according to Remark \eqref{rem41}, for all $0\leq \sigma\leq c$, we have $\mu(\mu I + \Delta_\sigma^c)^{-1}\to 0$ strongly as $\mu\to 0$, along with $||\mu(\mu I + \Delta_s^c)^{-1}||\leq 1$. Given that $\Re(\sigma), \sigma >0$ is compact and by the \textbf{Lebesgue} dominated convergence theorem, we have $\mathbb{E}||\vartheta^\mu(c) - \vartheta_c||^2\to 0$ as $\mu\to 0$. In light of this, the stochastic control system \eqref{sys} is approximately controllable on $J$.\\
   This ends the proof of this theorem.
\end{proof}
\begin{remark}
The approach employed in this article can be expanded to the investigation of the approximate controllability of a deterministic system described by,
	\begin{equation}
		\begin{cases}
			\vartheta(\sigma) = A\vartheta(\sigma) + \displaystyle\int_0^\sigma \Pi(\sigma-s)\vartheta(s)\mathrm{d}s + f_\gamma(\sigma, \vartheta(\sigma)) + \mathsf{C}u(\sigma), \sigma\in [0,c]\\
			\vartheta(0) = \displaystyle\int_0^\sigma\zeta(s, \vartheta(s))\mathrm{d}s,
		\end{cases}
	\end{equation}
	with a right choice of an abstract space $C(J, \mathbb{H})$ endowed with the norm $||\vartheta||_{C(J, \mathbb{H})}=\max_{\sigma\in J}||\vartheta(\sigma)||_\mathbb{H}$.
\end{remark}
\section{Example}\label{sec4}
The following stochastic control system is taken into consideration to highlight the major finding.
\begin{equation}\label{app}
	\begin{cases}
		\displaystyle\frac{\partial}{\partial \sigma}\omega(\sigma,x) =  \frac{\partial^2 \omega(\sigma, x)}{\partial x^2} + \displaystyle\int_0^\sigma \theta(\sigma -s)\frac{\partial^2 w(s,x)}{\partial x^2}\omega(s)\mathrm{d}s + f(\sigma, \omega(\sigma))(x) + g(\sigma, \omega(\sigma, x)\frac{\mathrm{d}\mathsf{W}(\sigma)}{\mathrm{d}\sigma}\\  \qquad \qquad \qquad +  u(x,\sigma), \sigma\in [0,c], x\in [0,1]\\
		\omega(\sigma, 0) =  
		\omega(\sigma, 1)=0,\ \sigma\in [0,c],\\
		\omega(0, x) = \displaystyle\int_0^1\Theta(s,\omega(x, s) )\mathrm{d}s,\ x\in [0, 1], 
		%\displaystyle\sum_{i=1}^n\int_0^1\Theta(x, s)\sin{\Big(\frac{v(\sigma_i, x)}{\sigma_i}\Big)}\mathrm{d}x, x\in [0,1],
	\end{cases}
\end{equation}
where $\mathsf{W}(\sigma)$ denotes the one-dimensional Brownian motion defined on the filtered probability space $(\Omega, \mathcal{F}, \{\mathcal{F}_\sigma\}_{\sigma\geq 0}, \mathbb{P})$, and $\Theta(\sigma, \omega)\in [0, c]\times L^2([0, 1])$. To write the system \eqref{app} into its abstract form like \eqref{sys}, we consider the space $\mathbb{H}=\X=\mathbb{K}= L^2([0,1])$ with the norm $\|\cdot\|$. Define the operator $A:\mathcal{D}(A)\subset \mathbb{H}\to \mathbb{H}$ by $A\omega = \omega^{\prime\prime}, \omega\in \mathcal{D}(A),$  with domain \\ 
		$$\mathcal{D}(A) =\bigg\{\omega \in\mathbb{H}\ |\ \omega, \frac{\partial \omega}{\partial x}\ \text{are absolutely continuous}, \frac{\partial^2 \omega}{\partial x^2}\in\mathbb{H},\ \omega(0)=\omega(1)=0 \bigg\}.$$
It is well known that $A$ generates a compact semigroup $T(\sigma), \sigma\geq 0$ in $\mathbb{H}$, which is supposed to be compact. This implies that the hypothesis \textbf{(R1)} is satisfied. Furthermore, $A$ has a discrete spectrum, and its eigenvalues are  $-n^2, n\in\mathbb{N}$ with
the corresponding normalized eigenvectors $e_n(x)=\sqrt{\frac{2}{\pi}}\sin(nx), 0\leq x\leq\pi, n=1,2,\cdots.$ Then
$$
A\omega = -\displaystyle\sum_{n=1}^\infty n^2\langle \omega, e_n\rangle e_n,\ \omega\in D(A), 
$$
with the associated semigroup defined by 
$$
    T(\sigma)\omega = \sum_{n=1}^{\infty}e^{-n^2\sigma}\left\langle \omega, e_n\right\rangle e_n,\quad \omega\in X.
$$
Let us define the operator $\Pi(\sigma): \mathcal{D}(A)\subset \mathbb{H}\to \mathbb{H}$ by 
$$\Pi(\sigma)\omega = \theta(\sigma)A\omega,\ \text{for}\ \sigma\geq 0\ \text{and}\ \omega\in \mathcal{D}(A).$$
Also, consider the infinite-dimensional space $\mathbb{K}$ by
$$
\mathbb{K}=\{u|u = \sum_{n=2}^\infty u_n w_n,\} \text{with}\ \sum_{n=2}^\infty u_n^2 <\infty, \quad \text{for all}\ w\in\mathbb{H}.
$$
In the space $\mathbb{K}$, the norm is defined by $\|u\|_\mathbb{K}^2=\sum_n^\infty u_n^2$. Now, the  continuous linear operator $\mathsf{C}:\mathbb{K}\to \mathbb{H}$ is defined by $\mathsf{C}u = 2u_2w_1 + \sum_{n=2}^\infty u_n w_n$.\\
\noindent Moreover, for any $\sigma\in J$, let 
\begin{align*}
     &\vartheta(\sigma)(x) = \omega(\sigma, x),\ \mathsf{C}u(\sigma)(x)=u(\sigma, x), \quad \text{for}\ \sigma\in [0, c]\ \text{and}\ x\in [0, 1]\\
     &f(\sigma, \vartheta(\sigma))(x)=\frac{\sigma \vartheta(x,\sigma)}{2(1+\vartheta^2(\sigma, x))},\quad g(\sigma, \vartheta(\sigma))(x) = \frac{1}{1 + e^\sigma}\frac{\vartheta(\sigma, x)}{1+\vartheta^2(\sigma, x)}\ \text{ and }\\
     &\zeta(\sigma,\vartheta(\sigma))(x) = \Theta(\sigma, \vartheta)=2\sigma^2\cos{(\frac{\vartheta(z, \sigma)}{\sigma})}.
     %h(v)&=\sum_{i=1}^n\Xi_h v(\sigma_i)\ \text{where}\ \Xi_h (v)(z) = \int_0^1\Theta(x, z)\sin(\frac{v(z, \sigma)}{\sigma})\mathrm{d}z\quad \text{for}\ x\in [0, 1].
\end{align*}
So, the system \textcolor{blue}{\eqref{app}} can be rewritten into the abstract form as the initial system \textcolor{blue}{\eqref{sys}}. Consequently, if $\theta:\mathbb{R}_+\to\mathbb{R}_+^*$ is $C^1$ function such that  $\theta$ and $\theta^\prime$ are bounded and uniformly continuous, we conclude that \textbf{(R2)} is satisfied.\\
Furthermore, 
\begin{align*}
     ||f(\sigma, \vartheta(\sigma))||^2 &\le \int_0^1\|\frac{\sigma \vartheta(x, \sigma)}{2(1 + \vartheta^2(x, \sigma))}\|^2\mathrm{d}x \\
     &\le \frac{\sigma^2}{4}\int_0^1\bigg\Vert \vartheta(x, \sigma)\bigg\Vert^2\mathrm{d}x\\
      &\le \frac{\sigma^2}{4}\|^2\vartheta(\sigma)\|^2.
\end{align*}
So, for all $\sigma\in [0, c], \vartheta\in\mathbb{H}$, $$
\mathbb{E}||f(\sigma,\vartheta)||^2\leq \frac{\sigma^2}{4}\mathbb{E}||\vartheta(\sigma)||^2.
$$ 
Further, 
\begin{align*}
    \mathbb{E}||g(\sigma,\vartheta)||^2&\leq \mathbb{E}\int_0^1\bigg\Vert\frac{1}{(1 + e^\sigma)}\frac{\vartheta(x,\sigma)}{(1 + \vartheta^2(x, \sigma))}\bigg\Vert^2\mathrm{d}x\\
    &\leq \bigg\Vert\frac{1}{(1 + e^\sigma)}\bigg\Vert^2 \mathbb{E}\int_0^1\bigg\Vert\frac{\vartheta(x,\sigma)}{(1 + \vartheta^2(x, \sigma))}\bigg\Vert^2\mathrm{d}x\\
    &\leq\frac{1}{4}\int_0^1\mathbb{E}\|\vartheta(x, \sigma)\|^2\mathrm{d}x\\
    &\leq \frac{1}{4}\mathbb{E}||\vartheta(\sigma)||^2,
\end{align*}
and
\begin{align*}
    \mathbb{E}||\zeta(\sigma,\vartheta)||^2 &\leq\int_0^1\mathbb{E}\|\sigma^2\sin\bigg(\frac{\vartheta(x, \sigma)}{\sigma}\bigg)\|^2\mathrm{d}x\\
    &\le \sigma^2\int_0^1\mathbb{E}\|\vartheta(x, \sigma)\|^2\mathrm{d}x\\
    & \leq \sigma^2\mathbb{E}||\vartheta(\sigma)||^2,
\end{align*}
which implies that, the \textbf{(H1)-(H3)} hold good with $\tau_f(\sigma)=\frac{\sigma^2}{4},\, \tau_g(\sigma)=\frac{1}{4},\, \tau_\zeta(\sigma)=\sigma^2$ and $\phi_f(\sigma)=\phi_g(\sigma)=\phi_\zeta(\sigma)=\sigma$. Hence, by Theorem \ref{theo31}, system \eqref{app} has a mild solution provided that \eqref{equa28} holds.\\
We use the following lemma to establish the approximate controllability of \eqref{app}. 
\begin{lemma}\cite{mokkedem2014approximate}\label{approxth}
    Consider $\theta(\sigma)\in L^1(\mathbb{R}^+)\cap C^1(\mathbb{R}^+)$ with primitive $\Pi(\sigma)\in L_{loc}^1(\mathbb{R}^+)$ such that $\Gamma(\sigma)$ is non-positive, non-decreasing and $\Pi(0)=-1$. If the operator $A$ is self-adjoint and positive semi-definite, the resolvent operator $\hat{\Re}(\sigma)$ associated with the system (\ref{cauchy}) is self-adjoint as well.
\end{lemma}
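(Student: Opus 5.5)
The plan is to compute the Laplace transform of $\hat{\Re}(\cdot)$, show that for real values of the spectral parameter it is a self-adjoint operator, and then use uniqueness of the Laplace transform to transfer self-adjointness back to $\hat{\Re}(\sigma)$ for every $\sigma\geq 0$. Here $\Pi(\sigma)=\theta(\sigma)A$ as in the example, and the primitive of $\theta$ is denoted $\Gamma$. I also read the sign hypothesis on $A$ so that $A$ is dissipative, i.e. $-A$ is positive semi-definite, as for the Dirichlet Laplacian above.

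\textbf{Step 1 (Laplace transform).} By Definition \ref{def21}(a), $\|\hat{\Re}(\sigma)\|\leq Me^{\beta\sigma}$, so $\widehat{\Re}(\lambda)x=\int_0^\infty e^{-\lambda\sigma}\hat{\Re}(\sigma)x\,\mathrm{d}\sigma$ exists for $\operatorname{Re}\lambda>\beta$. For $x\in Y$, I will take Laplace transforms in the identity of Definition \ref{def21}(c), $\hat{\Re}'(\sigma)x=\hat{\Re}(\sigma)Ax+\int_0^\sigma\theta(\sigma-s)\hat{\Re}(s)Ax\,\mathrm{d}s$. Using $\hat{\Re}(0)=I$ and the convolution theorem, this gives
\begin{equation*}
\lambda\widehat{\Re}(\lambda)x-x=(1+\hat{\theta}(\lambda))\widehat{\Re}(\lambda)Ax .
\end{equation*}
The other form of (c) gives the same relation with $A$ acting on the left. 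Provided $1+\hat{\theta}(\lambda)\neq 0$, this means
\begin{equation*}
\widehat{\Re}(\lambda)=\big(\lambda-(1+\hat{\theta}(\lambda))A\big)^{-1}=\frac{1}{1+\hat{\theta}(\lambda)}\Big(\frac{\lambda}{1+\hat{\theta}(\lambda)}-A\Big)^{-1}.
\end{equation*}
Closedness of $A$ and density of $Y$ extend this from $Y$ to all of $\mathbb{H}$.

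\textbf{Step 2 (sign of $1+\hat{\theta}$, the main obstacle).} The hypotheses on $\Gamma$ should be exactly what makes the displayed inverse exist for all large real $\lambda$. I will check this as follows.
\begin{itemize}
\item Since $\Gamma'=\theta$, we have $\hat{\theta}(\lambda)=\lambda\hat{\Gamma}(\lambda)-\Gamma(0)=\lambda\hat{\Gamma}(\lambda)+1$.
\item Because $\Gamma$ is non-decreasing with $\Gamma(0)=-1$ and non-positive, $-1\leq\Gamma\leq 0$.
\item Hence $\lambda\hat{\Gamma}(\lambda)=\int_0^\infty\lambda e^{-\lambda t}\Gamma(t)\,\mathrm{d}t\in[-1,0]$ for real $\lambda>0$.
\item Therefore $1+\hat{\theta}(\lambda)\in[1,2]$ is real and strictly positive.
\end{itemize}
Consequently $\lambda/(1+\hat{\theta}(\lambda))>0$ for real $\lambda>\max(\beta,0)$. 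Such a number lies in the resolvent set of the self-adjoint dissipative operator $A$, so the inverse exists and is bounded. It is also self-adjoint, because the resolvent of a self-adjoint operator at a real point is self-adjoint and the scalar prefactor is real. If the paper's sign convention for $\Gamma$ turns out to be the opposite one, I would redo this computation with that convention. The essential point is only that $1+\hat{\theta}(\lambda)$ is real and positive on a real half-line.

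\textbf{Step 3 (inversion).} Fix $x,y\in\mathbb{H}$. The functions $\sigma\mapsto\langle\hat{\Re}(\sigma)x,y\rangle$ and $\sigma\mapsto\langle x,\hat{\Re}(\sigma)y\rangle$ are continuous by Definition \ref{def21}(b) and exponentially bounded. By Step 2, their Laplace transforms coincide for all real $\lambda$ large enough, since $\langle\widehat{\Re}(\lambda)x,y\rangle=\langle x,\widehat{\Re}(\lambda)y\rangle$ there. By Lerch's uniqueness theorem the two functions agree for every $\sigma\geq 0$. Hence $\hat{\Re}(\sigma)^*=\hat{\Re}(\sigma)$, which is the claim.
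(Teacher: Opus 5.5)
Your proof is correct. The paper gives no argument for this lemma: it imports it from \cite{mokkedem2014approximate} and uses it as a black box in the example. Your Laplace-transform route is therefore a self-contained replacement rather than a reconstruction, and each step holds. Transforming the second identity of Definition \ref{def21}(c) gives $\widehat{\Re}(\lambda)\bigl(\lambda-(1+\hat\theta(\lambda))A\bigr)x=x$ on $Y$. For real $\lambda$ this identifies $\widehat{\Re}(\lambda)$ with the inverse of a self-adjoint operator. Lerch's theorem, applied to $\sigma\mapsto\langle\hat{\Re}(\sigma)x,y\rangle-\langle x,\hat{\Re}(\sigma)y\rangle$, then pulls self-adjointness back to each $\hat{\Re}(\sigma)$.

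Two small economies are available. Once Step 2 gives $\lambda/(1+\hat\theta(\lambda))\in\rho(A)$, the left-inverse identity alone forces $\widehat{\Re}(\lambda)=\bigl(\lambda-(1+\hat\theta(\lambda))A\bigr)^{-1}$. You then need not justify pulling $A$ through the Laplace integral in the other form of (c). Conversely, if you keep both forms, Step 2 is dispensable.

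This is also what your approach buys over citing the lemma: it shows the hypotheses on $\Gamma$ play a smaller role than your Step 2 suggests. Your remark that they are ``exactly what makes the inverse exist'' overstates things. For any real $\theta\in L^1(\mathbb{R}^+)$, dominated convergence gives $\hat\theta(\lambda)\to0$ as $\lambda\to+\infty$. Hence $1+\hat\theta(\lambda)$ is eventually positive, and $\lambda/(1+\hat\theta(\lambda))$ eventually exceeds $\sup\sigma(A)$. Self-adjointness of $\hat{\Re}(\sigma)$ thus uses only that $\theta$ is real-valued and $A$ is self-adjoint; the sign and monotonicity conditions on the primitive are irrelevant to this conclusion.

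Finally, your reading of ``positive semi-definite'' as $-A\geq 0$ is forced, not merely convenient. Under the paper's standing assumption \textbf{(R1)}, a self-adjoint generator of a uniformly bounded semigroup has $\sigma(A)\subset(-\infty,0]$, so the literal reading would give $A=0$.
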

From Lemma \ref{approxth} above, the resolvent operator $\hat{\Re}(\sigma)$ of (\ref{app}) if self-adjoint. So it follows, $$\mathsf{C}^*\hat{\Re}^*(\sigma)\vartheta=\hat{\Re}(\sigma)\vartheta,\ \text{for any}\ \vartheta\in \varSigma.
$$
Let $\mathsf{C}^*\Re^*(\sigma)\vartheta = 0,$ for all $\sigma\in [0, c]$, then $\hat{\Re}(\sigma)\vartheta=0, \forall \sigma\in [0, c]$. Since $\hat{\Re}(0)=I$ for $l=0$, we get $\vartheta=0$. So from \cite{curtain2012introduction}(Theorem 4.1.7), it follows that the linear control system corresponding to (\ref{app}) is approximately controllable on $J$, which means that the Lemma \ref{lemma33} is satisfied. Therefore, by Theorems \ref{theo31}  and \ref{theo42}, the integrodifferential equation (\ref{app}) is approximately controllable on $J$.
%It can be easily seen that the deterministic linear integrodifferential corresponding to \eqref{application} is approximately controllable on $[0,1]$. 
%Additionally, if all of Theorem \ref{theo42}'s prerequisites are met, we can use this Theorem to determine that the stochastic control system \eqref{application} is approximately controllable.
\section{Conclusion}
In this article, we discuss the approximate controllability of a class of nonlinear stochastic integrodifferential equations with nonlocal initial conditions in a Hilbert space. By dropping the compactness conditions and the Lipschitz condition on the nonlocal term, we use a weaker growth condition on the nonlocal term to establish our result. Furthermore, the study of approximate controllability instead of the exact one is due to the fact that the resolvent operator related to the linear part of the main system is compact.  \\
This work can further be extended to a second order (fractional) system to study the strongest notion of controllability, called trajectory controllability, by relaxing the compactness assumption on the resolvent operators/semigroups in the form of sine and cosine operators.

{\bf Declarations:}\\
{\bf Ethical Approval:} No human and/or animal studies are involved in this research.\\
{\bf Funding:} Authors do not have any financial assistance or funding to support this research.
%\bibliographystyle{plain}
%\bibliography{biblio}

\end{document}